\documentclass[a4paper,10pt]{amsart}

\usepackage[T1]{fontenc} \usepackage[utf8]{inputenc}
\usepackage{lmodern}

\usepackage{enumerate}



\usepackage{amsfonts,amsmath,amstext,amsbsy,amssymb}
\usepackage{amsbsy,amsopn,amsthm,amscd,amsxtra}

\usepackage{latexsym,mathrsfs}

\usepackage{mathabx} \newcommand{\carr}{\righttoleftarrow}

\usepackage{hyperref}
\RequirePackage[dvipsnames]{xcolor} 
\definecolor{halfgray}{gray}{0.55} 
\definecolor{webgreen}{rgb}{0,0.5,0}
\definecolor{webbrown}{rgb}{.6,0,0} \hypersetup{%
  colorlinks=true, linktocpage=true, pdfstartpage=3,
  pdfstartview=FitV,%
  breaklinks=true, pdfpagemode=UseNone, pageanchor=true,
  pdfpagemode=UseOutlines,%
  plainpages=false, bookmarksnumbered, bookmarksopen=true,
  bookmarksopenlevel=1,%
  hypertexnames=true,
  pdfhighlight=/O,
  urlcolor=webbrown, linkcolor=RoyalBlue,
  citecolor=webgreen, 
  pdftitle={Rotational deviations},%
  pdfauthor={Alejandro Kocsard},%
  pdfsubject={2000 MAthematical Subject Classification: Primary:},%
  pdfkeywords={},%
  pdfcreator={pdfLaTeX},%
  pdfproducer={LaTeX with hyperref}%
}





\newcommand{\abs}[1]{\left\lvert{#1}\right\rvert}
\newcommand{\norm}[1]{\left\|{#1}\right\|}

\newcommand{\scprod}[2]{\left\langle{#1},{#2}\right\rangle}

\newcommand{\bb}{\mathbb} 
\newcommand{\mc}{\mathcal} 

\newcommand{\R}{\mathbb{R}}\newcommand{\N}{\mathbb{N}}
\newcommand{\Z}{\mathbb{Z}}\newcommand{\Q}{\mathbb{Q}}
\newcommand{\T}{\mathbb{T}}
\newcommand{\A}{\mathbb{A}}
\newcommand{\Ss}{\mathbb{S}}

 \newcommand{\ie}{i.e.\ }

\newtheorem{theorem}{Theorem}[section]
\newtheorem{proposition}[theorem]{Proposition}

\newtheorem{corollary}[theorem]{Corollary}

\theoremstyle{definition}

\theoremstyle{remark} \newtheorem{remark}[theorem]{Remark}

\newtheorem*{theorem*}{Theorem}



\newcommand{\Homeo}[1]{\mathrm{Homeo}_{#1}}

 \newcommand{\dd}{\:\mathrm{d}}

\DeclareMathOperator{\M}{\mathfrak{M}} 
\DeclareMathOperator{\Per}{Per} \DeclareMathOperator{\Fix}{Fix}



\newcommand{\Thomeo}{\widetilde{\mathrm{Homeo}_0}}

\newcommand{\wh}{\widehat}
\newcommand{\I}{\mathscr{I}}

\DeclareMathOperator{\Ad}{Ad} 
\DeclareMathOperator{\inter}{int}

\newcommand{\F}{\mathscr{F}} \newcommand{\cc}{\mathrm{cc}}
\newcommand{\pr}[1]{\mathrm{pr}_{#1}} 


\title[Rotational deviations]{Rotational deviations and invariant
  pseudo-foliations for periodic point free torus homeomorphisms}

\author[A. Kocsard]{Alejandro Kocsard}

\email{akocsard@id.uff.br}

\address{IME - Universidade Federal Fluminense. Rua Prof. Marcos
  Waldemar de Freitas Reis, S/N. Bloco H, $4^\circ$
  andar. 24.210-201, Gragoatá, Niterói, RJ, Brasil}

\author[F. Rodrigues]{Fernanda Pereira Rodrigues}

\email{fernandapr@id.uff.br}

\address{IME - Universidade Federal Fluminense. Rua Prof. Marcos
  Waldemar de Freitas Reis, S/N. Bloco H, $4^\circ$
  andar. 24.210-201, Gragoatá, Niterói, RJ, Brasil}

\date{\today}


\begin{document}

\begin{abstract}
  This article deals with directional rotational deviations for
  non-wandering periodic point free homeomorphisms of the $2$-torus
  which are homotopic to the identity. We prove that under mild
  assumptions, such a homeomorphism exhibits uniformly bounded
  rotational deviations in some direction if and only if it leaves
  invariant a \emph{pseudo-foliation,} a notion which is a slight
  generalization of classical one-dimensional foliations.

  To get these results, we introduce a novel object called
  \emph{$\tilde\rho$-centralized skew-product} and their associated
  \emph{stable sets at infinity.}
\end{abstract}

\maketitle

\section{Introduction}
\label{sec:intro}

We denote by $\Homeo0(\T^d)$ the space of homeomorphisms of the
$d$-dimensional torus $\T^d$ which are homotopic to the identity. The
main dynamical invariant for systems given by such a map is the so
called \emph{rotation set.} Given a lift $\tilde f\colon\R^d\carr$ of
a homeomorphism $f\in\Homeo0(\T^d)$, one defines its \emph{rotation
  set} by
\begin{displaymath}
  \rho(\tilde f):=\left\{\rho\in\R^d : \exists n_i\uparrow+\infty,\
    z_i\in\R^d, \ \frac{\tilde f^{n_i}(z_i)-z_i}{n_i}\to\rho,\
    \text{as } i\to\infty\right\}.
\end{displaymath}

This invariant was originally defined by Poincaré for the
one-dimensional case (\ie $d=1$) in his celebrated work
\cite{Poincare1880memoire}. In such a case, the rotation set reduces
to a point, the so called \emph{rotation number}, and a simple but
fundamental property holds (see for instance \cite[page
21]{HermanSurLaConj}):
\begin{equation}
  \label{eq:bounded-dev-d1}
  \abs{\tilde f^n(z)-z -n\rho(\tilde f)}\leq 1,\quad\forall n\in\Z,
  \forall z\in\R.
\end{equation}
That is, every $\tilde f$-orbit exhibits \emph{uniformly bounded
  rotational deviations} with respect to the rigid rotation (or
translation) $z\mapsto z+\rho(\tilde f)$.

In higher dimensions the situation dramatically changes: in fact, for
$d\geq 2$ different orbits can exhibit different rotation vectors and,
moreover, there can exist points with non-well-defined rotation
vector. However, the rotation set $\rho(\tilde f)$ is always
non-empty, compact and connected; and for $d=2$, Misiurewicz and
Ziemian~\cite{MisiurewiczZiemian} showed it is also convex. So, in the
two-dimensional case the elements of $\Homeo0(\T^2)$ can be classified
according to the following trichotomy:
\begin{enumerate}[(i)]
\item \label{enum:pseudo-rot} $\rho(\tilde f)$ is just a point, and in
  this case we say $f$ is a \emph{pseudo-rotation;}
\item \label{enum:segment} $\rho(\tilde f)$ is a non-degenerate line
  segment;
\item \label{enum:fat} $\rho(\tilde f)$ has non-empty interior.
\end{enumerate}

In this paper we shall concentrate on the study of \emph{rotational
  deviations} in dimension two. In this case, as a generalization of
\eqref{eq:bounded-dev-d1}, one says that $f$ exhibits \emph{uniformly
  bounded rotational deviations} when
\begin{displaymath}
  \sup_{z\in\R^2} \sup_{n\in\Z} d\big(\tilde f^n(\tilde z) - \tilde z,
  n\rho(\tilde f)\big) <\infty.
\end{displaymath}

When the rotation set $\rho(\tilde f)$ has non-empty interior Le
Calvez and Tal~\cite{LeCalvezTalForcingTheory} has recently shown that
$f$ exhibits uniformly bounded rotational deviations. This result had
been previously gotten by
Addas-Zanata~\cite{Addas-ZanataUnifBoundDiff} for smooth
diffeomorphisms, and Dávalos~\cite{DavalosAnnularMapsTorus} in some
particular cases.

When the rotation set $\rho(\tilde f)$ has empty interior, \ie it
satisfies either condition \eqref{enum:pseudo-rot} or
\eqref{enum:segment} of above trichotomy, one can consider
\emph{directional rotational deviations:} if $v\in\Ss^1$ denotes a
unit vector such that $\rho(\tilde f)$ is contained in a straight line
perpendicular to $v$, $f$ is said to exhibit \emph{uniformly bounded
  $v$-deviations} when there exists a constant $M>0$ such that
\begin{displaymath}
  \scprod{\tilde f^n(z)-z -  n\rho}{v}\leq M, \quad\forall z\in\R^2,\
  \forall n\in\Z,
\end{displaymath}
for some (and hence, any) $\rho\in\rho(\tilde f)$.

When $f$ is a pseudo-rotation, \ie $\rho(\tilde f)$ is just a point,
one can study rotational deviations in any direction. Koropecki and
Tal has shown in \cite{KoroTalBoundUnbound} that ``generic''
area-preserving rational pseudo-rotations, \ie satisfying
$\rho(\tilde f)\subset\Q^2$, exhibit uniformly bounded deviations in
every direction (see also \cite{LeCalvezTalForcingTheory}), but there
are some ``exotic'' rational pseudo-rotations with unbounded
deviations in every direction
\cite{KoropeckiTalAreaPrsIrrotDiff}. Contrasting with these results,
in \cite{KocKorFoliations} the first author and Koropecki proved that
generic diffeomorphisms in the closure of the conjugacy class of
rotations on $\T^2$ are irrational pseudo-rotations exhibiting
unbounded deviations in every direction.

Regarding the remaining case \eqref{enum:segment} of above
trichotomy, Dávalos~\cite{DavalosAnnularMapsTorus} have proved that
any $f\in\Homeo0(\T^2)$ whose rotation set is a non-trivial vertical
segment and contains rational points, exhibits uniformly bounded
horizontal deviations. This result had been proven by
Guelman, Koropecki and Tal~\cite{GuelKorTalAnnAreaPresToral} in the
area-preserving setting.

As the reader could have already noticed, in all above boundedness
results the homeomorphisms have periodic points, and in fact, these
orbits play a fundamental role in their proofs.

The fundamental purpose of this paper consists in pursuing the study
of directional rotational deviations for periodic point free
homeomorphisms and its topological and geometric consequences.

It is well-known that any $2$-torus homeomorphism in the identity
isotopy class that leaves invariant a (non-singular one-dimensional)
foliation exhibits uniformly bounded $v$-deviations, for some
$v\in\Ss^1$ whose direction is completely determined by the asymptotic
behavior of the foliation; and in particular, its rotation set has
empty interior.

However, as it is shown in \cite{BeguinCrovisierJaegerADynDecomp},
there are smooth minimal area-preserving pseudo-rotations exhibiting
uniformly bounded horizontal deviations, but not preserving any
foliation.

One of the main results of this paper, that emerges from the
combination of Theorems~\ref{thm:inv-pseudo-fol-implies-bounded-dev}
and \ref{thm:pseudo-fol-vs-bounded-rot-dev}, establishes that, under
some mild and natural conditions, an element of $\Homeo0(\T^2)$
exhibits uniformly bounded $v$-deviations for some $v\in\Ss^1$ if and
only if it leaves invariant a \emph{pseudo-foliation.}  Torus
pseudo-foliations, which are a slight generalization of classical
one-dimensional foliations, are defined in
\S\ref{sec:tor-foliations}. Pseudo-foliations have been used in
\cite{KocMinimalHomeosNotPseudo} to show that any minimal
homeomorphism which is not a pseudo-rotation is topologically mixing,
and in \cite{KoropeckiPasseggiSambarino} to prove a particular case of
the so called Franks-Misiurewicz conjecture (even though the authors
did not explicitly use this name).


The main character of this work is a new class of dynamical systems on
$\T^2\times\R^2$, called \emph{$\tilde \rho$-centralized
  skew-products}, that are associated to each homeomorphism of $\T^2$
which is isotopic to the identity. In \S~\ref{sec:fibered-stable-sets}
we define the \emph{stable sets at infinity} associated to these
skew-products and in \S~\ref{sec:dir-dev-ppf-homeos} we stablish some
important results about the topology of these sets regarding the
directional rotational devitions of the original system. Stable sets
at infinity are used to contruct the invariant pseudo-foliation of
Theorem~\ref{thm:pseudo-fol-vs-bounded-rot-dev}.

The paper is organized as follows: in \S~\ref{sec:pelim-notat} we fix
the notation that will be used all along the article and recall some
previous results.  In \S~\ref{sec:induced-skew-prod} we introduce the
so called \emph{$\tilde\rho$-centralized skew-products} and their
associated \emph{stable sets at infinity.} We prove some elementary
properties of these sets and use them to get some fundamental results
about rotational deviations, like
Theorem~\ref{thm:v-bound-dev-iff--v-bound-dev} and
Corollary~\ref{cor:unif-bound-dev-iff-Hm-in-Lambda}, showing that
uniformly $v$- and $(-v)$-deviations are equivalent. Then, in
\S~\ref{sec:dir-dev-ppf-homeos} we establish some relations between
the topology of \emph{stable sets at infinity} and \emph{à priori}
boundedness of rotational deviations for non-wandering periodic point
free homeomorphisms. Then, finally in \S~\ref{sec:tor-foliations} we
introduce the new concepts of \emph{torus pseudo-foliations} we show
that under some natural and mild hypotheses, a non-wandering periodic
point free homeomorphism exhibits uniformly bounded rotational
deviations in some direction if and only if it leaves invariant a
torus pseudo-foliation.


\subsection*{Acknowledgments}
\label{sec:acknowledgments}

A.K. was partially supported by FAPERJ (Brazil) and CNPq
(Brazil). F. P.-R. was supported by CAPES (Brazil).

We are very indebted to Andrés Koropecki, Fábio Tal, Rafael Potrie,
and Salvador Addas-Zanata for many insightful discussions about this
work.

\section{Preliminaries and notations}
\label{sec:pelim-notat}

\subsection{Maps, topological spaces and groups}
\label{sec:maps-top-groups}

Given a map $f\colon X\carr$, its set of fixed points will be denoted
by $\Fix(f)$. We shall write $\Per(f):=\bigcup_{n\geq 1}\Fix(f^n)$ for
the set of periodic points. The map $f$ is said to be \emph{periodic
  point free} whenever $\Per(f)=\emptyset$.  If $A\subset X$ denotes
an arbitrary subset, we define its positively maximal $f$-invariant
subset by
\begin{equation}
  \label{eq:pos-max-inv-set-def}
  \I_f^+(A):=\bigcap_{n\geq 0} f^{-n}(A).
\end{equation}
When $f$ is bijective, we can also define its maximal $f$-invariant
subset by
\begin{equation}
  \label{eq:max-inv-set-def}
  \I_f(A):=\I_f^+(A)\cap\I_{f^{-1}}^+(A) = \bigcap_{n\in\Z} f^n(A). 
\end{equation}

When $X$ is a topological space, a homeomorphism $f\colon X\carr$ is
said to be \emph{non-wandering} when for every non-empty open set
$U\subset X$, there exists $n\geq 1$ such that
$f^n(U)\cap U\neq\emptyset$.  On the other hand, $f$ is said to be
\emph{minimal} when every $f$-orbit is dense in $X$.

Given any $A\subset X$, we write $\inter(A)$ to denote its interior,
$\bar A$ for its closure and $\partial_X A$ for its boundary inside
$X$. When $A$ is connected, we write $\cc(X,A)$ for the connected
component of $X$ containing $A$. As usual, we write $\pi_0(X)$ to
denote the set of all connected components of $X$.

If $(X,d)$ is a metric space, the open ball of radius $r>0$ and center
at $x\in X$ will be denoted by $B_r(x)$. Given an arbitrary non-empty
set $A\subset X$, we define its \emph{diameter} by
\begin{displaymath}
  \mathrm{diam}(A):=\sup_{x,y\in A} d(x,y),
\end{displaymath}
and given any point $x\in X$, we write
\begin{displaymath}
  d(x,A):= \inf_{y\in A} d(x,y).
\end{displaymath}

We also consider the space of compact subsets
\begin{displaymath}
  \mc{K}(X):=\{K\subset X : K \text{ is non-empty and compact}\}.
\end{displaymath}
and we endow this space with its Hausdorff distance $d_H$ (induced by
$d$) defined by
\begin{displaymath}
  d_H(K_1,K_2):=\max\left\{\max_{p\in K_1} d(p,K_2), \max_{q\in K_2}
    d(q,K_1) \right\},\quad\forall K_1,K_2\in\mc{K}(X). 
\end{displaymath}
It is well-known that $(\mc{K}(X),d_H)$ is compact whenever $(X,d)$ is
compact itself.

Given a locally compact non-compact topological space $Y$, we write
$\wh{Y}:=Y\sqcup\{\infty\}$ for the one-point compactification of
$Y$. If $A\subset Y$ is an arbitrary subset, $\wh{A}$ will denote its
closure inside the space $\wh{Y}$, and given any continuous proper map
$f\colon Y\carr$, its unique extension to $\wh{Y}$ (that fixes the
point at infinity) will be denoted by $\wh{f}\colon\wh{Y}\carr$.

Whenever $M_1,M_2,\ldots M_n$ denote $n$ arbitrary sets, we shall use
the generic notation
$\pr{i}\colon M_1\times M_2\times\ldots\times M_n\to M_i$ to denote
the $i^\mathrm{th}$-coordinate projection map.

\subsection{Euclidean spaces and tori}
\label{sec:euclidean-spaces-tori}

We consider $\R^d$ endowed with its usual Euclidean structure denoted
by $\scprod{\cdot}{\cdot}$. We write $\norm{v}:=\scprod{v}{v}^{1/2}$,
for any $v\in\R^d$. The unit $(d-1)$-sphere is denoted by
$\Ss^{d-1}:=\{v\in\R^d : \norm{v}=1\}$.  For any
$v\in\R^d\setminus\{0\}$ and each $r\in\R$, we define the
\emph{half-space}
\begin{equation}
  \label{eq:Hvr-half-spaces-def}
  \bb{H}_{r}^v:=\left\{z\in\R^d: \scprod{z}{v}>r\right\}.
\end{equation}

For $d=2$, given any $v=(a,b)\in\R^2$, we define $v^\perp:=(-b,a)$. We
also introduce the following notation for straight lines: given any
$r\in\R$ and $v\in\Ss^1$,
\begin{equation}
  \label{eq:lrv-straight-line}
  \ell_r^v:= rv+\R v^\perp = \{ rv + tv^\perp : t\in\R \}.
\end{equation}

We say that $v\in\Ss^1$ has \emph{rational slope} when there exists
some $t\in\R\setminus\{0\}$ such that $tv\in\Z^2$; otherwise, $v$ is
said to have \emph{irrational slope.}

We will need the following notation for strips on $\R^2$: given
$v\in\Ss^1$ and $s>0$ we define the strip
\begin{equation}
  \label{eq:Asv-strip-def}
  \A^v_s:=\bb{H}^v_{-s}\cap\bb{H}^{-v}_{-s} =
  \{z\in\R^2 : -s<\scprod{z}{v}<s\}.
\end{equation}

Given any $\alpha\in\R^d$, $T_\alpha$ denotes the translation
$T_\alpha\colon z\mapsto z+\alpha$ on $\R^d$.

The $d$-dimensional torus $\R^d/\Z^d$ will be denoted by $\T^d$ and we
write $\pi\colon\R^d\to\T^d$ for the natural quotient
projection. Given any $\alpha\in\T^d$, we write $T_\alpha$ for the
torus translation $T_\alpha\colon\T^d\ni z\mapsto z+\alpha$.

As usual, a point $\alpha\in\R^d$ is said to be \emph{rational} when
$\alpha\in\Q^d$ and is said to be \emph{totally irrational} when
$T_{\pi(\alpha)}$ is a minimal homeomorphism of $\T^d$.

\subsection{The boundary at infinity of planar sets}
\label{sec:boundary-at-infinity}

Given a non-empty set $A\subset\R^2$ and a point $v\in\Ss^1$, we say
that $A$ \emph{accumulates in the direction $v$ at infinity} if there
is sequence $\{x_n\}_{n\geq 0}$ of points in $A$ such that
\begin{displaymath}
  \lim_{n\to\infty}\norm{x_n}=\infty,\quad\text{and} \quad
  \lim_{n\to\infty} \frac{x_n}{\norm{x_n}}=v.
\end{displaymath}
Then we can define the \emph{boundary of $A$ at infinity} as the set
$\partial_\infty A\subset\Ss^1$ consisting of all $v\in\Ss^1$ such
that $A$ accumulates in the direction $v$ at infinity.

\subsection{Surface topology}
\label{sec:surface-topology}

Let $S$ denote an arbitrary connected surface, \ie a two-dimensional
topological connected manifold.

An open subset of $S$ is said to be a \emph{topological disk} when it
is homeomorphic to the open unitary disk $\{z\in\R^2 : \norm{z}<1\}$.
Similarly, a \emph{topological annulus} is an open subset of $S$
homeomorphic to $\T\times\R$.

An \emph{arc} on $S$ is a continuous map $\alpha\colon [0,1]\to S$ and
a \emph{loop} on $S$ is a continuous map $\gamma\colon\T\to S$.

An open non-empty subset $U\subset S$ is said to be \emph{inessential}
when every loop in $U$ is contractible in $S$; otherwise it is said to
be \emph{essential.} An arbitrary subset $V\subset S$ is said to be
\emph{inessential} when there exists an inessential open set
$U\subset S$ such that $V\subset U$.  On the other hand, we say that
$V$ is \emph{fully essential} when $S\setminus V$ is inessential.

We say a subset $A\subset S$ is \emph{annular} when it is an open
topological annulus and none connected component of $S\setminus A$ is
inessential.

Finally, let us recall two classical results about fixed point free
orientation preserving homeomorphisms:

\begin{theorem}[Brouwer's translation theorem
  \cite{FathiOrbCloBrouwer}]
  \label{thm:Brouwer}
  Let $f\colon\R^2\carr$ be an orientation preserving homeomorphism
  such that $\Fix(f)=\emptyset$. Then, every $x\in\R^2$ is wandering
  for $f$, \ie there exists a neighborhood $U$ of $x$ such that
  $f^n(U)\cap U=\emptyset$, for every $n\in\Z\setminus\{0\}$.
\end{theorem}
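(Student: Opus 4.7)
The plan is to reduce the statement to the classical Brouwer lemma on free topological disks. Given $x\in\R^2$, since $\Fix(f)=\emptyset$ we have $f(x)\neq x$, so by continuity there is an open topological disk $U$ containing $x$ whose closure is disjoint from its image: $\bar U\cap f(\bar U)=\emptyset$. Call such a $U$ a \emph{free disk}. It suffices to show that every free disk is wandering, i.e.\ $f^n(U)\cap U=\emptyset$ for all $n\geq 1$; applying the same argument to $f^{-1}$ (which is also orientation preserving and fixed point free) covers the negative iterates, and so $U$ is a wandering neighborhood of $x$.

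The heart of the proof is then Brouwer's lemma: if $U$ is a free disk and there exists $n\geq 2$ with $f^n(U)\cap U\neq\emptyset$, then $f$ must have a fixed point. I would argue this by contradiction, taking the minimal such $n$. Pick $y\in U$ with $f^n(y)\in U$ and choose an arc $\beta\subset U$ from $y$ to $f^n(y)$. A convenient reduction is to replace $U$ by a smaller free subdisk and work with a \emph{translation arc} $\alpha$ from $y$ to $f(y)$ (an arc satisfying $f(\alpha)\cap\alpha=\{f(y)\}$), so that the concatenation $\alpha\cup f(\alpha)\cup\cdots\cup f^{n-1}(\alpha)$ together with $\beta$ produces a Jordan curve $\Gamma\subset\R^2$ bounding a disk $D$.

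The key verification, and what I expect to be the main obstacle, is the combinatorial topology needed to check that $\Gamma$ is actually simple and that the orientation preserving hypothesis forces $f$ either to send $D$ inside itself or to produce a periodic sub-disk on the other side of $\Gamma$; in both cases Brouwer's plane fixed point theorem yields a fixed point of some iterate of $f$ inside the bounded region, and in fact a fixed point of $f$ itself after a small index argument based on the minimality of $n$ (alternatively, one uses the fact that an orientation preserving homeomorphism of a disk into itself has a fixed point, a consequence of the Brouwer fixed point theorem). This contradicts $\Fix(f)=\emptyset$. Orientation preservation is essential here: without it, a glide reflection on $\R^2$ provides a counterexample to translation-type arguments.

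Having established the lemma, the theorem follows: any $x\in\R^2$ admits a free disk neighborhood $U$, Brouwer's lemma then gives $f^n(U)\cap U=\emptyset$ for every $n\geq 1$, and symmetry with $f^{-1}$ extends this to all $n\in\Z\setminus\{0\}$, so $x$ is wandering. Since the result is classical and long proofs exist in the literature (\cite{FathiOrbCloBrouwer}), my proposal would be to cite it rather than reproduce the Jordan-curve construction in full detail.
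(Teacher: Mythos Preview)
The paper does not prove this theorem at all: it is stated as a classical result with a citation to \cite{FathiOrbCloBrouwer} and no proof is given. Your final suggestion, to cite the literature rather than reproduce the argument, is therefore exactly what the paper does.

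Your sketch is the classical route and is sound in outline. Note, incidentally, that the ``Brouwer lemma on free disks'' you isolate as the heart of the argument is precisely the content of the very next statement in the paper, Theorem~\ref{thm:Franks-free-disk} (Franks), which is likewise quoted without proof. So in the logical structure of the paper, Theorem~\ref{thm:Brouwer} and Theorem~\ref{thm:Franks-free-disk} are two independently cited classical facts, whereas your proposal derives the former from (a version of) the latter. That is a reasonable reorganization, but since both are imported from the literature here, there is nothing to compare at the level of proofs.
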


\begin{theorem}[Corollary 1.3 in \cite{FranksNewProofBrouwer}]
  \label{thm:Franks-free-disk}
  Let $f\colon\R^2\carr$ be an orientation preserving homeomorphism
  such that $\Fix(f)=\emptyset$, and $D\subset\R^2$ be an open
  topological disk. Let us suppose $f(D)\cap D=\emptyset$. Then,
  $f^n(D)\cap D=\emptyset$, for every $n\in\Z\setminus\{0\}$.
\end{theorem}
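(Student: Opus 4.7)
The plan is to argue by contradiction using Brouwer's translation theorem (Theorem~\ref{thm:Brouwer}). Suppose the conclusion fails. Because $f^{-1}$ is also an orientation-preserving fixed-point-free homeomorphism of $\R^2$, we may replace $f$ by $f^{-1}$ if necessary and thereby assume that the set $\{n\geq 2 : f^n(D)\cap D\neq\emptyset\}$ is non-empty. Take $n$ to be its minimum. Minimality together with the hypothesis $f(D)\cap D=\emptyset$ then forces $D,f(D),\ldots,f^{n-1}(D)$ to be pairwise disjoint open topological disks, while $f^n(D)$ re-enters $D$.

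From here I would try to extract an $f^n$-invariant planar object that forces a recurrence contradicting Brouwer's theorem. Pick $x\in D$ with $f^n(x)\in D$ and join these points by an arc $\alpha\subset D$. The iterates $f^i(\alpha)\subset f^i(D)$ for $i=0,\ldots,n-1$ then lie in pairwise disjoint disks, and for every $k\in\Z$ the arcs $f^{kn}(\alpha)$ and $f^{(k+1)n}(\alpha)$ share the endpoint $f^{(k+1)n}(x)$. Concatenating them yields an $f^n$-invariant chain $\Theta=\bigcup_{k\in\Z} f^{kn}(\alpha)$ on which $f^n$ acts as a ``translation''. The crux is then to promote $\Theta$ to a genuinely proper embedded arc, or alternatively to produce a topological open disk $E\supset D$ with $f(E)\cap E=\emptyset$ and $f^n(E)\cap E\neq\emptyset$, so that Brouwer's theorem applied to the orientation-preserving fixed-point-free map $f^n$ (or to $f$ together with an invariant half-plane) delivers the contradiction; here one would use that every point of $\R^2$ must be wandering under~$f$.

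A cleaner route, avoiding the combinatorics of the chain $\Theta$, is to invoke Franks' \emph{disk-chain lemma}: any finite cyclic chain of pairwise disjoint open topological disks whose forward iterates loop back must contain a fixed point. Applied to the ``chain of length one'' $(D)$ with the return $f^n(D)\cap D\neq\emptyset$, it yields a fixed point of $f$, contradicting $\Fix(f)=\emptyset$. Either way the logical structure is the same; the disk-chain lemma merely encapsulates the delicate step into a black box.

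The main obstacle I would expect is precisely this delicate step: controlling the self-intersections among the arcs $f^{kn}(\alpha)$ (which can occur because we only have disjointness modulo $n$) so that $\Theta$ can be promoted to an honest properly embedded line, or equivalently, building the auxiliary disk $E$ on which Brouwer's argument can be carried out. This is exactly the technical heart of Franks' proof in \cite{FranksNewProofBrouwer}, and the natural tool to marshal here is the theory of Brouwer translation arcs, which lets one replace $\alpha$ by a subarc with the needed embedding properties at the cost of shrinking the base disk to a smaller free disk inside $D\cap f^{-n}(D)$.
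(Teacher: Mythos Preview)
The paper does not supply a proof of this statement: it is quoted verbatim as Corollary~1.3 of \cite{FranksNewProofBrouwer} in the preliminaries and is used as a black box thereafter. So there is no ``paper's own proof'' to compare your proposal against.

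That said, your sketch is in line with Franks' original argument. The reduction to a minimal $n\geq 2$ and the observation that $D,f(D),\ldots,f^{n-1}(D)$ are then pairwise disjoint free disks is correct, and your ``cleaner route'' via the periodic disk-chain lemma is exactly how Corollary~1.3 is obtained in \cite{FranksNewProofBrouwer}: a single free disk $D$ with $f^n(D)\cap D\neq\emptyset$ is already a periodic disk chain of length one, forcing a fixed point. One small point worth making explicit: when you write ``the orientation-preserving fixed-point-free map $f^n$'', the fixed-point-freeness of $f^n$ is not an assumption but a consequence of Theorem~\ref{thm:Brouwer} applied to $f$ (a periodic point of $f$ would be non-wandering). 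You allude to this, but it deserves a sentence. Your honest assessment of the ``delicate step'' is accurate: promoting the concatenated chain $\Theta$ to a proper translation arc is precisely where Brouwer's theory of translation arcs enters, and it is not something one can improvise; citing the disk-chain lemma is the right move.
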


\subsection{Groups of homeomorphisms}
\label{sec:groups-homeos}

Given any topological manifold $M$, $\Homeo{}(M)$ denotes the group of
homeomorphisms from $M$ onto itself. The subgroup formed by those
homeomorphisms which are homotopic to the identity map $id_M$ will be
denoted by $\Homeo0(M)$.

We define the subgroup $\Thomeo(\T^d)<\Homeo0(\R^d)$ by
\begin{displaymath}
  \Thomeo(\T^d):=\left\{\tilde f\in\Homeo0(\R^d) : \tilde f-id_{\R^d}\in
    C^0(\T^d,\R^d)\right\}.
\end{displaymath}
Notice that in this definition, we are identifying the elements of
$C^0(\T^d,\R^d)$ with those $\Z^d$-periodic functions from $\R^d$ into
itself.

Making some abuse of notation, we also write
$\pi\colon\Thomeo(\T^d)\to\Homeo0(\T^d)$ for the map that associates
to each $\tilde f$ the only torus homeomorphism $\pi\tilde f$ such
that $\tilde f$ is a lift of $\pi\tilde f$. Notice that with our
notations, it holds $\pi T_\alpha =T_{\pi(\alpha)}\in\Homeo0(\T^d)$,
for every $\alpha\in\R^d$.

Given any $\tilde f\in\Thomeo(\T^d)$, we define its \emph{displacement
  function} by
\begin{equation}
  \label{eq:Delta-cocycle}
  \Delta_{\tilde f}:=\tilde f-id_{\R^d}\in C^0(\T^d,\R^d).
\end{equation}
Observe this function can be naturally considered as a cocycle over
$f:=\pi\tilde f$, so we will use the usual notation
\begin{equation}
  \label{eq:Delta-cocycle-property}
  \Delta_{\tilde f}^{(n)}:= \Delta_{\tilde f^n} =
  \mc{S}_f^n\Delta_{\tilde f}, 
  \quad\forall n\in\Z,
\end{equation}
where $\mc{S}_f^n$ denotes the \emph{Birkhoff sum,} \ie given any
$n\in\Z$ and $\phi\colon\T^d\to\R$, $\mc{S}_f^n(\phi)$ is given by
\begin{equation}
  \mc{S}_f^n\phi :=
  \begin{cases}
    \sum_{j=0}^{n-1}\phi\circ f^j,& \text{if } n\geq 1;\\
    0, & \text{if } n=0;\\
    -\sum_{j=1}^{-n}\phi\circ f^{-j},& \text{if } n<0.
  \end{cases}
\end{equation}

The map $\R^d\ni\alpha\mapsto T_\alpha\in\Thomeo(\T^d)$ defines an
injective group homomorphism, and hence, $\R^d$ naturally acts on
$\Thomeo(\T^d)$ by conjugacy. However, since every element of
$\Thomeo(\T^d)$ commutes with $T_{\boldsymbol{p}}$, for all
$\boldsymbol{p}\in\Z^d$, we conclude $\T^d$ itself acts on
$\Thomeo(\T^d)$ by conjugacy, \ie the map
$\Ad\colon\T^d\times\Thomeo(\T^d)\to\Thomeo(\T^d)$ given by
\begin{equation}
  \label{eq:Ad-Td-definition}
  \Ad_t(\tilde f):=T_{\tilde t}^{-1}\circ \tilde f\circ T_{\tilde t},
  \quad\forall(t,\tilde f)\in\T^d\times\Thomeo(\T^d),\ \forall
  \tilde t\in\pi^{-1}(t),
\end{equation}
is well-defined.

\subsection{Invariant measures}
\label{sec:inv-mes-erg-dev-lemm}

Given any topological space $M$, we shall write $\M(M)$ for the space
of Borel probability measures on $M$. We say $\mu\in\M(M)$ is a
\emph{topological measure} when $\mu(U)>0$ for every non-empty open
subset $U\subset M$.

Given any $f\in\Homeo{}(M)$, the space of $f$-invariant probability
measures will be denoted by
$\M(f):=\{\nu\in\M(M) : f_\star\nu = \nu\circ f^{-1}=\nu\}$.

\subsection{Rotation set and rotation vectors}
\label{sec:rotation-set}

Let $f\in\Homeo0(\T^d)$ denote an arbitrary homeomorphism and
$\tilde f\in\Thomeo(\T^d)$ be a lift of $f$. We define the
\emph{rotation set of} $\tilde f$ by
\begin{equation}
  \label{eq:rot-set-def}
  \rho(\tilde f):= \bigcap_{m\geq 0} \overline{\bigcup_{n\geq
      m}\left\{\frac{\Delta_{\tilde f^n}(z)}{n} : z\in\R^d\right\}}.  
\end{equation}
It can be easily shown that $\rho(\tilde f)$ is non-empty, compact and
connected. We say that $f$ is a \emph{pseudo-rotation} when
$\rho(\tilde f)$ is just a point. Notice that whenever
$\tilde f_1,\tilde f_2\in\Thomeo(\T^d)$ are such that
$\pi\tilde f_1=\pi\tilde f_2$, then
$\pi(\rho(\tilde f_1))=\pi(\rho(\tilde f_2))$. Thus, given any
$f\in\Homeo0(\T^d)$, we can just define
\begin{equation}
  \label{eq:rot-set-vect-no-lift}
  \rho(f):=\pi(\rho(\tilde f))\subset\T^d,
\end{equation}
where $\tilde f\in\Thomeo(\T^d)$ denotes any lift of $f$. In
particular, the fact of being a pseudo-rotation depends just on $f$
and not on the chosen lift.

By \eqref{eq:Delta-cocycle-property}, the rotation set is formed by
accumulation points of Birkhoff averages of the displacement function,
so given any $\mu\in\M(\pi\tilde f)$ we can define the \emph{rotation
  vector of $\mu$} by
\begin{equation}
  \label{eq:rot-vector-for-measure}
  \rho_\mu(\tilde f) := \int_{\T^d}\Delta_{\tilde f}\dd\mu,
\end{equation}
and it clearly holds $\rho_\mu(\tilde f)\in\rho(\tilde f)$, whenever
$\mu$ is ergodic.

When $d=1$, by classical Poincaré theory of circle homeomorphisms we
know that $\rho(\tilde f)$ reduces to a point. This does not hold in
higher dimensions, but when $d=2$, after Misiurewicz and Ziemian
\cite{MisiurewiczZiemian} we know that $\rho(\tilde f)$ is not just
connected but also convex. In fact, in the two-dimensional case it
holds
\begin{equation}
  \label{eq:rot-set-vs-inv-meas}
  \rho(\tilde f) = \left\{\rho_{\mu}(\tilde f) : \mu\in
    \M\big(\pi\tilde f\big)\right\}, \quad\forall \tilde
  f\in\Thomeo(\T^2).
\end{equation}

\subsection{Rational rotation vectors and periodic points}
\label{sec:rat-rot-vect-per-points}

Let $\tilde f\in\Thomeo(\T^2)$ be a lift of
$f:=\pi\tilde f\in\Homeo0(\T^2)$. For any $z\in\Per(f)$, there is
$(p_1,p_2,q)\in\Z^2\times\N$ such that
$\tilde f^q(\tilde z)=T_{(p_1,p_2)}(\tilde z)$, for every
$\tilde z\in\pi^{-1}(z)$. Thus, the point
$(p_1/q,p_2/q)\in\rho(\tilde f)\cap\Q^2$, and in this case one says
the periodic point $z$ realizes the rational rotation vector
$(p_1/q,p_2/q)$.

The following result due to Handel asserts that the rotation set of a
periodic point free homeomorphism has empty interior:
\begin{theorem}[Handel \cite{HandelPerPointFree}]
  \label{thm:Handel-ppfree-rho-empty-int}
  Let $f\in\Homeo0(\T^2)$ be a periodic point free homeomorphism and
  $\tilde f\in\Thomeo(\T^2)$ be a lift of $f$. Then, there exists
  $v\in\Ss^1$ and $\alpha\in\R$ so that
  \begin{displaymath}
    \frac{\scprod{\tilde f^n(z)-z}{v}}{n}\to\alpha,\quad \text{as }
    n\to\infty,
  \end{displaymath}
  where the convergence is uniform in $z\in\R^2$. In other words, it
  holds
  \begin{equation}
    \label{eq:rot-set-in-line}
    \rho(\tilde f)\subset\ell_\alpha^v,
  \end{equation}
  where the straight line $\ell_\alpha^v$ is given by
  \eqref{eq:lrv-straight-line}.
\end{theorem}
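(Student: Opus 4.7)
My plan is to derive the statement by combining three standard ingredients: (i) Franks' theorem on the realization of rational rotation vectors by periodic orbits, (ii) the convexity of $\rho(\tilde f)$ due to Misiurewicz and Ziemian, and (iii) the classical equivalence between constant invariant-measure averages and uniform Birkhoff convergence.

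First I would observe that $\rho(\tilde f)$ must have empty interior. Indeed, by Franks' realization theorem, if $\inter\rho(\tilde f)$ were non-empty it would contain some $(p_1/q,p_2/q)\in\Q^2$, and this vector would then be realized by a periodic point of $f$, contradicting the hypothesis $\Per(f)=\emptyset$. Since $\rho(\tilde f)$ is convex by Misiurewicz-Ziemian and has empty interior, it is contained in some affine line. Writing such a line in the form $\ell_\alpha^v=\alpha v+\R v^\perp$ given by \eqref{eq:lrv-straight-line}, we obtain $v\in\Ss^1$ and $\alpha\in\R$ such that $\scprod{\rho}{v}=\alpha$ for every $\rho\in\rho(\tilde f)$. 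By the variational formula \eqref{eq:rot-set-vs-inv-meas}, this is equivalent to
\begin{equation*}
  \int_{\T^2}\scprod{\Delta_{\tilde f}}{v}\dd\mu=\alpha,\quad\forall\mu\in\M(f).
\end{equation*}

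To upgrade this to the uniform convergence statement, I would invoke the standard fact that a continuous function $\psi\in C^0(\T^2,\R)$ whose integral with respect to every $f$-invariant Borel probability measure equals a common constant $c$ automatically satisfies $\frac{1}{n}\mc{S}_f^n\psi\to c$ uniformly on $\T^2$. The proof is a soft compactness argument: if uniform convergence failed, one would find $\varepsilon>0$, a sequence $n_k\uparrow\infty$, and points $z_k\in\T^2$ with $\abs{\mc{S}_f^{n_k}\psi(z_k)/n_k - c}\geq\varepsilon$; by Krylov-Bogolyubov the empirical measures $\mu_k:=\frac{1}{n_k}\sum_{j=0}^{n_k-1}\delta_{f^j(z_k)}$ would then admit a weak-$\ast$ accumulation point $\mu\in\M(f)$ with $\abs{\int\psi\dd\mu - c}\geq\varepsilon$, a contradiction. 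Applying this to $\psi:=\scprod{\Delta_{\tilde f}}{v}$ (well-defined on $\T^2$ by the $\Z^2$-periodicity of $\Delta_{\tilde f}$) with $c:=\alpha$, and invoking the cocycle identity \eqref{eq:Delta-cocycle-property}, we conclude that $\scprod{\tilde f^n(z)-z}{v}/n\to\alpha$ uniformly in $z\in\R^2$.

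The only genuinely deep input is the appeal to Franks' periodic-point realization theorem; every other step is either convexity, linear algebra, or soft ergodic theory. Accordingly, the main obstacle I expect is this use of Franks' result, which itself relies on non-trivial two-dimensional topological dynamics and in particular cannot be circumvented by purely measure-theoretic means, since the rotation-set interior could \emph{a priori} be non-empty without the empirical measures detecting periodic orbits directly.
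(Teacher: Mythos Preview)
The paper does not actually prove this theorem; it is quoted as a result from the literature (Handel~\cite{HandelPerPointFree}) and used as a black box. So there is no proof in the paper to compare against.

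That said, your argument is correct and is in fact the standard modern derivation of the statement. The three ingredients you isolate --- Franks' realization of interior rational rotation vectors by periodic orbits, Misiurewicz--Ziemian convexity, and the Krylov--Bogolyubov argument for uniform convergence of Birkhoff averages when all invariant measures give the same integral --- combine exactly as you describe. One small remark: the passage from ``$\rho(\tilde f)$ has empty interior'' to ``$\rho(\tilde f)$ lies on a line'' uses that $\rho(\tilde f)$ is a convex subset of $\R^2$; you say this, but it is worth stressing that without convexity an empty-interior set need not lie on a line. Your compactness argument for uniform convergence is also fine; the only thing to make explicit is that $\Delta_{\tilde f}$ is $\Z^2$-periodic (which the paper records in \eqref{eq:Delta-cocycle}), so $\psi=\scprod{\Delta_{\tilde f}}{v}$ is indeed a continuous function on the compact space $\T^2$, and the weak-$\ast$ compactness of $\M(\T^2)$ applies.

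Your closing comment is accurate: the only nontrivial input is Franks' theorem. Historically, Handel's original argument proceeded differently, but the route you take via Franks is shorter and is how the result is usually obtained today.
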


In general, not every rational point in $\rho(\tilde f)$ is realized
by a periodic orbit of $f$. However, in the non-wandering case the
following holds:
\begin{theorem}
  \label{thm:non-rational-point-ppf-nw-homeo}
  If $f\in\Homeo0(\T^2)$ is non-wandering and periodic point free,
  then
  \begin{equation}
    \label{eq:ppf-homeo-no-rational-point}
    \rho(f)\cap\Q^2/\Z^2=\emptyset.
  \end{equation}
\end{theorem}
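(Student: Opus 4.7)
The plan is to argue by contradiction: assume there exist a lift $\tilde f\in\Thomeo(\T^2)$ of $f$ and a rational vector $v=(p_1/q,p_2/q)\in\rho(\tilde f)$ with $(p_1,p_2)\in\Z^2$ and $q\geq 1$. The strategy is to reduce the problem to a fixed-point statement for a cleverly chosen lift of an iterate of $f$, and then invoke Franks' theorem on the realization of rational rotation vectors by periodic orbits.

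First I would consider the auxiliary lift $g:=T_{-(p_1,p_2)}\circ\tilde f^q\in\Thomeo(\T^2)$. Since $T_{-(p_1,p_2)}$ projects to the identity on $\T^2$, the map $g$ is a lift of $f^q$; a direct computation yields $\Delta_g=\Delta_{\tilde f^q}-(p_1,p_2)$, which combined with the standard identity $\rho(\tilde f^q)=q\,\rho(\tilde f)$ gives $\rho(g)=q\,\rho(\tilde f)-(p_1,p_2)\ni 0$. Moreover, $f^q=\pi g$ inherits the non-wandering property from $f$ (iterates of non-wandering maps on compact spaces remain non-wandering) and is periodic point free, since any periodic point of $f^q$ is also a periodic point of $f$. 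So the question reduces to: can a non-wandering periodic-point-free $h\in\Homeo0(\T^2)$ admit a lift $\tilde h$ with $0\in\rho(\tilde h)$?

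To close the argument I would invoke Franks' theorem on the realization of rational rotation vectors for chain-recurrent (in particular, non-wandering) torus homeomorphisms isotopic to the identity: applied to $f^q$ with lift $g$, it produces $\tilde z\in\R^2$ with $g(\tilde z)=\tilde z$, equivalently $\tilde f^q(\tilde z)=\tilde z+(p_1,p_2)$. Projecting, $\pi(\tilde z)\in\T^2$ is a periodic point of $f$ of period dividing $q$, contradicting the periodic-point-free hypothesis and establishing the theorem.

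The main obstacle is precisely this last step. By Handel's theorem (Theorem~\ref{thm:Handel-ppfree-rho-empty-int}) the rotation set $\rho(g)$ has empty interior and is contained in a straight line, so $0$ is necessarily a boundary (indeed extremal) point of $\rho(g)$. Consequently, Franks' original realization theorem for vectors in the \emph{interior} of the rotation set does not apply directly, and one must rely on the subsequent extension handling boundary rational points under a chain-recurrence hypothesis, which is implied here by non-wandering.
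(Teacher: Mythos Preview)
Your argument is essentially correct and follows the same core strategy as the paper: both proofs reduce to the statement that a rational rotation vector of a non-wandering (hence chain-recurrent) element of $\Homeo0(\T^2)$ must be realized by a periodic orbit, and derive a contradiction from the periodic-point-free hypothesis.

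The paper organizes this slightly differently. Rather than passing to $g=T_{-(p_1,p_2)}\circ\tilde f^q$, it first invokes Handel's theorem to conclude that $\rho(\tilde f)$ is either a point or a segment, and then treats the two cases with separate citations: the pseudo-rotation case via Misiurewicz--Ziemian (realization of rational extreme points), and the non-degenerate segment case via Jonker--Zhang, Kocsard--Koropecki and D\'avalos (realization of every rational point in the segment under the non-wandering hypothesis). Your reduction to $0\in\rho(g)$ is cleaner in that it avoids the case split, but the black box you end up invoking---realization of a boundary rational vector by a fixed point under chain-recurrence---is precisely the composite result the paper assembles from those references. Attributing it to Franks alone is a bit loose: Franks' original theorem handles only rational vectors in the \emph{interior} of the rotation set, and the boundary case under non-wandering is exactly what the paper's cited works supply.

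One small inaccuracy worth flagging: your parenthetical ``(indeed extremal)'' is not correct in general. When $\rho(g)$ is a non-degenerate segment, $0$ may lie in its relative interior and hence fail to be an extreme point of the convex set $\rho(g)$, even though it is of course a boundary point in $\R^2$. This does not affect the validity of your argument, but the distinction matters because realization at extreme points and at relative-interior points of a segment rest on different arguments in the literature.
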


\begin{proof}
  Let $\tilde f\in\Thomeo(\T^2)$ be a lift of $f$. By
  Theorem~\ref{thm:Handel-ppfree-rho-empty-int}, $\rho(\tilde f)$ has
  empty interior. So, $\rho(\tilde f)$ is either a point or line
  segment.

  By a result Misiurwicz and Ziemian \cite{MisiurewiczZiemian} we know
  that every rational extreme point of the rotation set is realized by
  a periodic orbit. So, if $f$ is a periodic point free
  pseudo-rotation, then condition
  \eqref{eq:ppf-homeo-no-rational-point} necessarily holds.

  Hence, the only remaining case to consider is when $\rho(\tilde f)$
  is a non-degenerate line segment. In such a case, since $f$ is
  non-wandering, every rational point of $\rho(\tilde f)$ would be
  also realized by a periodic orbit (see
  \cite{JonkerZhangTorHomRot,KocKorFreeCurves,DavalosTorusHomeoRotInt}
  for details), and thus \eqref{eq:ppf-homeo-no-rational-point} holds,
  too.
\end{proof}

\subsection{Directional rotational deviations}
\label{sec:rotat-deviations}

Let $f\in\Homeo0(\T^2)$ and $\tilde f\colon\R^2\carr$ be a lift of
$f$. If the rotation set $\rho(\tilde f)$ has empty interior, then
there exists $\alpha\in\R$ and $v\in\Ss^1$ such that
\begin{equation}
  \label{eq:rot-set-in-line}
  \rho(\tilde f)\subset\ell_\alpha^v=\ell_{-\alpha}^{-v},
\end{equation}
where $\ell_\alpha^v$ is the straight line given by
\eqref{eq:lrv-straight-line}.

In such a case we say that a point $z_0\in\T^d$ exhibits \emph{bounded
  $v$-deviations} when there exists a real constant $M=M(z_0,f)>0$
such that
\begin{equation}
  \label{eq:bound-v-deviations}
  \scprod{\Delta_{\tilde f}^{(n)}(z_0)}{v} - n\alpha \leq M, \quad\forall
  n\in\Z. 
\end{equation}

Moreover, we say that $f$ exhibits \emph{uniformly bounded
  $v$-deviations} when there exists $M=M(f)>0$ such that
\begin{equation}
  \label{eq:unif-bound-v-deviations}
  \scprod{\Delta_{\tilde f}^{(n)}(z)}{v}- n\alpha \leq M, \quad\forall
  z\in\T^d,\ \forall n\in\Z.
\end{equation}

\begin{remark}
  \label{rem:line-v-minus-v-symmetry}
  Notice that the lines $\ell_\alpha^v=\alpha v+\R v^\perp$ and
  $\ell_{-\alpha}^{-v}=(-\alpha)(-v) + \R (-v)^\perp$ coincide as
  subsets of $\R^2$. However \emph{à priori} there is no obvious
  relation between $v$-deviation and $(-v)$-deviation.
\end{remark}

\begin{remark}
  \label{rem:rot-dev-torus-homeo-not-lift}
  Once again, notice that this concept of rotational deviation does
  just depend on the torus homeomorphism and not on the chosen lift.
\end{remark}

\begin{remark}
  \label{rem:no-unif-rot-dev}
  Let us make a final comment about the negation of the above concept:
  we will say that $f$ \emph{does not exhibit uniformly bounded
    $v$-deviations} when for every $M>0$, there exist $z\in\T^2$,
  $n\in\Z$ and $\rho\in\rho(\tilde f)$ such that
  \begin{displaymath}
    \scprod{\Delta_{\tilde f}^{(n)}(z)-n\rho}{v}>M.
  \end{displaymath}
  That means that in such a case we are also considering the
  possibility that there exists no $\alpha\in\R$ such that
  $\rho(\tilde f)\subset\ell_\alpha^v$.
\end{remark}

\subsection{Annular and strictly toral dynamics}
\label{sec:annul-strictly-toral}

Here we recall the concepts of \emph{annular} and \emph{strictly
  toral} homeomorphisms that have been introduced by Koropecki and Tal
in \cite{KoroTalStricTor}.

To do this, let $f\in\Homeo0(\T^2)$ denote an arbitrary homeomorphism.

We say that $f$ is \emph{annular} when there exists a lift
$\tilde f\colon\R^2\carr$, $p/q\in\Q$ and $v\in\Ss^1$ with rational
slope such that $\rho(\tilde f)\subset\ell_{p/q}^v$ and $f$ exhibits
uniformly bounded $v$-deviations.\footnote{Our definition of annular
  map is slightly more general than the one given in
  \cite{KoroTalStricTor}. In fact, they just consider the case
  $p/q=0$.}

On the other hand, $f$ is said to be \emph{strictly toral} when $f$ is
not annular and $\Fix(f^k)$ is not fully essential, for any
$k\in\Z\setminus\{0\}$. 

In order to state the main properties of strictly toral
homeomorphisms, we first need to introduce some notations: for any
$x\in\T^2$ and any $r>0$ let us consider the set
\begin{equation}
  \label{eq:Ur-set-def}
  U_r(x,f):=\cc\bigg(\bigcup_{n\in\Z} f^n(B_r(x)),x\bigg). 
\end{equation}
Then, a point $x\in\T^2$ is said to be \emph{inessential for $f$} if
there is some $r>0$ such that $U_r(x,f)$ is inessential; otherwise $x$
is said to be \emph{essential for $f$.}

As a consequence of Theorem~\ref{thm:Brouwer}, we get the following
\begin{proposition}
  \label{pro:ppf-nonwand-every-point-ess}
  If $f$ is periodic point free and non-wandering, then every point of
  $\T^2$ is essential for $f$.
\end{proposition}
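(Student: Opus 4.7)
The plan is to argue by contradiction: assume there is an inessential point $x$, so that for some $r>0$ the set $U:=U_r(x,f)$ is an inessential connected open subset of $\T^2$. Recall that $U$ is the connected component containing $x$ of the $f$-invariant open set $V:=\bigcup_{n\in\Z}f^n(B_r(x))$, and in particular $B_r(x)\subset U$. The first step is to locate an iterate $f^d$ preserving $U$: by non-wandering there exists $n_1\geq 1$ with $f^{n_1}(B_r(x))\cap B_r(x)\neq\emptyset$, and then $B_r(x)\cup f^{n_1}(B_r(x))$ is a connected subset of $V$ passing through $x$, hence contained in $U$, which forces $f^{n_1}(U)=U$. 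Thus $N:=\{n\in\Z:f^n(U)=U\}$ is a nontrivial subgroup of $\Z$, say $N=d\Z$. Exactly the same reasoning applied to any sub-ball $B_\epsilon(x)\subset B_r(x)$ shows that every $n\geq 1$ with $f^n(B_\epsilon(x))\cap B_\epsilon(x)\neq\emptyset$ already belongs to $d\Z$.

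Next, I would lift to the universal cover. Since $U$ is inessential and connected, each component $\tilde U$ of $\pi^{-1}(U)$ maps homeomorphically onto $U$ via $\pi$, and the nonzero lattice translates $T_{(p,q)}(\tilde U)$ are disjoint from $\tilde U$. Fix such a $\tilde U$ containing a lift $\tilde x$ of $x$; since $f^d$ permutes the components of $\pi^{-1}(U)$ and any two lifts of $f^d$ differ by an element of $\Z^2$, there is a (unique) lift $\tilde g$ of $f^d$ with $\tilde g(\tilde U)=\tilde U$. Because $f$ is periodic point free the map $\tilde g$ has no fixed point in $\R^2$, and being isotopic to the identity it is orientation preserving; Brouwer's translation theorem (Theorem~\ref{thm:Brouwer}) therefore provides $\epsilon>0$ such that $D:=B_\epsilon(\tilde x)$ is contained in $\tilde U$, projects homeomorphically onto $B_\epsilon(x)\subset\T^2$, and satisfies $\tilde g^k(D)\cap D=\emptyset$ for every $k\neq 0$.

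Finally, I would combine these ingredients. Since $\tilde g(\tilde U)=\tilde U$ we have $\tilde g^k(D)\subset\tilde U$, while $T_{(p,q)}(D)\subset T_{(p,q)}(\tilde U)$ is disjoint from $\tilde U$ whenever $(p,q)\neq 0$; together with Brouwer this gives $\tilde g^k(D)\cap T_{(p,q)}(D)=\emptyset$ for all $(p,q)\in\Z^2$ and all $k\neq 0$, which projects to $f^{dk}(B_\epsilon(x))\cap B_\epsilon(x)=\emptyset$ for every $k\neq 0$. This contradicts the first paragraph, which (using non-wandering of $f$) produces positive return times of $B_\epsilon(x)$, all of which must lie in $d\Z$.

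The main obstacle I anticipate is the bookkeeping in the first step: one must verify that the set of integers $n$ with $f^n(U)=U$ is really a subgroup, and that positive return times of arbitrarily small sub-balls of $B_r(x)$ automatically belong to $d\Z$; and relatedly, one must select the lift $\tilde g$ of $f^d$ that preserves the chosen component $\tilde U$. Once this setup is in place, the Brouwer step combined with inessentiality (used precisely to convert disjointness in $\R^2$ into disjointness modulo $\Z^2$) is routine.
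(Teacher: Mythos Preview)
Your proof is correct and follows the same overall strategy as the paper's: assume an inessential point, use non-wandering to find a period $d$ with $f^d(U)=U$, then invoke Brouwer to reach a contradiction with periodic-point-freeness. The execution differs in one technical respect. The paper passes to the \emph{filling} $U_F$ of $U$ (the union of $U$ with the inessential components of $\T^2\setminus U$), notes that $U_F$ is an $f^{n_0}$-invariant open topological disk, and applies Brouwer's theorem directly to the restriction $f^{n_0}|_{U_F}$ (conjugate to a plane homeomorphism) to produce a periodic point. You instead lift to $\R^2$: inessentiality guarantees that a component $\tilde U$ of $\pi^{-1}(U)$ projects injectively, you choose the lift $\tilde g$ of $f^d$ preserving $\tilde U$, and apply Brouwer in the plane to obtain a wandering ball which---via the deck-transformation disjointness forced by inessentiality---projects to a ball $B_\epsilon(x)\subset\T^2$ that is wandering for $f^d$. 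Your route avoids the filling step at the cost of a little more bookkeeping about lifts and return times; the paper's route is shorter but tacitly uses that $x$ is non-wandering for the restricted map $f^{n_0}|_{U_F}$, which is exactly the ``return times of $B_\epsilon(x)$ lie in $d\Z$'' observation you spell out in your first paragraph.
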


\begin{proof}
  Let us suppose there is an inessential point $x\in\T^2$, \ie there
  exists $r>0$ such that $U_r(x,f)$ is inessential. Since, $U_r(x,f)$
  is a connected component of an $f$-invariant set, and $f$ is
  non-wandering, there is a positive integer $n_0\in\N$ such that
  $f^{n_0}\big(U_r(x,f)\big)=U_r(x,f)$.

  On the other hand, since $U_r(x,f)$ is inessential, its
  \emph{filling} (\ie the union of $U_r(x,f)$ with all the inessential
  connected components of $\T^2\setminus U_r(x,f)$), which will be
  denoted by $U_F$, is a open topological disk; and it can be easily
  shown that $U_F$ is $f^{n_0}$-invariant itself.

  By Theorem~\ref{thm:Brouwer}, $f^{n_0}$ has a fixed point in $U_F$,
  contradicting the hypothesis that $f$ is periodic point free.
\end{proof}

We shall need the following result about strictly toral dynamics
\cite[Proposition 1.4]{KoroTalStricTor}:
\begin{proposition}
  \label{pro:stricly-toral-dynamics}
  If $f$ is strictly toral, then the set $U_r(x,f)$ is fully
  essential, for any essential point $x\in\T^2$ and every $r>0$.
\end{proposition}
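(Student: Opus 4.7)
The plan is to argue by contradiction: I assume there is an essential point $x\in\T^2$ and $r>0$ such that $U:=U_r(x,f)$ is essential but \emph{not} fully essential, and I aim to derive that $f$ is annular, contradicting strict torality.

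First I would identify the annular topological type of $U$. Since $U$ is connected and essential while some connected component of $\T^2\setminus U$ is essential, a connected component $\tilde U\subset\R^2$ of $\pi^{-1}(U)$ is unbounded with stabilizer in $\Z^2$ equal to $\gen{v}$ for a single primitive $v\in\Z^2$ (rank $0$ would contradict essentiality of $U$, and rank $2$ would contradict the non-full-essentiality). Replacing $U$ by its \emph{filling} $U^F$ (union with all inessential components of its complement) preserves essentiality, non-full-essentiality, and the direction $v$, and converts $U$ into a genuine annular open subset of $\T^2$.

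Next I would promote this to a truly $f$-invariant annular set, which is the delicate step. The set $W:=\bigcup_{n\in\Z}f^n(B_r(x))$ is $f$-invariant, its connected components are permuted by $f$, and the component through $f^n(x)$ is exactly $f^n(U)$. Each such component is again essential, since essentiality of a point is preserved by $f$ (homeomorphisms preserve loops and their homotopy classes), and non-fully-essential, since it is homeomorphic to $U$. But two disjoint essential annular sets of distinct directions must intersect (their homology classes in $H_1(\T^2;\Z)$ have nonzero algebraic intersection, incompatible with both being simultaneously embedded and disjoint), so every $f^n(U)$ has the same direction $v$. Consequently $\Omega:=\bigcup_{n\in\Z} (f^n(U))^F$ is an $f$-invariant, essential, non-fully-essential, annular open subset of $\T^2$ of direction $v$, with $\tilde\Omega$ a connected component of $\pi^{-1}(\Omega)$ stabilized exactly by $T_v$ and contained in a strip of the form $T_{\tilde z}\A^{v^\perp/\norm{v^\perp}}_s$.

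Finally I would read off the annularity of $f$. By $f$-invariance of $\Omega$, the lift $\tilde f$ sends $\tilde\Omega$ to a deck translate $T_w(\tilde\Omega)$ for a unique $w\in\Z^2$; taking the iterate $\tilde f^q$ that fixes the class of $\tilde\Omega$ modulo $\gen v$, one descends to an orientation-preserving homeomorphism of the cylinder $\tilde\Omega/\gen{v}$. Using the non-full-essentiality of $\Fix(f^k)$ for every $k$ (from strict torality) together with Franks' free disk theorem (Theorem~\ref{thm:Franks-free-disk}) applied in the universal cover of this cylinder in the standard Brouwer--Franks fashion, one obtains a uniform bound $\big|\scprod{\tilde f^n(z)-z}{v^\perp}- n\alpha\big|\leq M$ for all $z\in\R^2$ and $n\in\Z$, with $\alpha\in\Q$ determined by the rate at which $\tilde f$ shifts the lifts of $\Omega$ transverse to $v$. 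Since $v\in\Z^2$, the direction $v^\perp/\norm{v^\perp}$ has rational slope, so by the definition recalled in \S\ref{sec:annul-strictly-toral} the map $f$ is annular, contradicting the strictly toral hypothesis.

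The main obstacle is the middle step: because $U_r(x,f)$ itself need not be $f$-invariant, one has to show that the $f$-orbit of components of $W$ stays within a single annular direction, which relies on the topological intersection fact that prevents distinct essential annular directions from coexisting disjointly in $\T^2$. The final step, producing the uniform transverse bound from an invariant essential annular set in the strictly toral setting, is then a standard Brouwer--Franks argument in the cylinder cover.
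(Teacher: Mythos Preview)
First, note that the paper does not prove this proposition: it is quoted as Proposition~1.4 of \cite{KoroTalStricTor}, so there is no in-paper argument to compare against.

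Your strategy---contradict strict torality by showing $f$ is annular---is the right one, and the identification of a primitive direction $v\in\Z^2$ via the stabilizer of a lift, together with the observation that all $f^n(U)$ share this direction, is fine. There is, however, a genuine gap in the passage to an invariant annular set.

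Your set $\Omega=\bigcup_{n\in\Z}(f^n(U))^F$ is not annular in general: the $f^n(U)$ are components of the $f$-invariant set $W$, hence pairwise equal or disjoint, so $\Omega$ is typically disconnected. Your subsequent claim that $\tilde f$ sends the lift $\tilde\Omega$ of one component to a \emph{deck translate} $T_w(\tilde\Omega)$ then fails---$\tilde f(\tilde\Omega)$ covers a \emph{different} component of $\Omega$. What you actually need is that the $f$-orbit of $U$ is finite, i.e.\ $f^{n_0}(U)=U$ for some $n_0\ge 1$, so that one may pass to $f^{n_0}$ and work with the single annulus $U^F$. This is in fact true and elementary: one checks that $U=\bigcup_{n\in S}f^n(B_r(x))$ where $S=\{n:f^n(U)=U\}$ is a subgroup of $\Z$; if $S=\{0\}$ then $U=B_r(x)$, which is inessential for small $r$, contradicting the essentiality of $x$ (and one may reduce to small $r$ since $U_r\subset U_{r'}$ for $r<r'$). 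Your sketch neither states nor proves this, and you yourself flag the middle step as the obstacle, but you locate the difficulty in the wrong place (the common direction, which you handle, rather than periodicity, which you do not).

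In the final step, invoking Franks' free-disk theorem and the non-full-essentiality of $\Fix(f^k)$ is a red herring. Once one has an $f^{n_0}$-invariant essential annulus $U^F$, the uniform transverse bound for \emph{all} $z\in\R^2$ follows from an elementary trapping argument: completing $v$ to a $\Z^2$-basis $\{v,w\}$, the components of $\pi^{-1}(U^F)$ are exactly the translates $T_{kw}(\tilde U^F)$, each contained in a strip of fixed width; $\tilde f^{n_0}$ shifts the index $k$ by a fixed integer; and every point of $\R^2$ lies between two consecutive such strips. No Brouwer--Franks machinery enters, and your sketch does not say what it would be used for.
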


\section{The $\tilde\rho$-centralized skew-product}
\label{sec:induced-skew-prod}

Given a lift $\tilde f\in\Thomeo(\T^2)$ of a torus homeomorphism
$f:=\pi\tilde f$ and any vector $\tilde\rho\in\rho(\tilde f)$, we will
define the \emph{$\tilde\rho$-centralized skew-product} induced by
$\tilde f$ which shall play a key role in this work.

To do that, we first define the map $H\colon\T^2\to\Thomeo(\T^2)$ by
\begin{equation}
  \label{eq:Ht-definition}
  H_t:=\Ad_t\left(T_{\tilde\rho}^{-1}\circ\tilde f\right),
  \quad\forall t\in\T^2,
\end{equation}
where $\Ad$ denotes the $\T^2$-action given
by~\eqref{eq:Ad-Td-definition}.

Considering $H$ as a cocycle over the torus translation
$T_\rho\colon\T^2\carr$, where $\rho:=\pi(\tilde\rho)$, one defines
the \emph{$\tilde\rho$-centralized skew-product} as the skew-product
homeomorphism $F\colon\T^2\times\R^2\carr$ given by
\begin{displaymath}
  F(t,z):=\big(T_\rho(t),H_t(z)\big),\quad\forall
  (t,z)\in\T^2\times\R^2.
\end{displaymath}
One can easily show that
\begin{equation}
  \label{eq:F-in-coordinates}
  F(t,z)=\left(t+\rho, z + \Delta_{\tilde f}\big(t+\pi(z)\big) -
    \tilde\rho\right), \quad\forall (t,z)\in\T^2\times\R^2,
\end{equation}
where $\Delta_{\tilde f}\in C^0(\T^2,\R^2)$ is the displacement
function given by \eqref{eq:Delta-cocycle}. We will use the following
classical notation for cocycles: given $n\in\Z$ and $t\in\T^2$, we
write
\begin{displaymath}
  H_t^{(n)}:=
  \begin{cases}
    id_{\T^2},& \text{if } n=0;\\
    H_{t+(n-1)\rho}\circ H_{t+(n-2)\rho}\circ\cdots \circ H_t,&
    \text{if } n>0;\\
    H_{t+n\rho}^{-1}\circ\cdots\circ H_{t-2\rho}^{-1}\circ
    H_{t-\rho}^{-1},& \text{if } n<0.
  \end{cases}
\end{displaymath}
Using such a notation, it holds
$F^n(t,z):=\big(T_\rho^n(t),H_t^{(n)}(z)\big)$, for all
$(t,z)\in\T^2\times\R^2$ and every $n\in\Z$.

This skew-product $F$ will play a fundamental role in our analysis of
rotational deviations and the following simple formula for iterates of
$F$ represents the main reason:
\begin{equation}
  \label{eq:iter-F}
  \begin{split}
    F^n(t,z)&=\left(T_\rho^n(t),H_{t+(n-1)\rho}\bigg(H_{t+(n-2)\rho}\Big(\cdots
      \Big(H_t(z)\big)\Big)\bigg)\right) \\
    &= \left(t+n\rho,\Ad_{t+(n-1)\rho}\big(T_{\tilde\rho}^{-1}\circ
      \tilde f\big)\circ \cdots\circ \Ad_{t}
      \big(T_{\tilde\rho}^{-1}\circ\tilde f\big)(z)\right)\\
    &=\left(t+n\rho,\Ad_t\big(T_{\tilde\rho}^{-n}\circ\tilde
      f^n\big)(z)\right),
  \end{split}
\end{equation}
for every $(t,z)\in\T^2\times\R^2$ and every $n\in\N$.

Notice that assuming inclusion \eqref{eq:rot-set-in-line}, from
\eqref{eq:iter-F} it easily follows that a point $z\in\R^2$ exhibits
bounded $v$-deviations (as in \eqref{eq:bound-v-deviations}) if and
only if
\begin{equation}
  \label{eq:bound-v-deviations-skew-prod}
  \scprod{H_0^{(n)}(z) - z}{v}\leq M, \quad\forall n\in\Z, 
\end{equation}
where $M=M(z,f)$ is the constant given in
\eqref{eq:bound-v-deviations}.

\subsection{Fibered stable sets at infinity}
\label{sec:fibered-stable-sets}

Continuing with the notation we have introduced at the beginning of
\S~\ref{sec:induced-skew-prod}, let $\wh{\R^2}$ denote the one-point
compactification of $\R^2$, and $\wh F\colon\T^2\times\wh{\R^2}\carr$
be the unique continuous extension of $F$, that clearly satisfies
$\wh F(t,\infty):=(T_\rho(t),\infty)$, for every $t\in\T^2$.

Hence, for every $r\in\R$ and each $t\in\T^2$ we define the
\emph{fibered $(r,v)$-stable set at infinity} of $F$ by
\begin{equation}
  \label{eq:Lambda-hat-r-v-definition}
  \wh{\Lambda_{r}^v}\big(\tilde f,
  t\big):=\cc\left(\{t\}\times\wh{\R^2}\cap
    \I_{\wh{F}}\big(\T^2\times\wh{\bb{H}_r^v}\big), (t,\infty) \right),
\end{equation}
where $\bb{H}_r^v$ is the semi-plane given by
\eqref{eq:Hvr-half-spaces-def}, $\wh{\bb{H}^v_r}$ denotes its closure
in $\wh{\R^2}$ and $\I(\cdot)$ is the maximal invariant set given by
\eqref{eq:max-inv-set-def}.

We also define
\begin{equation}
  \label{eq:Lambda-r-v-definition}
  \Lambda_r^v\big(\tilde f, t\big):=\mathrm{pr}_2\left(
    \wh{\Lambda_r^v}\big(\tilde f, t\big)\setminus
    \{(t,\infty)\}\right)\subset\R^2, \quad\forall t\in\T^2, 
\end{equation}
where $\mathrm{pr}_2\colon\T^2\times\R^2\to\R^2$ denotes the
projection on the second coordinate. Finally, we define the
\emph{$(r,v)$-stable set at infinity} by
\begin{equation}
  \label{eq:Lambda-r-v-union-definition}
  \Lambda_r^v\big(\tilde f\big):=\bigcup_{t\in\T^2}
  \{t\}\times\Lambda_r^v\big(\tilde f,t\big)\subset\T^2\times\R^2.
\end{equation}

For the sake of simplicity, if there is no risk of confusion we shall
just write $\wh{\Lambda_r^v}(t)$, $\Lambda_r^v(t)$ and $\Lambda_r^v$
instead of $\wh{\Lambda_r^v}\big(\tilde f, t\big)$,
$\Lambda_r^v\big(\tilde f,t\big)$ and $\Lambda_r^v\big(\tilde f\big)$,
respectively.

By the very definitions, stable sets at infinity are closed and
$F$-invariant, but at first glance they might be empty. In
Theorem~\ref{thm:Lambda-non-empty} we will prove this is never the
case and we shall use them to get some results about rotational
$v$-deviations for the original homeomorphism $f$.

Our first application of these stable sets at infinity is the
following result which concerns the symmetry of boundedness of $v$-
and $(-v)$-deviations and is just a simple consequence of the
$F$-invariance:

\begin{theorem}
  \label{thm:v-bound-dev-iff--v-bound-dev}
  Assuming inclusion \eqref{eq:rot-set-in-line}, the homeomorphism $f$
  exhibits uniformly bounded $v$-deviations if and only if it exhibits
  uniformly bounded $(-v)$-deviations. More precisely, it holds
  \begin{displaymath}
    \begin{split}
      \sup_{n\in\Z}\sup_{z\in\T^2} \scprod{\Delta_{\tilde
          f}^{(n)}(z)-n\tilde \rho}{v} -\sqrt{2}\leq
      \sup_{n\in\Z}\sup_{z\in\T^2}& \scprod{\Delta_{\tilde
          f}^{(n)}(z)-n\tilde\rho}{-v} \\
      &\leq \sup_{n\in\Z}\sup_{z\in\T^2} \scprod{\Delta_{\tilde
          f}^{(n)}(z)-n\tilde \rho}{v} +\sqrt{2},
    \end{split}
  \end{displaymath}
  for $\tilde\rho\in\rho(\tilde f)$.
\end{theorem}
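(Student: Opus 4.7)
The plan is to exploit the $F$-invariance of the $\tilde\rho$-centralized skew-product in order to obtain the symmetry between $v$- and $(-v)$-deviations directly. Using formula~\eqref{eq:iter-F}, the identity
\begin{displaymath}
  H_t^{(n)}(z) - z = \Delta_{\tilde f}^{(n)}\bigl(t+\pi(z)\bigr) - n\tilde\rho, \quad\forall (t,z,n)\in\T^2\times\R^2\times\Z,
\end{displaymath}
allows one to rewrite
\begin{displaymath}
  \sup_{n\in\Z,\, z\in\T^2}\scprod{\Delta_{\tilde f}^{(n)}(z) - n\tilde\rho}{v} = \sup_{n\in\Z,\, (t,z)\in\T^2\times\R^2}\scprod{H_t^{(n)}(z) - z}{v},
\end{displaymath}
and symmetrically for $-v$. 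In fact, fixing $z=0$ on the right-hand side and varying only $t\in\T^2$ already recovers the full supremum, since $H_t^{(n)}(0) = \Delta_{\tilde f}^{(n)}(t) - n\tilde\rho$.

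The core step is then the following direct consequence of the invertibility of $F$: given any $(t,z,n)\in\T^2\times\R^2\times\Z$, set $(t',z'):=F^n(t,z)$. Because $F^{-n}(t',z')=(t,z)$, one obtains
\begin{displaymath}
  H_{t'}^{(-n)}(z') - z' \;=\; z - z' \;=\; -\bigl(H_t^{(n)}(z) - z\bigr),
\end{displaymath}
and hence
\begin{displaymath}
  \scprod{H_{t'}^{(-n)}(z') - z'}{-v} = \scprod{H_t^{(n)}(z) - z}{v}.
\end{displaymath}
The self-map $(t,z,n)\mapsto(t',z',-n)$ is a bijection of $\T^2\times\R^2\times\Z$, so taking the supremum on both sides yields equality of the two quantities of the theorem. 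The $\sqrt{2}$ slack announced in the statement then comes only from reducing the fiber coordinate $z'\in\R^2$ back to a canonical representative in a unit fundamental domain of $\T^2$ (needed if one insists on reading the identity as a deviation of $f$ at a point of $\T^2$): by $\Z^2$-equivariance of $H$ in its fiber coordinate, this reduction shifts the inner product with $\pm v$ by at most the diameter $\sqrt{2}$ of the unit square.

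Both inequalities in the statement follow simultaneously. The main—and essentially only—obstacle is notational: one must keep straight the passage between the $\R^2$-valued cocycle $H$ and the $\T^2$-based displacement $\Delta_{\tilde f}$ in order to recognise the right-hand side of the key identity as a genuine $(-v)$-deviation of $f$, and verify that as $(t,z,n)$ ranges over $\T^2\times\R^2\times\Z$ the image $(t',z',-n)$ covers the whole parameter space. Once this dictionary is set up, the proof reduces to the tautological observation that taking a supremum over $F$-orbits is invariant under $F^{\pm 1}$, which is precisely what the theorem encodes.
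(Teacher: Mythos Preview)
Your argument is correct and in fact proves something stronger than the stated inequality: the two suprema are \emph{equal}. The cocycle identity $\Delta_{\tilde f}^{(-n)}\bigl(f^n(w)\bigr) = -\Delta_{\tilde f}^{(n)}(w)$ (which is exactly your key step, stripped of the skew-product notation) gives
\[
  \bigl\langle \Delta_{\tilde f}^{(-n)}\bigl(f^n(w)\bigr) - (-n)\tilde\rho,\, -v\bigr\rangle
  = \bigl\langle \Delta_{\tilde f}^{(n)}(w) - n\tilde\rho,\, v\bigr\rangle,
\]
and since $(w,n)\mapsto(f^n(w),-n)$ is a bijection of $\T^2\times\Z$, the two suprema coincide. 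Your paragraph about the $\sqrt{2}$ slack is therefore unnecessary (and slightly off: the displacement $H_{t'}^{(-n)}(z')-z'$ depends only on $t'+\pi(z')$, so reducing $z'$ to a fundamental domain changes nothing).

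This is a genuinely different route from the paper's proof. The paper argues via the fibered stable sets at infinity: assuming bounded $v$-deviations with constant $M$, one shows $\bb{H}_M^{-v}\subset\Lambda_0^{-v}(t)$ for every $t$, then uses $F$-invariance of $\Lambda_0^{-v}$ to forbid the $F$-orbit of a unit fundamental domain placed in $\bb{H}_\varepsilon^v$ from entering $\bb{H}_M^{-v}$; the $\sqrt{2}$ is the diameter of that fundamental domain. Your time-reversal argument is shorter, more elementary, and yields a sharper conclusion. What the paper's approach buys is thematic: it is the first application of the stable sets $\Lambda_r^{\pm v}$ introduced there, and it sets up the corollary characterising bounded deviations by the inclusion $\bb{H}_{r+M}^{\pm v}\subset\Lambda_r^{\pm v}(t)$, which is used repeatedly later.
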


\begin{proof}
  By Remark \ref{rem:line-v-minus-v-symmetry}, the statement is
  completely symmetric with respect to $v$ and $-v$. Thus, let us
  assume $f$ exhibits uniformly bounded $v$-deviations and let us
  prove uniformly boundedness for $(-v)$-deviations. So let us define
  \begin{displaymath}
    M:=\sup_{n\in\Z}\sup_{z\in\T^2} \scprod{\Delta_{\tilde
        f}^{(n)}(z)-n\tilde\rho}{v}\in\R,
  \end{displaymath}
  and notice that $M\geq 0$. Then, by
  \eqref{eq:bound-v-deviations-skew-prod} we know
  \begin{equation}
    \label{eq:bound-H0z-from-below}
    \scprod{H_0^{(n)}(z) - z}{v}\leq M,\quad\forall
    z\in\R^2,\ \forall n\in\Z,
  \end{equation}
  and consequently $\bb{H}_{M-r}^{-v}\subset\Lambda_{-r}^{-v}(0)$.

  For an arbitrary $t\in\T^2$, notice that
  \begin{equation}
    \label{eq:Ht-displacement-function-any-t}
    H_t^{(n)}(z) - z = \Delta_{\tilde f}^{(n)}\big(\pi(z)+t\big) -
    n\tilde\rho, \quad\forall (t,z)\in\T^2\times\R^2,\ \forall
    n\in\Z. 
  \end{equation}

  Then, putting together \eqref{eq:bound-H0z-from-below} and
  \eqref{eq:Ht-displacement-function-any-t} we conclude that
  \begin{displaymath}
    \scprod{H_t^{(n)}(z) - z}{v}\leq M,\quad\forall
    (t,z)\in\T\times\R^2,\ \forall n\in\Z,
  \end{displaymath}
  and hence, $\bb{H}_{M-r}^{-v}\subset\Lambda_{-r}^{-v}(t)$, for any
  $t\in\T^2$.

  Thus, we have
  \begin{displaymath}
    \bb{H}_M^{-v}\subset\Lambda_0^{-v}(t)\subset\bb{H}_0^{-v},
    \quad\forall t\in\T^2,
  \end{displaymath}
  and since $\Lambda_0^{-v}\subset\T^2\times\bb{H}_0^{-v}$ is an
  $F$-invariant set, this implies
  \begin{equation}
    \label{eq:F-invariant-sets-disjoint}
    F^n\big(\T^2\times\bb{H}_\varepsilon^v)\cap\T^2\times\bb{H}_{M}^{-v}
    = \emptyset, \quad\forall n\in\Z,
  \end{equation}
  and for any $\varepsilon>0$.

  In particular, if $D\subset\R^2$ denotes a squared fundamental
  domain for the covering map $\pi\colon\R^2\to\T^2$ such that
  $D\subset\bb{H}_\epsilon^v$, by \eqref{eq:F-in-coordinates} and
  \eqref{eq:F-invariant-sets-disjoint} it follows that
  \begin{equation}
    \label{eq:-v-dev-on-fund-domain}
    \scprod{\Delta_{\tilde f^n}(z) - n\tilde\rho}{-v} < M +
    \varepsilon + \mathrm{diam}(D) =
    M+\varepsilon+\sqrt{2},\quad\forall z\in D. 
  \end{equation}
  Since the displacement function $\Delta_{\tilde f^n}$ is
  $\Z^2$-periodic and \eqref{eq:-v-dev-on-fund-domain} holds for any
  $\varepsilon>0$, we conclude that
  \begin{displaymath}
    \sup_{n\in\Z}\sup_{z\in\T^2}\scprod{\Delta_{\tilde
        f^n}(z)-n\tilde\rho}{-v}\leq \sup_{n\in\Z}\sup_{z\in\T^2}
    \scprod{\Delta_{\tilde f^n}(z)-n\tilde\rho}{v}+ \sqrt{2}.
  \end{displaymath}
  In particular, $f$ exhibits uniformly bounded $(-v)$-deviations.
\end{proof}

For the sake of concreteness, we explicitly state the following
corollary which is a straightforward consequence of
Theorem~\ref{thm:v-bound-dev-iff--v-bound-dev}:
\begin{corollary}
  \label{cor:unif-bound-dev-iff-Hm-in-Lambda}
  The following assertions are all equivalent:
  \begin{enumerate}[(i)]
  \item $f$ exhibits uniformly bounded $v$-deviations;
  \item $f$ exhibits uniformly bounded $(-v)$-deviations;
  \item there exists $M>0$ such that
    \begin{displaymath}
      \bb{H}_{r+M}^v\subset\Lambda_r^v(t), \quad\forall r\in\R,\
      \forall t\in\T^2;
    \end{displaymath}
  \item there exists $M>0$ such that
    \begin{displaymath}
      \bb{H}_{r+M}^{-v}\subset\Lambda_r^{-v}(t), \quad\forall r\in\R,\
      \forall t\in\T^2.
    \end{displaymath}
  \end{enumerate}
\end{corollary}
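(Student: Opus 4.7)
The plan is to exploit the dictionary \eqref{eq:Ht-displacement-function-any-t} between the displacement cocycle of $\tilde f$ and the fibred skew-product $F$, so that every statement about $v$-deviations translates into one about half-spaces trapped inside $\Lambda_r^v$. The equivalence (i)$\Leftrightarrow$(ii) is already the content of Theorem~\ref{thm:v-bound-dev-iff--v-bound-dev}, and exchanging the roles of $v$ and $-v$ immediately gives (iii)$\Leftrightarrow$(iv), so it suffices to prove (i)$\Rightarrow$(iv) and (iv)$\Rightarrow$(i).

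For (i)$\Rightarrow$(iv), the argument is essentially the first half of the proof of Theorem~\ref{thm:v-bound-dev-iff--v-bound-dev}: if $M$ denotes the supremum of $\scprod{H_t^{(n)}(z)-z}{v}$ over $(t,z,n)$, then whenever $\scprod{z}{-v}>M+s$, i.e.\ $\scprod{z}{v}<-M-s$, we have $\scprod{H_t^{(n)}(z)}{v}\leq\scprod{z}{v}+M<-s$, so $(t,z)\in\I_F(\T^2\times \bb{H}_s^{-v})$ for every $t\in\T^2$ and every $n\in\Z$. Since the half-space $\bb{H}_{s+M}^{-v}$ together with the point at infinity is connected in $\wh{\R^2}$, the whole set $\{t\}\times(\bb{H}_{s+M}^{-v}\cup\{\infty\})$ lies in the connected component of $(t,\infty)$ inside $\{t\}\times\wh{\R^2}\cap \I_{\wh F}(\T^2\times\wh{\bb{H}_s^{-v}})$, which gives $\bb{H}_{s+M}^{-v}\subset\Lambda_s^{-v}(t)$ for every $s\in\R$, $t\in\T^2$.

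For the converse (iv)$\Rightarrow$(i), I would argue as follows. Fix $(t,z)\in\T^2\times\R^2$ and let $s=\scprod{z}{-v}$. For every $\varepsilon>0$ set $r=s-M-\varepsilon$, so that $\scprod{z}{-v}=s>r+M$ and hence $z\in\bb{H}_{r+M}^{-v}\subset\Lambda_r^{-v}(t)$ by (iv). By definition of $\Lambda_r^{-v}$ this implies $H_t^{(n)}(z)\in\bb{H}_r^{-v}$ for every $n\in\Z$, i.e.\ $\scprod{H_t^{(n)}(z)-z}{-v}\geq r-s=-M-\varepsilon$, equivalently $\scprod{H_t^{(n)}(z)-z}{v}\leq M+\varepsilon$. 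Letting $\varepsilon\downarrow 0$ and using \eqref{eq:Ht-displacement-function-any-t}, one obtains $\scprod{\Delta_{\tilde f}^{(n)}(z+t)-n\tilde\rho}{v}\leq M$ uniformly in $(t,z,n)$, which is precisely~(i).

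I do not foresee any real obstacle: the only subtle point is the correct bookkeeping of the signs and of the shifts between $r$, $r+M$, $s$ and $s+M$, together with the verification that the fibered stable set $\Lambda_r^v(t)$ really does contain the half-space (and not merely the forward-invariant trajectories), which is handled by observing that the relevant half-space, being connected at infinity, automatically lies in the connected component of $(t,\infty)$. Everything else is a direct rewriting via \eqref{eq:Ht-displacement-function-any-t} and the definition of the $(r,v)$-stable set at infinity.
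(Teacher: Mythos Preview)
Your proof is correct and is essentially the argument the paper has in mind: the paper merely states that the corollary ``is a straightforward consequence of Theorem~\ref{thm:v-bound-dev-iff--v-bound-dev}'', and indeed the computation you spell out for (i)$\Rightarrow$(iv) is already contained in the first half of that proof (where it is shown that $\bb{H}_{M-r}^{-v}\subset\Lambda_{-r}^{-v}(t)$), while your (iv)$\Rightarrow$(i) is the obvious converse reading of the definition. One very minor quibble: your sentence ``exchanging the roles of $v$ and $-v$ immediately gives (iii)$\Leftrightarrow$(iv)'' is slightly misleading --- the symmetry $v\leftrightarrow -v$ turns your argument for (i)$\Leftrightarrow$(iv) into one for (ii)$\Leftrightarrow$(iii), and only after invoking (i)$\Leftrightarrow$(ii) do you get (iii)$\Leftrightarrow$(iv); but the logic is sound.
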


Our next result describes some elementary equivariant properties of
$(r,v)$-stable sets at infinity:
\begin{proposition}
  \label{pro:Lambda-equivariant-properties}
  For each $t\in\T^2$ and any $r\in\R$, the following properties hold:
  \begin{enumerate}[(i)]
  \item \label{eq:Lambda-r-monoton-inclusion}
    $\Lambda_r^v(t)\subset\Lambda_{s}^v(t)$, for every $s<r$;
  \item \label{eq:Lambda-r-monoton-limit}
    \begin{displaymath}
      \Lambda_r^v(t)=\bigcap_{s<r}\Lambda_s^v(t);
    \end{displaymath}
  \item \label{eq:Lambda-t-conjug}
    \begin{displaymath}
      \Lambda_{r+\langle\tilde t,v\rangle}^v\big(t'-\pi\big(\tilde
      t\big)\big) = T_{\tilde t}\big(\Lambda_r^v(t')\big),
      \quad\forall\tilde t\in\R^2,\ \forall t'\in\T^2;
    \end{displaymath}
  \item \label{eq:Lambda-Z2-translations}
    \begin{displaymath}
      T_{\boldsymbol{p}}\big(\Lambda_r^v(t)\big) =
      \Lambda^v_{r+\langle\boldsymbol{p},v\rangle}(t),\quad\forall
      \boldsymbol{p}\in\Z^2. 
    \end{displaymath}  
  \end{enumerate}
\end{proposition}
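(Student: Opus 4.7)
The plan is to verify each item directly from the definitions, using two distinct mechanisms: monotonicity of the half-space family $\{\bb{H}_r^v\}_{r\in\R}$ in $r$ for items (i) and (ii), and an explicit conjugacy of the skew-product $\wh F$ by a suitable homeomorphism of $\T^2\times\wh{\R^2}$ for items (iii) and (iv). For (i), whenever $s<r$ the inclusion $\wh{\bb{H}_r^v}\subset\wh{\bb{H}_s^v}$ is preserved by $\I_{\wh F}$, then by intersection with $\{t\}\times\wh{\R^2}$, and finally by passing to the connected component containing $(t,\infty)$; projecting via $\pr{2}$ (a homeomorphism on the fixed fiber) and removing $(t,\infty)$ yields the claim. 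For (ii), set $A_s:=\{t\}\times\wh{\R^2}\cap\I_{\wh F}(\T^2\times\wh{\bb{H}_s^v})$. Since $\wh{\bb{H}_r^v}=\bigcap_{s<r}\wh{\bb{H}_s^v}$ and the bijectivity of $\wh F$ ensures that both $\I_{\wh F}$ and intersection with a fixed set commute with arbitrary intersections, one has $A_r=\bigcap_{s<r}A_s$. The family $\{\cc(A_s,(t,\infty))\}_{s<r}$ is then a nested decreasing collection of compact connected subsets of the compact Hausdorff space $\{t\}\times\wh{\R^2}$, all containing the common point $(t,\infty)$; the classical fact that such intersections remain connected then gives $\bigcap_{s<r}\cc(A_s,(t,\infty))=\cc(A_r,(t,\infty))=\wh{\Lambda_r^v}(t)$, and projecting finishes the argument.

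For (iii), I would define $\Phi\colon\T^2\times\wh{\R^2}\carr$ by $\Phi(s,z):=(s-\pi(\tilde t),z+\tilde t)$ on the Euclidean part, and $\Phi(s,\infty):=(s-\pi(\tilde t),\infty)$; this is a homeomorphism. Using formula \eqref{eq:F-in-coordinates} together with the identity $(s-\pi(\tilde t))+\pi(z+\tilde t)=s+\pi(z)$, a direct computation yields $\wh F\circ\Phi=\Phi\circ\wh F$. Moreover $T_{\tilde t}(\bb{H}_r^v)=\bb{H}_{r+\scprod{\tilde t}{v}}^v$, so $\Phi\big(\T^2\times\wh{\bb{H}_r^v}\big)=\T^2\times\wh{\bb{H}_{r+\scprod{\tilde t}{v}}^v}$. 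As a $\wh F$-equivariant homeomorphism, $\Phi$ transports $\I_{\wh F}$ of the source onto $\I_{\wh F}$ of the target, maps the fiber $\{t'\}\times\wh{\R^2}$ onto $\{t'-\pi(\tilde t)\}\times\wh{\R^2}$, and sends $(t',\infty)$ to $(t'-\pi(\tilde t),\infty)$; consequently $\Phi(\wh{\Lambda_r^v}(t'))=\wh{\Lambda_{r+\scprod{\tilde t}{v}}^v}(t'-\pi(\tilde t))$. Since $\pr{2}\circ\Phi|_{\{t'\}\times\wh{\R^2}}=T_{\tilde t}\circ\pr{2}$, projecting gives (iii).

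Item (iv) is the specialization of (iii) to $\tilde t=\boldsymbol{p}\in\Z^2$, for which $\pi(\boldsymbol{p})=0$ and hence the base point is unchanged. I expect the only mildly subtle point to be the commutation of connected components with the nested intersection in (ii), which ultimately rests on the compactness of $\{t\}\times\wh{\R^2}$; all remaining assertions follow by routine bookkeeping from the definitions together with the equivariance of the conjugacy $\Phi$.
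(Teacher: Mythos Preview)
Your proof is correct and follows essentially the same approach as the paper: monotonicity of the half-spaces for (i) and (ii), translation equivariance for (iii), and specialization for (iv). The only cosmetic difference is in (iii), where the paper computes the cocycle identity $H_{t'-\pi(\tilde t)}^{(n)}(z)=H_{t'}^{(n)}(z-\tilde t)+\tilde t$ directly from the $\Ad$ definition, whereas you package the same computation as the commutation $\wh F\circ\Phi=\Phi\circ\wh F$; both arguments are equivalent and the remaining bookkeeping is identical.
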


\begin{proof}
  Inclusion \eqref{eq:Lambda-r-monoton-inclusion} trivially follows
  from the inclusion $\bb{H}_r^v\subset\bb{H}_{s}^v$, for $s<r$.

  To show \eqref{eq:Lambda-r-monoton-limit}, let
  $\wh{\Lambda_s^v(t)}=\Lambda_s^v(t)\sqcup\{\infty\}$ denote the
  closure of $\Lambda_s^v(t)$ inside the one-point compactification
  $\wh{\R^2}$. Since $\wh{\Lambda_s^v(t)}$ is compact and connected
  for every $s\in\R$, by \eqref{eq:Lambda-r-monoton-inclusion} we
  conclude
  \begin{displaymath}
    \bigcap_{s<r}\wh{\Lambda_s^v(t)}
  \end{displaymath}
  is compact and connected itself and contains
  $\wh{\Lambda_r^v(t)}$. On the other hand, it clearly holds
  $\bigcap_{s<r}\Lambda_s^v(t)\subset\I_F(\T^2\times\bb{H}_r^v)$ and
  thus, $\bigcap_{s<r}\Lambda_s^v(t)=\Lambda_r^v(t)$.

  To prove \eqref{eq:Lambda-t-conjug}, taking into account
  \eqref{eq:Ad-Td-definition} and \eqref{eq:Ht-definition}, we get
  \begin{displaymath}
    \begin{split}
      H_{t'-\pi(\tilde t)}^{(n)}(z) &= \Ad_{t'-\pi(\tilde t)}
      \big(T^{-n}_{\tilde \rho}\circ\tilde f^n\big)(z) \\
      &= T_{\tilde t}\circ \Ad_{t'}\big(R^{-n}_\rho\circ\tilde
      f^n\big)\circ T_{\tilde t}^{-1} (z) \\
      &= H^{(n)}_{t'}(z-\tilde t) + \tilde t,
    \end{split}
  \end{displaymath}
  for every $z\in\R^2$ and every $n\in\Z$. Hence, for any
  $z\in\Lambda^v_{r+\scprod{\tilde t}{v}}\big(t'-\pi(\tilde t)\big)$,
  it holds
  \begin{displaymath}
    \begin{split}
      r&\leq \scprod{H_{t'-\pi(\tilde t)}^{(n)}(z)}{v} -
      \scprod{\tilde t}{v}\\
      &= \scprod{H^{(n)}_{t'}(z-\tilde t) + \tilde t}{v} -
      \scprod{\tilde t}{v} = \scprod{H_{t'}^{(n)}(z-\tilde t)}{v},
    \end{split}
  \end{displaymath}
  for any $n\in\Z$. This implies
  $z\in T_{\tilde t}\big(\Lambda_r^v(t')\big)$, and so,
  $\Lambda^v_{r+\scprod{\tilde t}{v}}\big(t'-\pi(\tilde t)\big)\subset
  T_{\tilde t}\big(\Lambda_r^v(t')\big)$. The other inclusion is
  symmetric.

  Finally, relation \eqref{eq:Lambda-Z2-translations} is just a
  particular case of \eqref{eq:Lambda-t-conjug}.
\end{proof}

Now we can show the main result of this section:
\begin{theorem}
  \label{thm:Lambda-non-empty}
  Assuming there exist $\alpha\in\R$ and $v\in\Ss^1$ such that
  condition \eqref{eq:rot-set-in-line} holds, the fibered
  $(r,v)$-stable set at infinity $\Lambda_r^v(t)$ is non-empty, for
  all $r\in\R$ and every $t\in\T^2$.
\end{theorem}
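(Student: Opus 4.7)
The plan is to produce the required non-trivial connected component of $(t,\infty)$ inside $\{t\}\times\wh{\R^2}\cap\I_{\wh F}(\T^2\times\wh{\bb{H}_r^v})$ via a nested intersection of truncated invariant continua. Identifying $\{t\}\times\wh{\R^2}$ with $\wh{\R^2}$ through $\pr{2}$, for each $n\in\Z$ the set $D_n:=\bigl(\wh{H_t^{(n)}}\bigr)^{-1}\bigl(\wh{\bb{H}_r^v}\bigr)$ is a closed topological disk in $\wh{\R^2}$ with $\infty$ on its boundary, since $\wh{H_t^{(n)}}$ is a homeomorphism of $\wh{\R^2}$ fixing $\infty$. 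The key quantitative point is that $H_t^{(n)}(z)-z=\Delta_{\tilde f}^{(n)}(\pi(z)+t)-n\tilde\rho$ is uniformly bounded over $z\in\R^2$ by a constant $C_n\geq 0$ depending only on $n$, whence $\bb{H}_{r+C_n}^v\subset D_n$.

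For each $N\in\N$ set $E_N:=\bigcap_{|n|\leq N}D_n$ and denote by $K_N$ the connected component of $\infty$ in $E_N$. Writing $C^{(N)}:=\max_{|n|\leq N}C_n$, the closure $\wh{\bb{H}_{r+C^{(N)}}^v}$ is a connected subset of $E_N$ containing $\infty$, so $K_N$ is a continuum strictly larger than $\{\infty\}$. Since $\{K_N\}_N$ is a nested decreasing family of continua sharing the common point $\infty$, the classical nested-intersection principle for continua in a compact Hausdorff space yields that $K:=\bigcap_N K_N$ is a non-empty continuum containing $\infty$. By construction $K\subset\bigcap_N E_N$, which is precisely the fiber at $t$ of $\I_{\wh F}(\T^2\times\wh{\bb{H}_r^v})$, so $K\subset\wh{\Lambda_r^v}(t)$.

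The heart of the argument is then to rule out $K=\{\infty\}$, i.e., to show the continua $K_N$ do not collapse to a single point. Were the $C_n$ uniformly bounded in $n$ this would be immediate, since every $K_N$ would contain the fixed half-plane closure $\wh{\bb{H}_{r+\sup_n C_n}^v}$; in general however $C_n$ can diverge. The hypothesis $\rho(\tilde f)\subset\ell_\alpha^v$ is exactly what averts collapse: via Misiurewicz--Ziemian it enforces the uniform sublinearity $C_n/n\to 0$, so the boundary curves $\partial D_n$ remain asymptotic to $\ell_r^v$ at $\infty$ in a controlled way. Combined with the observation that $H_t^{(n)}(z)/\norm{z}\to z/\norm{z}$ as $\norm{z}\to\infty$ for each fixed $n$---so that $\wh{H_t^{(n)}}$ preserves every asymptotic direction at $\infty$---this permits extracting a genuine sub-continuum of $K$ extending from $\infty$ into $\R^2$, giving $\pr{2}(K)\setminus\{\infty\}\subset\Lambda_r^v(t)$ non-empty.
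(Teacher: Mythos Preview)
Your setup through $K=\bigcap_N K_N$ is correct: the nested intersection of continua in $\wh{\R^2}$ is a continuum containing $\infty$, and it sits inside $\wh{\Lambda_r^v}(t)$. The gap is your final paragraph, which is meant to show $K\neq\{\infty\}$; this is the entire content of the theorem, and what you write there is not an argument. The only explicit subset of $K_N$ you produce is $\wh{\bb{H}^v_{r+C^{(N)}}}$, and the moment $C_n\to\infty$ (no matter how slowly) these half-planes collapse to $\{\infty\}$. Sublinearity $C_n/n\to 0$ does not prevent this---and incidentally, it is not even true as you stated it: if $\rho(\tilde f)$ is a nondegenerate segment then $\norm{\Delta_{\tilde f}^{(n)}(z)-n\tilde\rho}/n$ need not vanish; only the $v$-component is sublinear. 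Your observation that each individual $\wh{H_t^{(n)}}$ preserves directions at $\infty$ is correct but says nothing about the joint intersection over all $n$: it gives no uniformity, hence no surviving sub-continuum. The sentence ``this permits extracting a genuine sub-continuum\ldots'' is an assertion, not a proof.

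The paper's proof is structured quite differently and the missing idea is essential. One splits into cases. If $f$ has uniformly bounded $(-v)$-deviations, Corollary~\ref{cor:unif-bound-dev-iff-Hm-in-Lambda} places a whole half-plane inside $\Lambda_r^v(t)$ and one is done. Otherwise, the \emph{unboundedness} of $(-v)$-deviations is used constructively, in a Birkhoff-style argument: for each large $a>0$ there is a first time $n(a)$ at which $F^{-n(a)}(\T^2\times\ell_{r+a}^v)$ crosses $\T^2\times\ell_r^v$, and this yields an arc $\gamma_a\subset\bb{H}_r^v$ joining $\ell_r^v$ to $\infty$ that lies in $\bigcap_{j=0}^{n(a)}\pr{2}\bigl(F^{-j}(\T^2\times\bb{H}_r^v)\bigr)$. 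After an integer translation normalizing $\gamma_a(0)$ to a fixed compact window, a Hausdorff limit of these arcs (as $a\to\infty$, so $n(a)\to\infty$) gives an honest unbounded continuum in the one-sided set $B^+(t_\infty)$; equivariance propagates this to every fiber, and a final step using the rotation-set constraint descends from $B^+$ to the two-sided $\Lambda_r^v$. The crucial point---producing arcs whose base-points remain in a bounded region while their forward-invariance horizon $n(a)$ tends to infinity---is exactly what prevents collapse, and it has no counterpart in your proposal.
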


\begin{proof}[Proof of Theorem~\ref{thm:Lambda-non-empty}]
  Our strategy to show $\Lambda_r^v(t)$ is non-empty is mainly
  inspired by some ideas due to Birkhoff~\cite{BirkhoffNouvelles}.

  By Corollary~\ref{cor:unif-bound-dev-iff-Hm-in-Lambda}, we know that
  $\Lambda_r^v(t)\neq\emptyset$ whenever $f$ exhibits uniformly
  bounded $(-v)$-deviations.

  Hence we can suppose this is not the case, so either
  \begin{equation}
    \label{eq:unbounded-deviations-pos-times}
    \sup_{n\geq 0}\sup_{z\in\T^2} \scprod{\Delta_{\tilde f}^{(n)}(z)
      -n\tilde\rho}{-v}=+\infty,
  \end{equation}
  or
  \begin{equation}
    \label{eq:unbounded-deviations-neg-times}
    \sup_{n\leq 0}\sup_{z\in\T^2} \scprod{\Delta_{\tilde f}^{(n)}(z)
      -n\tilde\rho}{-v}=+\infty.
  \end{equation}

  For the sake of concreteness, let us suppose
  \eqref{eq:unbounded-deviations-neg-times} holds. Then, consider the
  set
  \begin{displaymath}
    \wh{B^+}:=\bigcap_{j=0}^{+\infty}
    \wh{F}^{-j}\Big(\T^2\times\wh{\bb{H}_r^v}\Big)\subset
    \T^2\times\wh{\R^2}. 
  \end{displaymath}

  We will show that $\wh{B^+}$ exhibits unbounded connected components
  along the fibers. More precisely, for each $t\in\T^2$ we define
  \begin{equation}
    \label{eq:B-wh-plus-t}
    B^+(t):=\mathrm{pr}_2\left(\cc\Big(\wh{B^+}\cap
      \{t\}\times\wh{\R^2},(t,\infty)\Big)\right)
    \setminus\{(t,\infty)\} \subset\R^2, 
  \end{equation}
  where $\mathrm{pr}_2\colon\T^2\times\R^2\to\R^2$ denotes the
  projection on the second factor, and we shall show that $B^+(t)$ is
  non-empty, for all $t$.

  Since we are assuming \eqref{eq:unbounded-deviations-neg-times}
  holds, for every $a>0$ we can define the following natural number:
  \begin{equation}
    \label{eq:n-a-t-def}
    n(a):=\min\Big\{n\in\N :
    F^{-n}\big(\T^2\times\ell_{a+r}^v\big)\cap
    \big(\T^2\times\bb{H}_{-r}^{-v}\big) \neq\emptyset\Big\}.
  \end{equation}
  Then, there exists a point $t_a\in\T^2$ such that
  \begin{displaymath}
    F^{-n(a)}\big(\{t_a+n(a)\rho\}\times\bb{H}_{r+a}^v\big)\cap 
    \{t_a\}\times\bb{H}_{-r}^{-v}\neq\emptyset.
  \end{displaymath}
  So, we can find a simple continuous arc
  $\gamma_{a}\colon [0,1]\to\wh{\R^2}$ such that
  \begin{equation}
    \label{eq:gamma-t-a-def}
    \begin{split}
      &\gamma_{a}(0)\in\ell_r^v=\partial\bb{H}_r^v,\\
      &\gamma_{a}(1)=\infty,\\
      &\gamma_{a}[0,1)\subset\mathrm{pr}_2
      \Big(F^{-n(a)}\big(\{t+n(a)\rho\}\times\bb{H}_{r+a}^v\big)\Big)
      \cap\bb{H}_r^v,
    \end{split}
  \end{equation}
  and a lattice point $\boldsymbol{p}_{a}\in\Z^2$ such that
  \begin{equation}
    \label{eq:gamma-t-a-bringing-to-I2}
    \begin{split}
      &\scprod{\boldsymbol{p}_{a}}{v}\geq 0,\\
      &\norm{\gamma_{a}(0)+\boldsymbol{p}_{a}-rv} \leq\sqrt{2}.
    \end{split}
  \end{equation}
  Let us define the set
  \begin{equation}
    \label{eq:Gamma-a-t-def}
    \Gamma_{a}:=\left\{\gamma_{a}(s) +
      \boldsymbol{p}_{a} : s\in[0,1)\right\}\subset\R^2. 
  \end{equation}

  Putting together \eqref{eq:n-a-t-def} and \eqref{eq:gamma-t-a-def},
  we conclude that
  \begin{displaymath}
    F^j\big(t_a,\Gamma_{a}\big)\subset
    \T^2\times\bb{H}_r^v,\quad\text{for } 0\leq j\leq n(a)-1.
  \end{displaymath}

  Now, since $\wh{\R^2}$ is compact, its space of compact subsets
  $\mc{K}(\wh{\R^2})$ is compact, too. Hence, we can find a strictly
  increasing sequence of natural numbers $(m_j)$, a point
  $t_\infty\in\T^2$ and $\wh{\Gamma}\in\mc{K}(\wh{\R^2})$ such that
  $t_{m_j}\to t_\infty$ and
  \begin{displaymath}
    \wh{\Gamma_{m_j}}\to\wh{\Gamma},\quad\text{as } j\to\infty,
  \end{displaymath}
  where $\wh{\Gamma_{m_j}}$ denotes the closure of $\Gamma_{m_j}$
  inside the space $\wh{\R^2}$ and the convergence is in the Hausdorff
  distance.

  Then observe that $n(m_j)\to+\infty$ as $m_j\to+\infty$. So,
  $\wh{\Gamma}\subset\wh{B^+}$. Then, the set
  $\Gamma:=\wh\Gamma\setminus\{(t_\infty,\infty)\}$ is closed in
  $\R^2$, connected, and by \eqref{eq:gamma-t-a-bringing-to-I2}, is
  non-empty and unbounded. In particular,
  $\Gamma\subset B^+(t_\infty)$ and so, $B^+(t_\infty)$ is non-empty.

  Now, consider an arbitrary point $t\in\T^2$ and let us show that
  $B^+(t)$ is non-empty, too. To do this, let us choose two points
  $\tilde t_\infty\in\pi^{-1}(t_\infty)$ and $\tilde t\in\pi^{-1}(t)$
  so that $\scprod{\tilde t_\infty-\tilde t}{v}\geq 0$. Hence, it
  holds
  \begin{equation}
    \label{eq:T-t-infty-t-B-in-Hvr}
    T_{\tilde t_\infty-\tilde t}\big(\bb{H}_r^v\big)
    \subset\bb{H}_r^v.
  \end{equation}

  On the other hand, invoking \eqref{eq:Ht-definition} one easily see
  that
  \begin{displaymath}
    \begin{split}
      H_t^{(n)}\left(T_{\tilde t_\infty-\tilde
          t}\big(B^+(t_\infty)\big)\right) &=
      \Ad_t(T_{\tilde\rho}^{-n}\circ \tilde f^n) \left(T_{\tilde
          t_\infty-\tilde t}\big(B^+(t_\infty)\big)\right) \\
      & = T_{\tilde t}^{-1}\circ T_{\tilde\rho}^{-n} \circ\tilde
      f^n\circ T_{\tilde t}\circ T_{\tilde t_\infty-\tilde
        t}\big(B^+(t_\infty)\big)\\
      &= T_{\tilde t_\infty-\tilde t}\circ
      \Ad_{t_\infty}(T_{\tilde\rho}^{-n}\circ\tilde
      f^n)\big(B^+(t_\infty)\big) \\
      &\subset T_{\tilde t_\infty-\tilde
        t}(\bb{H}_r^v)\subset\bb{H}_r^v,
    \end{split}
  \end{displaymath}
  for every $n\geq 0$. So, we conclude
  $T_{\tilde t_\infty-\tilde t}(B^+(\tilde t_\infty))\subset B^+(t)$
  and in particular, $B^+(t)$ is non-empty.

  Finally, let us show that $\Lambda^v_r(t)\neq\emptyset$, for every
  $t\in\T^2$. To do that, let us consider the set
  \begin{displaymath}
    \mathscr{B}^+:=\bigcup_{t\in\T^2}\{t\}\times B^+(t)\subset
    \T^2\times\R^2. 
  \end{displaymath} 
  Notice that $F(\mathscr{B}^+)\subset\mathscr{B}^+$ and
  \begin{displaymath}
    \Lambda_r^v=\bigcap_{n\geq 0} F^n(\mathscr{B}^+).
  \end{displaymath}

  So, if we suppose that $\Lambda_r^v=\emptyset$, then there should
  exist $\boldsymbol{p}\in\Z^2$ and $n_0\geq 0$ such that
  $\scprod{\boldsymbol{p}}{v}>0$ and
  \begin{displaymath}
    F^{n_0}(\mathscr{B}^+)\subset \T^2\times
    T_{\boldsymbol{p}}(\bb{H}_r^v) \subsetneq
    \T^2\times\bb{H}_r^v.
  \end{displaymath}
  But since $F$ commutes with the map
  $id\times T_{\boldsymbol{p}}\colon\T^2\times\R^2\carr$, it follows
  that
  \begin{displaymath}
    F^{jn_0}(\mathscr{B}^+)\subset
    \T^2\times T_{\boldsymbol{p}}^j(\bb{H}_r^v),\quad \forall j\in\N,
  \end{displaymath}
  contradicting inclusion \eqref{eq:rot-set-in-line}. So, we have
  showed $\Lambda_r^v$ is non-empty. In particular, there exists
  $t'\in\T^2$ such that $\Lambda_r^v(t')\neq\emptyset$. Invoking the
  very same argument we used to show that for all $t$, $B^+(t)$ was
  non-empty provided $B^+(t_\infty)\neq\emptyset$, one can prove that
  $\Lambda_r^v(t)\neq\emptyset$, for every $t\in\T^2$.
\end{proof}

The last result of this section is an elementary fact about the
topology of fibered invariant sets at infinity under the assumption of
unbounded $v$-deviations:

\begin{proposition}
  \label{pro:Lambda-v-unbounded}
  If $f$ does not exhibits uniformly bounded $v$-deviations, then for
  any $r\in\R$, any $t\in\T^2$ and any $z\in\Lambda_r^v(t)$, it holds
  \begin{equation}
    \label{eq:Lambda-cc-v-unbounded}
    \cc\left(\Lambda_r^v(t),z\right)\nsubset \A_s^v,\quad\forall s>0,
  \end{equation}
  where $\A_s^v$ is the strip given by \eqref{eq:Asv-strip-def}.
\end{proposition}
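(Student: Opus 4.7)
The approach is proof by contradiction. Suppose there exist $r\in\R$, $t\in\T^2$, $z\in\Lambda_r^v(t)$ and $s>0$ with $C:=\cc(\Lambda_r^v(t),z)\subset\A_s^v$. By enlarging $s$ if necessary (the condition $C\subset \A_s^v$ is preserved under $s\mapsto s'\geq s$) we may assume $s>|r|$, so $C\subset\{w:r\leq\scprod{w}{v}<s\}$. Identifying the fibre $\{t\}\times\wh{\R^2}$ with $\wh{\R^2}$, the set $K:=\wh{\Lambda_r^v}(t)$ is by construction a continuum in $\wh{\R^2}$ containing $\infty$, with $K\setminus\{\infty\}=\Lambda_r^v(t)$. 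Since $K$ is connected, every component of $\Lambda_r^v(t)$ has $\infty$ as a limit point in $\wh{\R^2}$; in particular $C$ is unbounded in $\R^2$.

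The first real step is to show that $C$ is unbounded in the $v^\perp$-direction. Fix $R>0$ large enough that $z\in B_R(0)$ and consider the bounded open rectangle
\begin{displaymath}
  U:=\A_s^v\cap\{w\in\R^2:|\scprod{w}{v^\perp}|<R\},
\end{displaymath}
which is open in $\wh{\R^2}$ with $\infty\notin\overline U$. Since $K\cap U\ni z$ is non-empty and $K\setminus U\ni\infty$ is non-empty, the continuum-theoretic boundary bumping theorem gives that every component of $K\cap\overline U$ meets $\partial U$. Let $K_0$ denote the component containing $z$; as $K_0$ is a connected subset of $\Lambda_r^v(t)$ passing through $z$, maximality forces $K_0\subset C$. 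Now $C\subset\A_s^v$ avoids the level $\{\scprod{\cdot}{v}=s\}$, and $K_0\subset\Lambda_r^v(t)\subset\{\scprod{\cdot}{v}\geq r>-s\}$ avoids the level $\{\scprod{\cdot}{v}=-s\}$, so $K_0\cap\partial U$ must lie inside $\{|\scprod{\cdot}{v^\perp}|=R\}$. Since $R$ was arbitrary, $C$ has points with $|\scprod{\cdot}{v^\perp}|\geq R$ for every $R>0$.

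To derive the contradiction, one now exploits the unbounded $v$-deviations. By Theorem~\ref{thm:v-bound-dev-iff--v-bound-dev} and the negation of Corollary~\ref{cor:unif-bound-dev-iff-Hm-in-Lambda}(iii), for every $M>0$ there are $r',t'$ with $\bb{H}_{r'+M}^v\not\subset\Lambda_{r'}^v(t')$; translating by Proposition~\ref{pro:Lambda-equivariant-properties}(iii)--(iv), this ``defect'' can be transported to the fixed base level $r$ and base point $t$. Combining this with the $v^\perp$-unboundedness of $C$ obtained above and the fact that $T_{\boldsymbol p}(C)\subset\Lambda_{r+\scprod{\boldsymbol p}{v}}^v(t)\subset\Lambda_r^v(t)$ for each $\boldsymbol p\in\Z^2$ with $\scprod{\boldsymbol p}{v}>0$, the idea is to construct, through a Birkhoff-type limit argument analogous to the one in the proof of Theorem~\ref{thm:Lambda-non-empty}, arcs inside $\Lambda_r^v(t)$ that start from $C$, use the large-$v$ displacement of some orbit to reach height $>s$, and stay within the same component by the $v^\perp$-thickness of $C$. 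The resulting continuum would be connected in $\Lambda_r^v(t)$, contain $z$ and reach $\scprod{\cdot}{v}\geq s$, contradicting $C\subset\A_s^v$. The main obstacle is precisely this last construction: one must carefully ensure that the limit continuum does not ``jump'' to a neighbouring component of $\Lambda_r^v(t)$, and the argument is expected to split along the rational vs.\ irrational slope dichotomy of $v$, because these control the arithmetic of the available lattice shifts $\scprod{\boldsymbol p}{v}$.
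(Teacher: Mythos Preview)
Your first step is fine and matches the paper: after normalising to $r=0$, $t=0$ via Proposition~\ref{pro:Lambda-equivariant-properties}\eqref{eq:Lambda-t-conjug}, one shows that the component $C=\Lambda_z\subset\A_s^v$ is unbounded in the $v^\perp$-direction (the paper just asserts this, your boundary-bumping justification is a reasonable way to do it). The genuine gap is everything after that. You propose a Birkhoff-type limit construction and a rational/irrational slope case split, and you explicitly flag that you do not know how to prevent the limit continuum from landing in a different component. None of that is needed, and the paper's argument is both short and elementary.

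Here is the missing idea. You already noted that $T_{\boldsymbol p}(\Lambda_z)\subset\Lambda_0^v(0)$ whenever $\scprod{\boldsymbol p}{v}\geq 0$. Now pick a sequence $(\boldsymbol p_n)_{n\geq 0}\subset\Z^2$ with $0\leq\scprod{\boldsymbol p_n}{v}\leq 1$ and $\scprod{\boldsymbol p_n}{v^\perp}\to-\infty$ (such a sequence exists for any $v\in\Ss^1$, no slope dichotomy required). Then each $T_{\boldsymbol p_n}(\Lambda_z)$ is a closed connected subset of $\Lambda_0^v(0)\cap\A_{s+1}^v$ that is unbounded in, say, the $+v^\perp$-direction, and contains points with $v^\perp$-coordinate tending to $-\infty$ as $n\to\infty$. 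The union $\bigcup_n T_{\boldsymbol p_n}(\Lambda_z)\subset\A_{s+1}^v$ therefore separates $\bb H_{s+1}^v$ from $\bb H_{s+1}^{-v}$ in $\R^2$. Since this union lies in $\Lambda_0^v(0)$ and $\wh{\Lambda_0^v}(0)$ is the connected component of the maximal invariant set through $\infty$, the half-plane $\bb H_{s+1}^v$ is forced into $\Lambda_0^v(0)$. Corollary~\ref{cor:unif-bound-dev-iff-Hm-in-Lambda} then gives uniformly bounded $v$-deviations, the desired contradiction. No dynamics, no limits of arcs, and no need to keep control of which component you land in: you never try to enlarge $C$, you instead show $\Lambda_0^v(0)$ contains a half-plane.
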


\begin{proof}
  Let us suppose \eqref{eq:Lambda-cc-v-unbounded} is not true. Then,
  by \eqref{eq:Lambda-t-conjug} of
  Proposition~\ref{pro:Lambda-equivariant-properties}, we know there
  exist $s>0$ and $z\in\Lambda_0^v(0)$ such that
  \begin{displaymath}
    \Lambda_z:=\cc\left(\Lambda_0^v(0),z\right)\subset\A_s^v.
  \end{displaymath}
  Since every connected component of $\Lambda_0^v(0)$ is unbounded in
  $\R^2$, without loss of generality we can assume there exists a
  sequence $(z_n)_{n\geq 0}\subset\Lambda_z$ such that
  \begin{equation}
    \label{eq:z-n-in-Lambda-z-to-+infty}
    \lim_{n\to+\infty}\scprod{z_n}{v^\perp}=+\infty. 
  \end{equation}

  On the other hand, there exists a sequence
  $(\boldsymbol{p}_n)_{n\geq 0}$ in $\Z^2$ such that
  \begin{gather}
    \label{eq:m-n-to-right}
    0\leq \langle \boldsymbol{p}_n,v\rangle \leq 1, \quad\forall
    n\geq 0, \\
    \label{eq:m-n-to-infty}
    \lim_{n\to+\infty}\scprod{\boldsymbol{p}_n}{v^\perp}=-\infty.
  \end{gather}

  By Proposition \ref{pro:Lambda-equivariant-properties} and
  \eqref{eq:m-n-to-right}, for such a sequence it holds
  \begin{equation}
    \label{eq:Tmn-Lambda-z-in-As+1}
    T_{\boldsymbol{p}_n}(\Lambda_z)\subset\Lambda_0^v(0)\cap\A_{s+1}^v,
    \quad\forall n\geq 0.
  \end{equation}

  Then, since $\tilde f$ preserves orientation and $\Lambda_z$ is
  connected and unbounded, as a consequence of \eqref{eq:m-n-to-infty}
  and \eqref{eq:Tmn-Lambda-z-in-As+1} we get that
  $\bigcup_{n\geq 0} T_{\boldsymbol{p}_n}(\Lambda_z)$ disconnects
  $\R^2$, and hence, $\bb{H}^v_{s+1}\subset\Lambda_0^v(0)$. By
  Corollary~\ref{cor:unif-bound-dev-iff-Hm-in-Lambda}, this implies
  $f$ exhibits uniformly bounded $v$-deviations, contradicting our
  hypothesis.
\end{proof}

\section{Directional deviations for periodic point free
  homeomorphisms}
\label{sec:dir-dev-ppf-homeos}

In this section we analyze the topological properties of stable sets
at infinity for non-wandering periodic point free homeomorphisms.

So, let $f\in\Homeo0(\T^2)$ be a non-wandering homeomorphism with
$\Per(f)=\emptyset$ and $\tilde f\colon\R^2\carr$ a lift of $f$. By
Theorem~\ref{thm:Handel-ppfree-rho-empty-int} we know that
$\rho(\tilde f)$ has empty interior. Thus, without any further
assumption, there are some $\alpha\in\R$ and $v\in\Ss^1$ such that
condition \eqref{eq:rot-set-in-line} holds.

Continuing with the notation we introduced in
\S~\ref{sec:induced-skew-prod}, first we will prove a density result:

\begin{theorem}
  \label{thm:translates-of-Lambdas-dense}
  For every $t\in\T^2$ the set
  \begin{displaymath}
    \bigcup_{r\geq 0}\Lambda_{-r}^v(t)
  \end{displaymath}
  is dense in $\R^2$.
\end{theorem}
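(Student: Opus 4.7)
My plan has three main steps: (1) reduce density in $\R^2$ to density of a projection in $\T^2$ using $\Z^2$-equivariance; (2) recognize the relevant projection as an $f$-invariant subset of $\T^2$ via the intertwining map associated to the skew product; (3) conclude density of this invariant subset through the Koropecki--Tal dichotomy.

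For step (1), Proposition~\ref{pro:Lambda-equivariant-properties}(iv) gives $T_{\boldsymbol p}(\Lambda_{-r}^v(t))=\Lambda_{-r+\langle\boldsymbol p,v\rangle}^v(t)$ for every $\boldsymbol p\in\Z^2$; this translate lies directly inside $\bigcup_{r\geq 0}\Lambda_{-r}^v(t)$ when $\langle\boldsymbol p,v\rangle\leq r$, and otherwise is contained in $\Lambda_0^v(t)\subset\bigcup_{r\geq 0}\Lambda_{-r}^v(t)$ by the nesting in (i). Hence the union is $\Z^2$-invariant, and density in $\R^2$ reduces to density of its $\pi$-image in $\T^2$.

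For step (2), I would set $\mathcal B:=\bigcup_{t'\in\T^2}\{t'\}\times\Lambda_0^v(t')$, which is $F$-invariant (since $F$ maps the fibered stable set at $t'$ to that at $t'+\rho$), and let $\Pi\colon\T^2\times\R^2\to\T^2$ be the intertwining map $(t',z)\mapsto\pi(z)+t'$ (conjugating $F$ to $f$). Then $\Pi(\mathcal B)$ is a non-empty (by Theorem~\ref{thm:Lambda-non-empty}) $f$-invariant subset of $\T^2$. Using Proposition~\ref{pro:Lambda-equivariant-properties}(iii), any $z\in\Lambda_0^v(t')$ can be transferred via $T_{\tilde{s}}$ (with $\tilde s$ a lift of $t'-t$ chosen so that $\langle\tilde s,v\rangle\leq 0$) into $\Lambda_{\langle\tilde s,v\rangle}^v(t)\subset\bigcup_{r\geq 0}\Lambda_{-r}^v(t)$. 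Hence density of $\Pi(\mathcal B)$ in $\T^2$ would imply density of $\bigcup_{r\geq 0}\Lambda_{-r}^v(t)$ in $\R^2$.

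For step (3), I would invoke the Koropecki--Tal dichotomy. If $f$ is annular, Corollary~\ref{cor:unif-bound-dev-iff-Hm-in-Lambda} gives $\bb H_{-r+M}^v\subset\Lambda_{-r}^v(t)$ for some constant $M$, and the union already equals all of $\R^2$. If $f$ is strictly toral, Propositions~\ref{pro:ppf-nonwand-every-point-ess} and~\ref{pro:stricly-toral-dynamics} (every point essential, each $U_r(x,f)$ fully essential) combined with the connectedness of $\wh{\Lambda_0^v(t')}$ containing $(t',\infty)$ should force $\Pi(\mathcal B)$ to be fully essential, and therefore dense, in $\T^2$. The main obstacle is exactly this last step in the strictly toral case: rigorously passing from the connectedness-to-$\infty$ structure of $\wh{\Lambda_0^v(t')}$ (which lives in the fibered one-point compactification $\T^2\times\wh{\R^2}$) to an essentiality property of $\Pi(\mathcal B)\subset\T^2$, so that the fully-essential hypothesis of Proposition~\ref{pro:stricly-toral-dynamics} can be invoked; I would expect this to require a careful analysis of how the unbounded connected components of $\Lambda_0^v(t')$ project through $\Pi$ and combine across varying $t'$.
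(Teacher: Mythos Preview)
Your Steps 1 and 2 are sound and lead to essentially the same object the paper studies: once you unwind $\Pi(\mathcal B)$ via Proposition~\ref{pro:Lambda-equivariant-properties}(iii) it is exactly $\pi\big(\bigcup_{r\geq 0}\Lambda_{-r}^v(0)\big)$, whose closure is the closed $f$-invariant set the paper calls $\Lambda$, and the question reduces to $\Lambda=\T^2$.

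The problems are in Step 3, and not quite where you locate them.

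In the strictly toral case your implication ``fully essential, and therefore dense'' is false (for instance $\T^2$ minus a small open disk is fully essential but not dense). The argument is easily repaired by running it the other way: if $\Lambda\neq\T^2$, pick a ball $B_r(y)$ in the open $f$-invariant complement; by Propositions~\ref{pro:ppf-nonwand-every-point-ess} and~\ref{pro:stricly-toral-dynamics} the set $U_r(y,f)$ is fully essential and lies in $\T^2\setminus\Lambda$, so $\Lambda$ is \emph{inessential}---contradicting the fact that $\pi^{-1}(\Lambda)$ has only unbounded connected components. So this case actually works, and more cleanly than the paper's treatment.

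The genuine gap is your annular case. Annularity gives uniformly bounded $v'$-deviations for some \emph{rational-slope} $v'$, which need not equal $\pm v$; Corollary~\ref{cor:unif-bound-dev-iff-Hm-in-Lambda} then says nothing about $\Lambda_r^v$. When $v'\neq\pm v$ one is forced into the pseudo-rotation regime, and your outline offers no mechanism to control the $v$-stable sets there. The paper does not use the annular/strictly-toral dichotomy at all: it first dispatches uniformly bounded $v$-deviations (for the given $v$) via Corollary~\ref{cor:unif-bound-dev-iff-Hm-in-Lambda}, and in the unbounded case analyzes components of $\T^2\setminus\Lambda$ directly---disks are excluded by Brouwer, annular components with direction parallel to $v$ by Proposition~\ref{pro:Lambda-v-unbounded}, and annular components with direction transverse to $v$ by passing to an iterate $\tilde g=T_{\boldsymbol p_0}^{-1}\circ\tilde f^{n_0}$, building its own centralized skew-product, and rerunning the construction of Theorem~\ref{thm:Lambda-non-empty} inside the lifted annulus. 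This last maneuver is precisely what is missing from your plan.
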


\begin{proof}
  By Corollary~\ref{cor:unif-bound-dev-iff-Hm-in-Lambda},
  Theorem~\ref{thm:translates-of-Lambdas-dense} clearly holds when $f$
  exhibits uniformly bounded $v$-deviations. So, from now on we shall
  suppose this is not the case.

  Consider the set
  \begin{displaymath}
    \tilde\Lambda:=\overline{\bigcup_{r\geq
        0}\Lambda_{-r}^v\big(\tilde f,0\big)}\subset\R^2 
  \end{displaymath}

  We claim that $\Lambda:=\pi(\tilde\Lambda)\subset\T^2$ is closed,
  $f$-invariant, and it holds $\tilde\Lambda=\pi^{-1}(\Lambda)$.
  
  In order to prove our claim, first observe that $\tilde\Lambda$ is
  $\Z^2$-invariant. In fact, this easily follows from
  \eqref{eq:Lambda-Z2-translations} of
  Proposition~\ref{pro:Lambda-equivariant-properties}, noticing
  \begin{displaymath}
    T_{\boldsymbol{p}}\Big(\Lambda_{-r}^v\big(\tilde f,0\big)\Big) =
    \Lambda_{\langle\boldsymbol{p},v\rangle-r}^v\big(\tilde f,0\big),
    \quad\forall r\geq 0,\ \forall\boldsymbol{p}\in\Z^2.  
  \end{displaymath}
  This immediately implies that $\Lambda$ is closed, too, and
  $\tilde\Lambda = \pi^{-1}(\Lambda)$.

  Secondly, we will prove $\tilde\Lambda$ is $\tilde f$-invariant. To
  do that, observe that for every $r\in\R$ and every point
  $z\in\Lambda_r^v(0)$, it holds
  \begin{displaymath}
    r\leq \scprod{\tilde f^n(z) - n\tilde\rho}{v} = \scprod{\tilde
      f^{n-1}\big(\tilde f(z)\big) - (n-1)\tilde\rho}{v} -
    \scprod{\tilde\rho}{v}, 
  \end{displaymath}
  for every $n\in\Z$.  This implies that
  \begin{displaymath}
    \tilde f(z)\in\Lambda^v_{r+\langle\tilde\rho,v\rangle}\big(\tilde
    f,0\big),
  \end{displaymath}
  and analogously one can show that
  \begin{displaymath}
    \tilde f^{-1}(z)\in\Lambda^v_{r-\langle\tilde\rho,v\rangle}\big(\tilde
    f,0\big).
  \end{displaymath}
  Thus, $\tilde\Lambda$ is totally $\tilde f$-invariant, and we finish
  the proof of our claim.

  Now, let us suppose $\tilde\Lambda\neq\R^2$. This implies that
  $\Lambda\neq\T^2$. First, observe that none connected component of
  $\Lambda$ can be inessential in $\T^2$ (see
  \S\ref{sec:surface-topology} for definitions). In fact, every
  connected component of the pre-image by $\pi\colon\R^2\to\T^2$ of a
  compact inessential set in $\T^2$ is bounded in $\R^2$, and by
  definition, every connected component of $\tilde\Lambda$ is
  unbounded.

  Henceforth, if $A$ denotes an arbitrary connected component of
  $\T^2\setminus\Lambda$, then $A$ is either a topological disk or
  annular (see \S\ref{sec:surface-topology} for definitions). Since
  $f$ is non-wandering and $A$ is a connected component of an open
  $f$-invariant set, there exists $n_0=n_0(A)\in\N$ such that
  $f^{n_0}(A)=A$.

  So, if $A$ were a topological disk, then
  $f^{n_0}\big|_A\colon A\carr$ would be conjugate to a plane
  orientation-preserving non-wandering homeomorphism, and by
  Theorem~\ref{thm:Brouwer}, we know that
  $\Fix\big(f^{n_0}\big|_A\big)\neq\emptyset$, contradicting our
  hypothesis that $f$ is periodic point free.

  Thus, $A$ should be annular, \ie an essential open topological
  annulus. Let $\tilde A$ be any connected component of
  $\pi^{-1}(A)\subset\R^2$. Since $A$ is annular, $\tilde A$ separates
  the plane in two different connected components, such that both of
  them are unbounded, and has a well-defined integral homological
  direction, \ie there exists a rational slope vector
  $v^\prime\in\Ss^1$ (which is unique up to multiplication by $(-1)$)
  and $s=s(\tilde A)>0$ such that
  \begin{displaymath}
    \tilde A\subset\A^{v'}_s. 
  \end{displaymath}

  Then we have to consider two possible cases: when $v$ and $v^\prime$
  are co-linear; and when they are linearly independent in $\R^2$.

  In the first case, let $z\in\Lambda_0^v(0)$ be an arbitrary point,
  $\Lambda_z$ be the connected component of $\Lambda_0^v(0)$
  containing $z$ and $\boldsymbol{p}\in\Z^2$ such that
  $T_{\boldsymbol{p}}(z)\in\bb{H}^v_{-s-1}$. By
  Proposition~\ref{pro:Lambda-equivariant-properties} we know that
  $T_{\boldsymbol{p}}(\Lambda_z)$ is a connected component of
  $\Lambda_{\langle\boldsymbol{p},v\rangle}^v(0)\subset\tilde\Lambda$,
  and since we are assuming $f$ does not exhibit uniformly bounded
  $v$-deviations, we can apply
  Proposition~\ref{pro:Lambda-v-unbounded} to conclude that
  $T_{\boldsymbol{p}}(\Lambda_z)$ is not contained in any
  $v$-strip. Hence, $T_{\boldsymbol{p}}(\Lambda_z)$ intersects both
  semi-spaces $\bb{H}_{s+1}^v$ and $\bb{H}^v_{-s-1}$. Now, since
  $\tilde A$ separates $\bb{H}_{s+1}^v$ and $\bb{H}^v_{-s-1}$, we
  conclude $T_{\boldsymbol{p}}(\Lambda_z)$ should intersect
  $\tilde A$, too. In particular
  $\tilde\Lambda\cap\tilde A\neq\emptyset$ and henceforth,
  $\Lambda\cap A\neq\emptyset$, contradicting our hypothesis that $A$
  was connected component of $\T^2\setminus\Lambda$.

  Let us analyze the second case, \ie when $v$ and $v^\prime$ are
  linear independent in $\R^2$. We know there exists $n_0\geq 1$ such
  that $f^{n_0}(A)=A$. Hence, there exists $\boldsymbol{p}_0\in\Z^2$
  such that $\tilde f^{n_0}(\tilde A)=T_{\boldsymbol{p}_0}(\tilde A)$.

  Let us consider the homeomorphism
  $\tilde g:=T_{\boldsymbol{p}_0}^{-1}\circ\tilde
  f^{n_0}\in\Thomeo(\T^2)$, and notice that $\tilde g$ is a lift of
  $f^{n_0}$ and $\tilde g(\tilde A)=\tilde A$. Then,
  $n_0\rho(\tilde f)-\boldsymbol{p}_0=\rho(\tilde
  g)\subset\ell_0^{v'}$, and since $v$ and $v'$ are not co-linear, we
  conclude that $\tilde f$ is pseudo-rotation, \ie
  $\rho(\tilde f)=\{\tilde\rho\}$.

  Then, let $G\colon\T^2\times\R^2\carr$ be the
  $(n_0\tilde\rho-\boldsymbol{p}_0)$-centralized skew-product induced
  by $\tilde g$, as defined at the beginning of
  \S\ref{sec:induced-skew-prod}. Since $\tilde g(\tilde A)=\tilde A$,
  it easily follows that the open set
  \begin{displaymath}
    \hat A:=\left\{(t,z) : t\in\T^2,\ z\in T_{\boldsymbol{p}-\tilde
        t}(\tilde A),\ \tilde t\in\pi^{-1}(t),\
      \boldsymbol{p}\in\Z^2\right\}\subset\T^2\times\R^2 
  \end{displaymath}
  is $G$-invariant. Now, we are supposing $v$ and $v^\prime$ are not
  co-linear, so we can repeat the argument used in the proof
  Theorem~\ref{thm:Lambda-non-empty} inside the set $\hat A$ to show
  that $\Lambda_r^v\big(\tilde g\big)\cap\hat
  A\neq\emptyset$. Moreover, it can be shown that
  $\Lambda_r^v\big(\tilde g,0\big)\cap\tilde A\neq\emptyset$. Finally
  observe that
  \begin{displaymath}
    \Lambda_r^v(\tilde g,0\big)\subset\Lambda_{r-M}^v\big(\tilde f,0),
  \end{displaymath}
  where $M:=n_0\sup_{z\in\R^2}\abs{\Delta_{\tilde f}(z)}$. In
  particular, we have shown that
  $\tilde\Lambda\cap\tilde A\neq\emptyset$, and thus,
  $\Lambda\cap A\neq\emptyset$, contradicting our hypothesis that $A$
  is a connected component of $\T^2\setminus\Lambda$.
\end{proof}

In Corollary~\ref{cor:unif-bound-dev-iff-Hm-in-Lambda} we have shown
that $f$ exhibits uniformly bounded $(\pm v)$-deviations if and only
if the fibered $(r,v)$-stable sets at infinity contain whole
semi-planes.

Here, we will improve this result for periodic point free systems:

\begin{theorem}
  \label{thm:unbounded-dev-implies-Lambda-empty-interior}
  Let $f\in\Homeo0(\T^2)$ be a periodic point free and non-wandering
  homeomorphism, $\tilde f\colon\R^2\carr$ a lift of $f$, and
  $v\in\Ss^1$ and $\alpha\in\R$ such that inclusion
  \eqref{eq:rot-set-in-line} holds. If the set $\Lambda_r^v(t)$ has
  non-empty interior for some (and hence, any) $t\in\T^2$ and
  $r\in\R$, then there exists $v' \in \Ss^1$ such that $f$ exhibits
  uniformly bounded $v'$-deviations.
\end{theorem}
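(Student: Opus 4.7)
The plan is to reformulate the hypothesis as a condition on the upper semi-continuous function
\begin{displaymath}
\psi(x) := \inf_{n\in\Z}\,\scprod{\Delta_{\tilde f}^{(n)}(x) - n\tilde\rho}{v}\in[-\infty, 0],\quad x\in\T^2
\end{displaymath}
(for a fixed $\tilde\rho\in\rho(\tilde f)$), construct an associated $f$-invariant open set $W\subset\T^2$ from it, and analyze the topology of $W$ following the strategy of the proof of Theorem~\ref{thm:translates-of-Lambdas-dense}.

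\textbf{Reformulation.} Identity \eqref{eq:Ht-displacement-function-any-t} gives $z\in\Lambda_r^v(t)\Leftrightarrow\scprod{z}{v}+\psi(t+\pi(z))\geq r$. Hence $\Lambda_r^v(t)$ has non-empty interior if and only if there exist an open $V_0\subset\T^2$ and a constant $M>0$ with $\psi\geq -M$ on $V_0$, i.e., $f$ exhibits uniformly bounded $(-v)$-deviations on the open set $V_0$.

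\textbf{Invariant set.} Define $W:=\{x\in\T^2:\psi\text{ is bounded below on some neighborhood of }x\}$; this set is open and contains $V_0$. A direct reindexing of the infimum defining $\psi$ yields the cocycle identity $\psi\circ f-\psi=\scprod{\tilde\rho-\Delta_{\tilde f}}{v}$, whose right-hand side is continuous and hence bounded on $\T^2$. Consequently $W$ is $f$-invariant.

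\textbf{Easy case.} If $W=\T^2$, compactness provides a uniform lower bound for $\psi$ on $\T^2$; thus $f$ has uniformly bounded $(-v)$-deviations, and Theorem~\ref{thm:v-bound-dev-iff--v-bound-dev} upgrades this to uniformly bounded $v$-deviations. We conclude with $v':=v$.

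\textbf{Hard case.} Otherwise $W$ is a proper, non-empty, open, $f$-invariant set. Arguing as in the proof of Theorem~\ref{thm:translates-of-Lambdas-dense}, Proposition~\ref{pro:ppf-nonwand-every-point-ess} combined with Brouwer's translation theorem~\ref{thm:Brouwer} rules out any connected component $A$ of $W$ being inessential (its filling would be an $f^{n_0}$-invariant topological disk forcing a periodic point) or fully essential (the filling of $\T^2\setminus A$ would produce such a disk similarly). Hence every connected component of $W$ is an essential topological annulus with a well-defined rational-slope direction $v'\in\Ss^1$.

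Fix such an annular component $A_0$, let $n_0\geq 1$ satisfy $f^{n_0}(A_0)=A_0$, choose a connected component $\tilde A_0$ of $\pi^{-1}(A_0)$ and $\boldsymbol{p}_0\in\Z^2$ with $\tilde f^{n_0}(\tilde A_0)=T_{\boldsymbol{p}_0}(\tilde A_0)$, and set $\tilde g:=T_{\boldsymbol{p}_0}^{-1}\circ\tilde f^{n_0}$. Then $\tilde g$ preserves $\tilde A_0$, which is contained in a strip of finite $v'$-width, forcing uniformly bounded $v'$-deviations for every $\tilde g$-orbit inside $\tilde A_0$; $\Z^2$-equivariance then propagates this uniform bound to all of $\R^2$. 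Consequently $f^{n_0}$, and therefore $f$, exhibits uniformly bounded $v'$-deviations.

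\textbf{Main obstacle.} The delicate part is the topological analysis of the components of $W$: one must verify that the Brouwer-type filling machinery used for closed invariant sets in Proposition~\ref{pro:ppf-nonwand-every-point-ess} and the proof of Theorem~\ref{thm:translates-of-Lambdas-dense} goes through for components of an open $f$-invariant set, and that the direction $v'$ extracted from an annular component is genuinely compatible with the ambient constraint $\rho(\tilde f)\subset\ell_\alpha^v$---reducing, when $v$ and $v'$ are linearly independent, to the pseudo-rotation situation treated in the second part of that proof.
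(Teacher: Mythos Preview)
Your approach is genuinely different from the paper's. The paper uses the annular/strictly toral dichotomy of Koropecki--Tal: if $f$ is annular the conclusion is immediate, and otherwise $f$ is strictly toral, so by Propositions~\ref{pro:ppf-nonwand-every-point-ess} and~\ref{pro:stricly-toral-dynamics} the set $U_\varepsilon(\pi(x),f)$ is fully essential for any interior point $x$ of $\Lambda_r^v(0)$. Two generating loops inside $U_\varepsilon(\pi(x),f)$ are then covered by finitely many $f$-iterates of $\pi(B_\varepsilon(x))$ and lifted to a bi-infinite separating curve lying in some $\Lambda_{r'}^v(0)$, which traps a half-plane; Corollary~\ref{cor:unif-bound-dev-iff-Hm-in-Lambda} then gives uniformly bounded $v$-deviations (so in fact $v'=v$ whenever $f$ is not annular).

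Your reformulation via $\psi$ and the $f$-invariant open set $W$ is correct, and the case $W=\T^2$ is fine. The real gap is your handling of the fully essential case. The Brouwer argument on ``the filling of $\T^2\setminus A$'' does not work: $\T^2\setminus A$ is a \emph{closed} inessential set whose components need not be $f$-periodic, and even a periodic component has no reason to bound an invariant open disk to which Theorem~\ref{thm:Brouwer} applies. In fact this case cannot be excluded---it is precisely the case that yields the conclusion. If some component of $W$ is fully essential then (since inessential components are already ruled out) $W$ itself is fully essential; choose generating loops inside $W$, on which $\psi$ is uniformly bounded below by compactness, and lift them to a bi-infinite separating curve exactly as in the paper's argument to obtain a half-plane inside some $\Lambda_{r'}^v(0)$, hence bounded $v$-deviations with $v'=v$. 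The remaining possibility (every component essential but not fully essential) then proceeds as you sketch, modulo first filling each component's inessential complementary pieces so that it becomes genuinely annular; after that, the $\Z^2$-translates of $\tilde A_0$ are pairwise disjoint $\tilde g$-invariant strips whose closures separate the plane, and any point lies between two consecutive translates, giving the global bound on $v'$-deviations.
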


\begin{proof}
  If $f$ is an annular homeomorphism, (see
  \S\ref{sec:annul-strictly-toral} for the definition), then the
  conclusion automatically holds. So, we can assume $f$ is
  non-annular.

  On the other hand, since $f$ is periodic point free, it clearly
  holds $\Fix(f^k)=\emptyset$, for every $k\in\Z\setminus\{0\}$. So,
  according to the classification given in
  \S~\ref{sec:annul-strictly-toral}, we can assume $f$ is a strictly
  toral homeomorphism,

  Now, since $f$ is non-wandering and periodic point free, by
  Proposition~\ref{pro:ppf-nonwand-every-point-ess} we know that every
  point of $\T^2$ is essential for $f$.
  
  Let us suppose $\Lambda_r^v(0)$ has non-empty interior. Let $x$ be
  a point in interior of $\Lambda_r^v(0)$, and $\varepsilon>0$ such
  that the ball
  $B_{\varepsilon}(x)\subset
  \mathrm{int}\big(\Lambda_r^v(0)\big)$. Since $\pi(x)$ is an
  essential point for $f$ and we are assuming $f$ is strictly toral,
  by Proposition~\ref{pro:stricly-toral-dynamics} the open set
  $U_{\varepsilon}(\pi(x),f)$ given by \eqref{eq:Ur-set-def} is fully
  essential.

  So, there are simple closed curves
  $\gamma_1,\gamma_2\colon [0,1]\to\T^2$ whose images are contained in
  $U_{\varepsilon}(\pi(x),f)$ and such that they generate the
  fundamental group of $\T^2$. Since $\gamma_1$ and $ \gamma_2$ are
  compact, there exists $m\in\N$ such that
  \begin{displaymath}
    \gamma_1[0,1]\cup\gamma_2[0,1]\subset \bigcup_{j=-m}^m
    f^j\Big(\pi\big(B_\varepsilon(x)\big)\Big). 
  \end{displaymath}

  Now, recalling that
  $\tilde f^n(\Lambda_r^v(0)) \subset \Lambda_{r+n\langle \tilde \rho,
    v \rangle}^v(0)$ for every $n\in\Z$ and the covering
  $\pi\colon\R^2\to\T^2$ is an open map, we conclude that
  \begin{displaymath}
    \gamma_1[0,1]\cup\gamma_2[0,1]\subset 
    \mathrm{int}\big(\pi(\Lambda_{r-m\abs{\scprod{\tilde\rho}{v}}})(0)
    \big).
  \end{displaymath}

  So, we can construct a bi-sequence of simple curves
  $\big(\tilde\gamma^n\colon [0,1]\to\R^2\big)_{n\in\Z}$ satisfying
  the following properties:
  \begin{enumerate}
  \item
    $\tilde\gamma^n(t)\in\Lambda_{r-m\abs{\scprod{\tilde\rho}{v}}}(0)$,
    for every $n\in\Z$ and all $t\in [0,1]$;
  \item $\tilde\gamma^n$ is a lift either of $\gamma_1$ or $\gamma_2$,
    for each $n\in\Z$;
  \item $\tilde\gamma^n(1)=\tilde\gamma^{n+1}(0)$, for every $n\in\Z$;
  \item there exists a constant $M>0$ such
    \begin{displaymath}
      d(\tilde\gamma^n(t),\ell^v_{r-m\abs{\scprod{\tilde\rho}{v}}})\leq
      M, \quad\forall n\in\Z,\ \forall t\in [0,1].
    \end{displaymath}
  \end{enumerate}

  Then, the set $\bigcup_{n}\tilde\gamma^n[0,1]$ clearly separates the
  plane, and its complement has exactly two unbounded connected
  components. In particular, this implies
  $\Lambda_{r-m\abs{\scprod{\tilde\rho}{v}}}^v$ contains a
  semi-plane, and by
  Corollary~\ref{cor:unif-bound-dev-iff-Hm-in-Lambda}, this implies
  $f$ exhibits uniformly bounded $v$-deviations.
\end{proof}

\section{Torus pseudo-foliations and rotational deviations}
\label{sec:tor-foliations}

In this section we introduce the concepts of \emph{pseudo-foliation}
which is a generalization of singularity-free one-dimensioanl plane
foliation.

A \emph{plane pseudo-foliation} is a partition $\F$ of $\R^2$ such
that every atom (also called \emph{pseudo-leaf}) is closed, connected,
has empty interior and separates the plane in exactly two connected
components.

Given a map $h\colon\R^2\carr$, we say that the plane pseudo-foliation
$\F$ is $h$-invariant when
\begin{displaymath}
  h(\F_z)=\F_{h(z)},\quad\forall z\in\R^2.
\end{displaymath}
where $\F_z$ denotes the atom of $\F$ containing $z$; and $\F$ is said
to be $\Z^2$-invariant when it is $T_{\boldsymbol{p}}$-invariant for
any $\boldsymbol{p}\in\Z^2$. 

A \emph{torus pseudo-foliation} is a partition $\F$ of $\T^2$ such
that there exists a $\Z^2$-invariant plane pseudo-foliation $\tilde\F$
satisfying
\begin{displaymath}
  \pi(\tilde\F_z)=\F_{\pi(z)},\quad\forall z\in\R^2.
\end{displaymath}
In such a case, the plane pseudo-foliation $\tilde\F$ is unique and
will be called the \emph{lift} of $\F$.

Notice that a homeomorphism $f\colon\T^2\carr$ leaves invariant a
torus pseudo-foliation $\tilde\F$ if and only if its lift $\tilde\F$
is $\tilde f$-invariant, for any lift $\tilde f\colon\R^2\carr$ of $f$.

Some geometric and topological properties of ``classical'' plane and
torus foliations can be extended to pseudo-foliations:

\begin{proposition}
  \label{prop:plane-pseudo-fol-top-geo-prop}
  If $\F$ is a plane pseudo-foliation, both connected components of
  $\R^2\setminus\F_z$ are unbounded, for every $z\in\R^2$.
\end{proposition}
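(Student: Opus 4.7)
The plan is to argue by contradiction. Assume that $\R^2\setminus\F_z$ admits a bounded connected component $U$; then the other component $V$ is unbounded and $\bar U$ is compact. The strategy is to equip the pseudo-leaves lying inside $U$ with a natural partial order, show that it has no minimum but satisfies the hypotheses of Zorn's lemma, and derive a contradiction.

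The main geometric input is the following claim: for every $w\in U$, the pseudo-leaf $\F_w$ is entirely contained in $U$, and the bounded component $I_w$ of $\R^2\setminus\F_w$ satisfies $\bar{I_w}\subset U$. The containment $\F_w\subset U$ follows because $\F_w$ is connected and disjoint from $\F_z$, hence lies in whichever component of $\R^2\setminus\F_z$ contains $w$; in particular $\F_w$ is bounded, and $\R^2\setminus\F_w$ splits into a bounded piece $I_w$ and an unbounded piece $O_w$. The key step is to show that $\F_z\subset O_w$: the set $V$ is connected, unbounded, and disjoint from $\F_w$, hence $V\subset O_w$; and any $x\in\F_z\subset\partial V$ is a limit of points of $V\subset O_w$ not belonging to $\F_w$, forcing $x\in O_w$. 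Then $I_w$ is connected, bounded, and disjoint from $\F_z$, meeting $U$ in a neighborhood of $\F_w$, and therefore $I_w\subset U$.

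The same reasoning applies with $\F_b$ and $I_b$ in place of $\F_z$ and $U$, yielding an iterated version: whenever $\F_b\subset U$ is a pseudo-leaf with bounded interior $I_b$ and $a\in I_b$, then $\bar{I_a}\subset I_b$. The only new point is to exclude the alternative $\F_b\subset I_a$, for which I would use an auxiliary lemma asserting that two disjoint pseudo-leaves cannot each lie in the bounded complementary component of the other. Mutual containment would force $O_a=O_b$ by the usual connectedness/unboundedness argument, hence $\bar{I_a}=\bar{I_b}$; the empty-interior hypothesis on atoms then gives $\mathrm{int}(\bar{I_a})=I_a$ and $\mathrm{int}(\bar{I_b})=I_b$, so $I_a=I_b$ and the set $\bar{I_a}\setminus I_a$ is contained in $\F_a\cap\F_b=\emptyset$, making $I_a$ a nontrivial clopen subset of $\R^2$, an absurdity.

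With these preliminaries in hand, I would equip the family $\mathcal{A}$ of pseudo-leaves contained in $U$ with the partial order defined by $\F_a\preceq\F_b$ if and only if $\bar{I_a}\subset\bar{I_b}$. The iterated claim shows that $\mathcal{A}$ has no $\preceq$-minimum, since for any $\F_b\in\mathcal{A}$ picking $a\in I_b$ gives $\bar{I_a}\subset I_b\subsetneq\bar{I_b}$, so $\F_a\prec\F_b$ strictly. Conversely, every $\preceq$-chain $(\F_{w_\alpha})$ yields a nested family of nonempty compact sets in $\bar U$, whose intersection $K$ is nonempty by the finite intersection property; for any $p\in K$, applying the iterated claim along the chain shows $\bar{I_p}\subset I_{w_\alpha}$ for every $\alpha$, so $\F_p$ is a lower bound. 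Zorn's lemma then produces a $\preceq$-minimal element, contradicting the previous observation. The step requiring the most care is the claim $\F_z\subset O_w$, together with its iterated form and the mutual-containment lemma: it is there that both the unboundedness of $V$ and the empty-interior hypothesis on pseudo-leaves play an essential role.
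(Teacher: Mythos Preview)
Your overall strategy coincides with the paper's: argue by contradiction, observe that every pseudo-leaf meeting the bounded component $U$ stays inside $U$ and has its own bounded complementary piece contained in $U$, partially order these bounded pieces by inclusion, and use Zorn's lemma to produce a minimal element that cannot exist. The paper's write-up is terser (it only checks countable chains and asserts $B_w\subset B_0$ without detail), so your added care about arbitrary chains and about the containment $\bar{I_w}\subset U$ is welcome.

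There is, however, a genuine gap in your auxiliary ``mutual-containment'' lemma. The step ``the empty-interior hypothesis on atoms then gives $\mathrm{int}(\bar{I_a})=I_a$'' is false in general: take $\F_a$ to be the unit circle together with the radius $\{(x,0):0\le x\le 1\}$. This is closed, connected, has empty interior, and its complement has exactly two components; the bounded one $I_a$ is the open disk minus the radius, yet $\bar{I_a}$ is the full closed disk and $\mathrm{int}(\bar{I_a})$ is the full open disk, strictly larger than $I_a$. So the deduction $I_a=I_b$ from $\bar{I_a}=\bar{I_b}$ does not go through.

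The fix is to bypass this lemma entirely. For the iterated claim (from $a\in I_b$ conclude $\bar{I_a}\subset I_b$), argue directly: since $\F_a\subset I_b$, the set $\F_b\cup O_b$ is closed, connected (because $\partial O_b\subset\F_b$), unbounded, and disjoint from $\F_a$, hence contained in $O_a$; taking complements gives $I_a\cup\F_a\subset I_b$, so $\bar{I_a}\subset I_b$. This also immediately rules out mutual containment, since $\F_a\subset I_b$ forces $\F_b\subset O_a$, contradicting $\F_b\subset I_a$. With this replacement your argument is complete and matches the paper's.
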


\begin{proof}
  This easily follows from Zorn's Lemma. In fact, let us suppose there
  exists $z_0\in\R^2$ such that a connected component of
  $\R^2\setminus\F_{z_0}$, called $B_0$, is bounded. So, for every
  $w\in B_0$, $\F_w\subset B_0$ and thus, it is bounded itself. This
  implies that, for all $w\in B_0$, there exists a bounded connected
  component, called $B_w$, of $\R^2\setminus\F_w$, and it clearly
  holds $B_w\subset B_0$, for any $w\in B_0$. 
  
  Now, if we consider the set $\{B_w : w\in B_0 \}$ endowed with the
  partial order given by set inclusion, one can easily check that any
  totally decreasing chain admits a lower bound, and consequently,
  there exists a minimal element. In fact, given a strictly decreasing
  sequence $B_{w_1}\supset B_{w_2}\supset\ldots$ we have that
  $\partial B_{w_n}\cap\partial B_{w_{n+1}}=\emptyset $ because
  different pseudo-leaves are disjoint, and therefore, it holds
  $\bigcap_{n\geq 1} B_{w_n}=\bigcap_{n\geq
    1}\overline{B_{w_n}}\neq\emptyset$. Now, taking any point
  $w\in \bigcap_{n\geq 1} B_{w_n}$, $B_{w}$ happens to be a lower
  bound for the chain. So, by Zorn's Lemma there exists a minimal
  element $B_z$ but of course, $B_w\subsetneq B_z$, for any
  $w\in B_z$, getting a contradiction.
\end{proof}

In order to show that torus pseudo-foliations exhibit some properties
similar to classical foliations, we will use a geometric result due to
Koropecki and Tal~\cite {KoroTalBoundUnbound}. To do that, first we
need to introduce some terminology.

Given a closed discrete set $\Sigma\subset\R^2$, a subset
$U \subset \R^2$ is said to be \emph{$\Sigma$-free} when
$T_{\boldsymbol{p}}(U)\cap U = \emptyset$, for all
$\boldsymbol{p} \in \Sigma$.

A \emph{chain} is a decreasing sequence $\mathscr{C}=(U_n)_{n\in\N}$
of arcwise connected subsets of $\R^2$, \ie $U_{n+1} \subset U_n$, for
all $n\in\N$. We say that $\mathscr C$ is \emph{eventually
  $\Sigma$-free} if for every $\boldsymbol{p}\in\Sigma$, there is
$n\in\N$ such that $T_{\boldsymbol{p}}(U_n)\cap U_n= \emptyset$.

Let us write $\Z_*^2:=\Z^2\setminus\{(0,0)\}$ and recall the following
result of \cite[Theorem 3.2]{KoroTalBoundUnbound}:
\begin{theorem}
  \label{thm:free-chain}
  If $\mathscr C=(U_n)_{n \in \N}$ is an eventually $\Z^2_*$-free
  chain of arcwise connected sets, then one of the following holds:
  \begin{enumerate}[(i)]
  \item\label{eq:chain-almost-free} there exist $n \in \N$ and
    $\boldsymbol{q}\in \Z^2$ such that $U_n$ is
    $\Z^2\setminus(\R\boldsymbol{q})$-free;
  \item\label{eq:chain-one-point-infty} there is a unique
    $v \in \Ss^1$ such that
    $\bigcap_{n \in \N} \partial_\infty U_n=\{v\}$, where
    $\partial_\infty$ denotes the boundary at infinity defined in
    \S\ref{sec:boundary-at-infinity};
  \item\label{eq:chain-separate-annulus} there are $v\in\Ss^1$ and $r>0$ such that
    $\bigcap_{n \in \N} \overline{U_n}$ is contained in the strip
    $\A^v_r$ and it separates the boundary components of $\A_r^v$.
  \end{enumerate}
\end{theorem}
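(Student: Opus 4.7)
The proof naturally splits by analysing two invariants attached to the chain. The first is the decreasing sequence $S_n:=\{\boldsymbol{p}\in\Z^2_*:T_{\boldsymbol{p}}(U_n)\cap U_n\neq\emptyset\}$, which satisfies $\bigcap_n S_n=\emptyset$ by the eventually $\Z^2_*$-free hypothesis. The second is the decreasing family $\partial_\infty U_n\subseteq\Ss^1$, with limit $K:=\bigcap_n\partial_\infty U_n$. I would first dispose of the case where some $U_n$ is bounded: only finitely many $\boldsymbol{p}\in\Z^2_*$ can then lie in $S_n$, so going far enough down the chain removes all of them and yields alternative \eqref{eq:chain-almost-free} with arbitrary $\boldsymbol{q}$. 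From now on every $U_n$ is unbounded, so each $\partial_\infty U_n$ is non-empty and closed, and so is $K$.

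If $K=\{v\}$ is a singleton, alternative \eqref{eq:chain-one-point-infty} holds directly. Assume instead $|K|\geq 2$, and also that \eqref{eq:chain-almost-free} fails; the task is then to deduce \eqref{eq:chain-separate-annulus}. Since no $S_n$ is contained in a line through the origin, for each $n$ I may select linearly independent vectors $\boldsymbol{p}_n,\boldsymbol{q}_n\in S_n$; choosing them of minimal norm and invoking $\bigcap_n S_n=\emptyset$ forces $\|\boldsymbol{p}_n\|,\|\boldsymbol{q}_n\|\to\infty$, and after passing to subsequences their normalisations converge to linearly independent unit vectors $u_1,u_2$. Arcwise connectedness of $U_n$, applied to the pairs of points witnessing $\boldsymbol{p}_n,\boldsymbol{q}_n\in S_n$, yields arcs inside $U_n$ whose endpoints differ by $\boldsymbol{p}_n$ and $\boldsymbol{q}_n$ respectively, so $U_n$ contains arcs of diameter tending to infinity along two linearly independent directions.

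The decisive step, and the main obstacle, is extracting the strip structure of \eqref{eq:chain-separate-annulus} from this data. The strategy I would pursue is to fix one limiting direction, say $v:=u_1^\perp$, and exploit the tension between two facts: on the one hand, arcs of large net displacement $\boldsymbol{p}_n$ sitting inside $U_n$ force $U_n$ to be ``long'' in the direction of $u_1$; on the other hand, for every sufficiently large $k\in\Z$ the translates $T_{k u_1^\perp\cdot\boldsymbol{q}_n}(U_n)$ eventually cease to meet $U_n$ by the eventually free hypothesis, preventing $U_n$ from spreading too far in the $v$-direction without generating forbidden self-intersections. A planar-separation argument in the flavour of the Jordan curve theorem, carried out in the annular cover of $\T^2$ associated with $v$, should then squeeze $\bigcap_n\overline{U_n}$ into a strip $\A^v_r$ of some finite width and, via the arcwise connected unbounded pieces of each $U_n$ of direction $\pm u_1$, produce a connected subset separating the boundaries of $\A^v_r$. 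The delicate points will be choosing $v$ correctly from the data $(K,u_1,u_2)$, controlling the width $r$ uniformly in $n$, and verifying the separation property in the limit; I expect this last planar topology step to be by far the most involved part of the argument.
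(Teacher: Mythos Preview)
The paper does not prove this theorem: it is quoted as \cite[Theorem~3.2]{KoroTalBoundUnbound} and used as a black box in the proof of Theorem~\ref{thm:pseudo-fol-top-geo-prop}. There is therefore no in-paper argument to compare your proposal against.

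Regarding the sketch itself, the overall trichotomy (some $U_n$ bounded; $K$ a singleton; $|K|\ge 2$ with \eqref{eq:chain-almost-free} failing) is the natural one, but the treatment of the last case has genuine gaps. First, choosing linearly independent $\boldsymbol{p}_n,\boldsymbol{q}_n\in S_n$ of minimal norm does not guarantee that their normalisations subconverge to linearly independent $u_1,u_2$: take $\boldsymbol{p}_n=(n,1)$ and $\boldsymbol{q}_n=(n,2)$, which are independent for every $n$ yet both limit to $(1,0)$. Second, and more seriously, the direction $v$ appearing in alternative \eqref{eq:chain-separate-annulus} is governed by $K=\bigcap_n\partial_\infty U_n$ (indeed, if \eqref{eq:chain-separate-annulus} holds then $K=\{v^\perp,-v^\perp\}$), whereas your candidate $v=u_1^\perp$ comes from asymptotic directions of surviving translations in $S_n$; these are \emph{a priori} unrelated quantities, and nothing in the outline links them. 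Third --- as you yourself flag --- the step that actually confines $\bigcap_n\overline{U_n}$ to a strip of \emph{uniform} width and forces it to separate the two boundary lines is left open. In the original Koropecki--Tal argument this is where essentially all the work is concentrated, and it does not reduce to a straightforward Jordan-curve or annular-cover argument; if you want to reconstruct a proof, that is the place to focus.
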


Notice that, if property \eqref{eq:chain-separate-annulus} holds,
then we have $\bigcap_{n \in \N} \partial_\infty U_n=\{-v,v\}$.

Now we will use Theorem~\ref{thm:free-chain} to prove our main result
about torus pseudo-foliations:

\begin{theorem}
  \label{thm:pseudo-fol-top-geo-prop}
  If $\F$ is a $\Z^2$-invariant pseudo-foliation, then there exist
  $v\in\Ss^1$ and $r>0$ such that
  \begin{equation}
    \label{eq:pseudo-leaf-in-strip}
    \F_z\subset T_z(\A_r^v),\quad\forall z\in\R^2.
  \end{equation}
  In such a case, $v$ is unique up to multiplication by $(-1)$ and we
  say that $v^\perp$ is an \emph{asymptotic direction} of $\F$.
	
  Moreover, there is $w \in \R^2$ such that $\F_w$ separates the
  boundary components of the strip $T_w(\A_r^v)$. 
\end{theorem}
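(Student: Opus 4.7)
The strategy is to apply the free chain theorem (Theorem~\ref{thm:free-chain}) to an eventually $\Z_*^2$-free chain of open arcwise connected neighborhoods shrinking down to a chosen pseudo-leaf $L=\F_0$, and to read off the asymptotic direction $v$ from the three possible outputs. Two preliminary observations about $L$ will be used throughout: by Proposition~\ref{prop:plane-pseudo-fol-top-geo-prop} both complement components $L^+,L^-$ of $\R^2\setminus L$ are unbounded, so $L$ itself is unbounded; and $L$ approaches infinity in at least two distinct directions, i.e.\ $\#\partial_\infty L\geq 2$, because in $\wh{\R^2}\simeq\Ss^2$ the compact connected set $L\cup\{\infty\}$ separates the sphere into the two open components $L^+,L^-$, which would be impossible if $L$ reached $\infty$ from only one direction.

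To build the chain, let $\Sigma:=\{\boldsymbol{p}\in\Z^2:T_{\boldsymbol{p}}(L)=L\}$ be the stabilizer of $L$ and define $U_n$ as the connected component containing $L$ of the intersection of the $1/n$-neighborhood of $L$ with $\R^2\setminus\bigcup T_{\boldsymbol{p}}(L)$, the union ranging over $\boldsymbol{p}\in\Z^2$ with $0<\norm{\boldsymbol{p}}\leq n$ and $\boldsymbol{p}\notin\Sigma$. Each $U_n$ is open and arcwise connected, $(U_n)$ is decreasing, and $\bigcap_n U_n=L$. The main technical step is to make $(U_n)$ eventually $\Z_*^2$-free: freeness against $\boldsymbol{p}\notin\Sigma$ is built into the construction, but freeness against $\boldsymbol{p}\in\Sigma\setminus\{0\}$ is obstructed by the $\Sigma$-invariance of $L$. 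To handle this one first rules out $\mathrm{rank}(\Sigma)=2$---were $\Sigma$ of rank $2$, $L$ would descend to a compact connected proper subset of the quotient torus $\R^2/\Sigma$ whose complement has two components, a configuration incompatible with both $L^\pm$ being unbounded in $\R^2$---and then, in the cyclic case $\Sigma=\Z\boldsymbol{q}_0$, one replaces the $1/n$-neighborhood of $L$ with a neighborhood of a fundamental slab of $L$ transverse to $\boldsymbol{q}_0$, breaking the $\boldsymbol{q}_0$-periodicity that obstructed freeness. This stabilizer analysis is the most delicate part of the argument.

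Theorem~\ref{thm:free-chain} then yields three cases. Case (ii) is immediately ruled out: if $\bigcap_n\partial_\infty U_n=\{v\}$ then $\partial_\infty L\subset\{v\}$ because $L\subset U_n$ for every $n$, contradicting $\#\partial_\infty L\geq 2$. In case (iii), $L\subset\bigcap_n\overline{U_n}\subset\A_r^v$ and $\bigcap_n\overline{U_n}=L$ separates the boundary lines of $\A_r^v$, which delivers simultaneously the strip containment and the ``moreover'' part with $w=0$. In case (i), where $U_n$ is $\Z^2\setminus\R\boldsymbol{q}$-free for some $\boldsymbol{q}$, the projection of $U_n$ to the quotient cylinder $\R^2/\Z\boldsymbol{q}_0$ (with $\boldsymbol{q}_0$ primitive in $\R\boldsymbol{q}\cap\Z^2$) embeds injectively into $\T^2$; combined with $L\subset U_n$ separating $\R^2$ into two unbounded components, this forces $U_n$ into a strip $\A_r^v$ with $v=\boldsymbol{q}^\perp/\norm{\boldsymbol{q}}$, whence $L\subset\A_r^v$.

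Uniqueness of $v$ up to sign follows from a crossing argument: any second pseudo-leaf $\F_{z_0}$, by the same reasoning, sits in a strip of some asymptotic direction $\pm v_0^\perp$; if $v_0\neq\pm v$, the antipodal pair $\pm v_0^\perp$ falls one in each of the two open arcs of $\Ss^1\setminus\{\pm v^\perp\}$, which are the asymptotic cones of $L^+$ and $L^-$ respectively, so the connected set $\F_{z_0}$---contained in a single one of $L^\pm$ by disjointness from $L$---would have to accumulate at infinity in both arcs, a contradiction. A uniform width $r$ valid for every pseudo-leaf then follows from $\Z^2$-invariance together with compactness of $\T^2$, yielding $\F_z\subset T_z(\A_r^v)$ for every $z\in\R^2$. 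Finally, if case (iii) was not realized for our chosen $L$, a limiting argument along a sequence of pseudo-leaves whose transverse extents approach the supremum of widths produces the separating pseudo-leaf $\F_w$ claimed in the ``moreover''.
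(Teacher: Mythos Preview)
Your argument contains a genuine gap at its very first step: the claim that $\#\partial_\infty L\geq 2$ for every pseudo-leaf $L$ is false. The parabola $L=\{(x,x^2):x\in\R\}$ is closed, connected, has empty interior, and $\R^2\setminus L$ consists of two unbounded components; yet $\partial_\infty L=\{(0,1)\}$ is a single point. The fact that $L\cup\{\infty\}$ separates $\Ss^2$ in no way forces $L$ to reach $\infty$ from two distinct directions---what matters is the local topology of the approach to $\infty$, not the set of radial accumulation directions. Since this claim is your only weapon against case~\eqref{eq:chain-one-point-infty} of Theorem~\ref{thm:free-chain}, that case is left entirely unhandled, and a priori nothing prevents an individual pseudo-leaf of a $\Z^2$-invariant pseudo-foliation from being parabola-shaped.

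This is precisely where the substance of the proof lies, and the paper's argument is quite different from yours. It does \emph{not} exclude case~\eqref{eq:chain-one-point-infty} leaf by leaf; instead it supposes the singleton-boundary alternative holds for \emph{every} pseudo-leaf and derives a global contradiction from $\Z^2$-invariance. For each $z$ it selects the complementary component $\Omega_z$ with $\partial_\infty\Omega_z=\{v_z\}$, forms the set $\mathcal{M}$ of points belonging to no $\Omega_z$, and shows that on the nonempty open connected $\Z^2$-invariant set $\Xi=\R^2\setminus\bigcup_{w\in\mathcal{M}}\overline{\Omega_w}$ the relation ``lie in a common $\Omega_z$'' is an open equivalence relation with a single class. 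This forces some $\Omega_z$ to contain $w$, $T_{\boldsymbol{p}}(w)$ and $T_{\boldsymbol{p}^\perp}(w)$ simultaneously, contradicting $\partial_\infty\Omega_z$ being a singleton.

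There is also a secondary issue: your assertion that ``freeness against $\boldsymbol{p}\notin\Sigma$ is built into the construction'' is not justified. Removing $T_{\boldsymbol{p}}(L)$ from the chain only guarantees $U_n\cap T_{\boldsymbol{p}}(L)=\emptyset$, which forces $U_n\subset T_{\boldsymbol{p}}(L)^-$ and $T_{\boldsymbol{p}}(U_n)\subset L^+$, but both sets can still overlap inside the region between $L$ and $T_{\boldsymbol{p}}(L)$; when $d\big(L,T_{\boldsymbol{p}}(L)\big)=0$ this need not disappear as $n\to\infty$. The paper sidesteps this entirely: it uses the plain $1/n$-neighborhoods, and when the resulting chain fails to be eventually $\Z^2_*$-free for some $\boldsymbol{p}$, that very $\boldsymbol{p}$ furnishes the strip direction directly.
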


\begin{proof}
  Let $z$ denote an arbitrary point of $\R^2$ and, for each $n\in\N$,
  consider the set
  \begin{displaymath}
    U_n(z):=\left\{x\in\R^2 : d(x,\F_z)<\frac{1}{n}\right\}.
  \end{displaymath}
  Notice all the sets $U_n(z)$ are arcwise connected,
  $U_{n+1}(z) \subset U_n(z)$ and it holds
  $\F_z=\bigcap_{n\in\N} U_n(z)$.

  First, let us suppose that there exists $z\in\R^2$ such that the
  chain $\big(U_n(z)\big)_{n \in \N}$ is not eventually
  $\Z^2_*$-free. So, there is $\boldsymbol{p}\in\Z^2_*$, such that
  $T_{\boldsymbol{p}}\big(U_n(z)\big)\cap U_n(z)\neq\emptyset$, for
  every $n\in\N$. By
  Proposition~\ref{prop:plane-pseudo-fol-top-geo-prop}, each
  pseudo-leaf of $\F$ is unbounded in $\R^2$. So, this implies that
  for $n$ sufficiently big,
  $T_{\boldsymbol{p}^\perp}\big(U_n(z)\big)\cap U_n(z) = \emptyset$. Let us
  fix such an $n$, and consider the set
  \begin{displaymath}
    \Theta:=\bigcup_{j\in\Z} T_{\boldsymbol{p}}^j\big(U_n(z)\big).
  \end{displaymath}
  Then $\Theta$ is open, connected, $T_{\boldsymbol{p}}$-invariant and
  $\Theta\cap T_{\boldsymbol{p}^\perp}(\Theta)=\emptyset$. In
  particular, it is contained in the lift of an annular subset of
  $\T^2$, and this implies there exists $r>0$ such that
  $\F_z\subset\A_r^v$. The uniform result for every pseudo-leaf easily
  follows from the fact that
  $\Theta\cap T_{\boldsymbol{p}^\perp}(\Theta)=\emptyset$ and the
  definition of plane pseudo-foliations.

  So, now we can assume that $(U_n(z))_{n \in \N}$ is eventually
  $\Z^2_*$-free for every $z\in\R^2$, and hence,
  Theorem~\ref{thm:free-chain} can be applied.

  First suppose property \eqref{eq:chain-almost-free} of
  Theorem~\ref{thm:free-chain} holds for some $z\in\R^2$, \ie there
  are $n_0\in\N$ and $\boldsymbol{q}\in\Z^2$ such that $U_{n_0}(z)$ is
  $\Z^2\setminus(\R\boldsymbol{q})$-free. Since
  $U_n(z) \subset U_{n_0}(z)$ for $n>n_0$, the same holds for every
  $n>n_0$. Without loss of generality we can assume $\boldsymbol{q}$
  generates the cyclic group $\Z^2\cap \R\boldsymbol{q}$. Then, since
  the chain $(U_n(z))_{n \in \N}$ is eventually $\Z^2_*$-free, there is $n' \in \N$ such that
  $T_{\boldsymbol{q}}\big(U_{n'}(z)\big)\cap U_{n'}(z) = \emptyset$.

  By Theorem~\ref{thm:Franks-free-disk}, we have
  \begin{displaymath}
    T_{\boldsymbol{q}}^j\big(U_{n'}(z))\cap U_{n'}(z) = \emptyset,
    \quad\forall j\in\Z\setminus\{0\}. 
  \end{displaymath}
  So, given any integer $n\geq\max\{n_0,n'\}$, we conclude the chain
  $\big(U_n(z)\big)_{n\geq 1}$ is $\Z^2_*$-free.

  On the other hand, by Proposition
  \ref{prop:plane-pseudo-fol-top-geo-prop} we know the pseudo-leaf
  $\F_z$ is unbounded. Then, there is a sequence
  $(z_j)_{j\in \N} \subset \F_z$ such that the balls
  $B_{1/n}(z_1),B_{1/n}(z_2),\ldots$ are pairwise disjoints.  Then,
  there is a sequence $(\boldsymbol{p}_j)_{j\in\N}\subset\Z^2$ such
  that $T_{\boldsymbol{p}_j}(z_j) \in [0,1]^2$, for every $j\in
  \N$. Since $\big(U_n(z)\big)_{n\geq 1}$ is $\Z^2_*$-free and
  $B_{1/n}(z_j)\subset U_n(z)$ for every $j\in \N$, we get that the
  balls $\big(T_{\boldsymbol{p}_j}(B_{1/n}(z_j))\big)_{j\geq 1}$ are
  pairwise disjoint, and this is clearly impossible. So, the chain
  $\big(U_n(z)\big)_{n \in \N}$ does not verify condition
  \eqref{eq:chain-almost-free} of Theorem~\ref{thm:free-chain}, for
  every $z\in\R^2$. 
  
  So, let us suppose the chain $\big(U_n(z)\big)_{n\geq 1}$ satisfies
  condition \eqref{eq:chain-one-point-infty} of
  Theorem~\ref{thm:free-chain}, for every $z\in\R^2$. Then, for each
  $z\in\R^2$, let $v_z \in\Ss^1$ denote the only point such that
  $\partial_\infty \F_z=\{v_z\}$. Let $\Omega_z$ be the connected
  component of $\R^2\setminus\F_z$ such that
  $\partial_\infty(\Omega_z)=\{v_z\}$. Notice that $\Omega_z$ is an
  open disk, and since the boundary of $\Omega_z$ is contained in the
  pseudo-leaf $\F_z$, we conclude that $\overline{\Omega_z}$ is simply
  connected, for all $z\in\R^2$.

  Then we define the set
  \begin{displaymath}
    \mathcal{M}:=\left\{w\in\R^2 : \forall z\in\R^2,\
    w\not\in\Omega_z\right\}. 
  \end{displaymath}
  Since every $\Omega_z$ is open, $\mathcal{M}$ is clearly closed and
  might be empty.

  Let us define set
  \begin{displaymath}
    \Xi:=\R^2\setminus\left(\bigcup_{w\in\mathcal{M}}
      \overline{\Omega_w}\right) 
  \end{displaymath}
  Observe that if $w$ and $z$ are two elements of $\mathcal{M}$ such
  that $\F_w\neq\F_z$, then it holds
  $\overline{\Omega_z}\cap\overline{\Omega_w}=\emptyset$. So, since
  $\R^2$ cannot be partitioned into countably many (and more than two)
  closed sets, this implies that $\Xi$ is non-empty; and noticing
  $\overline{\Omega_z}$ is simply-connected for every
  $z\in\mathcal{M}$, we can conclude that $\Xi$ is connected. On the
  other hand, since $\mathcal{M}$ is $\Z^2$-invariant, so is $\Xi$. 

  Hence, $\Xi$ is a non-empty open connected $\Z^2$-invariant set and we
  shall consider the following relation on it: given $w_1,w_2\in\Xi$,
  we define
  \begin{displaymath}
    w_1\sim w_2 \iff \exists z\in\R^2,\ w_1,w_2\in\Omega_z.
  \end{displaymath}
  Let us show $\sim$ is an equivalent relation. First notice it is
  clearly reflexive and symmetric. To prove that is transitive too, it
  is enough to observe that given two arbitrary points
  $z_1,z_2\in\R^2$ such that
  $\overline{\Omega_{z_1}}\cap\overline{\Omega_{z_2}}\neq\emptyset$,
  then either $\Omega_{z_1}\subset\Omega_{z_2}$, or
  $\Omega_{z_2}\subset\Omega_{z_1}$. On the other hand, since
  $\Omega_z$ is open for every $z\in\R^2$, we conclude that $\sim$ is
  an open equivalent relation (\ie every equivalent class is
  open). But we had already shown that $\Xi$ is connected, so this
  implies that there is just one equivalent class. Thus, taking any
  point $w\in\Xi$ and any $\boldsymbol{p}\in\Z^2_*$, we get that
  $w\sim T_{\boldsymbol{p}}(w) \sim T_{\boldsymbol{p}^\perp}(w)$. So,
  there should exist a point $z\in\R^2$ such that $\Omega_z$ contains
  the points $w$, $T_{\boldsymbol{p}}(w)$ and
  $T_{\boldsymbol{p}^\perp}(w)$. This clearly contradicts the fact that
  $\partial_\infty\Omega_z$ is a singleton.

  Therefore, it is not possible that condition
  \eqref{eq:chain-one-point-infty} holds for every
  $\big(U_n(z)\big)_{n\geq 1}$. So, there exists $z_0\in\R^2$,
  $v\in\Ss^1$ and $r>0$ such that $\F_{z_0}\subset\A_{r/2}^v$ and it
  separates the connected components of the boundary of the strip
  $\A_{r/2}^v$. Since, the pseudo-foliation $\F$ is $\Z^2$-invariant,
  this clearly implies that condition \eqref{eq:pseudo-leaf-in-strip}
  holds, as desired.
\end{proof}

Let us relate the existence
of pseudo-foliations with the boundedness of rotational deviations:

\begin{theorem}
  \label{thm:inv-pseudo-fol-implies-bounded-dev}
  If $f\in\Homeo0(\T^2)$ leaves invariant a torus pseudo-foliation
  $\F$ and $v^\perp\in\Ss^1$ is an asymptotic direction of (the lift of)
  $\F$, $f$ exhibits uniformly bounded $v$-deviations.

  In particular the rotation set $\rho(f)$ has empty interior.
\end{theorem}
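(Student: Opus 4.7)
My plan is as follows. Let $\tilde f \in \Thomeo(\T^2)$ be a lift of $f$ and $\tilde\F$ the $\Z^2$-invariant lift of $\F$; the invariance of $\F$ translates into $\tilde f(\tilde\F_z) = \tilde\F_{\tilde f(z)}$ for every $z \in \R^2$, so $\tilde f$ induces an action $\tilde f_*$ on the set of pseudo-leaves. Applying Theorem~\ref{thm:pseudo-fol-top-geo-prop}, I fix $r > 0$ and $v \in \Ss^1$ (so that $v^\perp$ is the asymptotic direction) with $\tilde\F_z \subset T_z(\A^v_r)$ for every $z$, together with a distinguished pseudo-leaf $\ell_0 := \tilde\F_w$ separating the boundary components of $T_w(\A^v_r)$. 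Writing $\psi(z) := \scprod{z}{v}$, I define the leaf-height function
\begin{displaymath}
  \phi(z) := \inf\bigl\{\psi(u) : u \in \tilde\F_z\bigr\},
\end{displaymath}
which is constant on each pseudo-leaf and satisfies $\psi(z) - r \leq \phi(z) \leq \psi(z)$ by the strip inclusion. The $\Z^2$-invariance of $\tilde\F$ gives $\phi(z + \boldsymbol p) = \phi(z) + \scprod{\boldsymbol p}{v}$ for every $\boldsymbol p \in \Z^2$, and combined with $\tilde f \circ T_{\boldsymbol p} = T_{\boldsymbol p} \circ \tilde f$ this forces $\phi(\tilde f^n(z)) - \phi(z)$ to depend only on the class of $z$ modulo $\Z^2$. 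Hence the statement reduces to producing $\alpha \in \R$ and $M > 0$ with $|\phi(\tilde f^n(u)) - \phi(u) - n\alpha| \leq M$ for every $n \in \Z$ and every $u$ in a fixed compact fundamental domain.

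For this I fix $\boldsymbol e \in \Z^2$ with $\scprod{\boldsymbol e}{v} > 2r$ (possible since $\{\scprod{\boldsymbol p}{v} : \boldsymbol p \in \Z^2\}$ is unbounded in $\R$) and set $\ell_k := T_{k\boldsymbol e}(\ell_0)$. By $\Z^2$-invariance each $\ell_k$ inherits the separating property of $\ell_0$ in its strip $T_{w + k\boldsymbol e}(\A^v_r)$, and the choice of $\boldsymbol e$ makes those strips pairwise disjoint. Writing $L^+(\ell_k)$ for the connected component of $\R^2 \setminus \ell_k$ containing $\bb{H}^v_T$ for $T$ sufficiently large and $L^-(\ell_k)$ for its complement in $\R^2 \setminus \ell_k$, I obtain the strictly increasing family $\ldots < \ell_{-1} < \ell_0 < \ell_1 < \ldots$ (meaning $\ell_k \subset L^-(\ell_{k+1})$) together with $\phi(\ell_k) = \phi(\ell_0) + k\scprod{\boldsymbol e}{v}$. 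Connectedness of an arbitrary pseudo-leaf $\ell$ and the bound on its $v$-extent force $\ell \subset L^+(\ell_k) \cap L^-(\ell_{k+1})$ for a unique integer $a(\ell)$, and it follows that $|\phi(\ell) - \phi(\ell_{a(\ell)})| \leq \scprod{\boldsymbol e}{v} + 2r$. Because $\tilde f - \mathrm{id}_{\R^2}$ is bounded, $\tilde f(\bb{H}^v_T) \subset \bb{H}^v_{T - \|\Delta_{\tilde f}\|_\infty}$, and this forces $\tilde f_*$ to preserve (rather than swap) the $L^\pm$ components; combined with $\tilde f_* \circ T_{\boldsymbol e} = T_{\boldsymbol e} \circ \tilde f_*$ and setting $a_n := a(\tilde f_*^n(\ell_0))$, applying $\tilde f_*^m$ to the sandwich $\ell_{a_n} \leq \tilde f_*^n(\ell_0) < \ell_{a_n+1}$ then yields the key estimate
\begin{displaymath}
  a_{n+m} \in \{a_n + a_m,\ a_n + a_m + 1\}, \quad \forall n, m \in \Z.
\end{displaymath}

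A standard Fekete-type argument applied to $a_n + 1$ (and combined with $a_n + a_{-n} \in \{-1, 0\}$ to handle negative indices) produces $\alpha' := \lim_{n\to\infty} a_n/n \in \R$ with $|a_n - n\alpha'| \leq 2$ for every $n \in \Z$. Setting $\alpha := \alpha'\scprod{\boldsymbol e}{v}$ gives $|\phi(\tilde f^n(w)) - \phi(w) - n\alpha| \leq 3\scprod{\boldsymbol e}{v} + 2r$, and running the same sandwich argument on $\tilde f_*^n(\ell_u)$ for arbitrary $u$ in the fundamental domain (whose integer $a(\ell_u)$ is uniformly bounded by compactness) gives the uniform estimate $|\phi(\tilde f^n(u)) - \phi(u) - n\alpha| \leq M$; combined with $|\phi - \psi| \leq r$, this yields the desired uniformly bounded $v$-deviations, and the inclusion $\rho(\tilde f) \subset \ell_\alpha^v$ (hence empty interior of $\rho(f)$) follows immediately from~\eqref{eq:rot-set-def}. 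The main technical obstacle will be the order-theoretic bookkeeping of the second paragraph --- verifying that every pseudo-leaf sits in exactly one slab $L^+(\ell_k) \cap L^-(\ell_{k+1})$ and that $\tilde f_*$ preserves (rather than reverses) the leaf order --- both of which reduce to the observation that for $|k|$ large enough the strip of $\ell_k$ lies entirely in the far-$(\pm v)$ half-plane relative to any fixed pseudo-leaf, so that connectedness forces the target pseudo-leaf into the prescribed component.
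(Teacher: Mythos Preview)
Your argument is correct and takes a genuinely different route from the paper's proof. Both proofs begin the same way --- invoking Theorem~\ref{thm:pseudo-fol-top-geo-prop} to obtain the separating pseudo-leaf $\ell_0=\tilde\F_w$ and building a ladder of integer translates --- but then diverge. The paper, after using the strip structure to conclude $\rho(\tilde f)\subset\ell_\alpha^v$ for some $\alpha$, appeals to the $\tilde\rho$-centralized skew-product machinery of \S\ref{sec:induced-skew-prod}: it finds a single point $z_0\in S$ lying in some fibered stable set $\Lambda_{-r'}^{-v}(0)$ (hence with bounded $v$-deviations along its orbit), and then propagates this bound to all points via the uniform strip estimate $\tilde f^n(\overline S)\subset T_{(m_n,0)}(\A^v_{2r+2})$. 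Your approach is more self-contained: the quasi-morphism inequality $a_{n+m}\in\{a_n+a_m,\,a_n+a_m+1\}$ combined with the Fekete argument produces $\alpha$ and the uniform bound simultaneously, without ever touching the $\Lambda$-sets. This has the advantage of being entirely elementary and giving explicit constants; the paper's approach has the advantage of tying the result into the framework used throughout the rest of the article.

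Two small remarks. First, the compactness argument you mention for bounding $a(\ell_u)$ on a fundamental domain is unnecessary: once the sandwich $\ell_{a_n}\leq\tilde f_*^n(\ell_0)<\ell_{a_n+1}$ and the commutation with $T_{\boldsymbol e}$ are in hand, the same sandwich applied with $\ell_u$ in place of $\ell_0$ yields a bound independent of $a(\ell_u)$ (this is just the $\Z^2$-periodicity of $\phi\circ\tilde f^n-\phi$ in disguise). Second, the order-preservation step implicitly uses that the $\tilde f^m$-images of $\ell_0$ (not just $\ell_0$ itself) separate the far-$(\pm v)$ half-planes; this is fine, since $\ell_0$ does and the property is preserved under any homeomorphism with bounded displacement --- but it is worth making explicit, because Theorem~\ref{thm:pseudo-fol-top-geo-prop} only guarantees the separation property for one leaf, and it is precisely this leaf whose orbit you are tracking.
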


\begin{proof}
  This theorem follows from the combination of
  Theorem~\ref{thm:pseudo-fol-top-geo-prop} and the argument used to
  prove Proposition 4.2 of \cite{KocKorFoliations}. 
  
  Let $\tilde f\in\Thomeo(\T^2)$ be a lift of $f$, and $\tilde\F$ be
  the lift of an $f$-invariant torus pseudo-foliation $\F$. So,
  $\tilde\F$ is $\tilde f$-invariant. Let $v\in\Ss^1$ such that
  $v^\perp$ determines the asymptotic direction of $\tilde \F$. Without
  loss of generality we can assume that $v\in\Ss^1$ is not vertical,
  \ie $\pr{1}(v)\neq 0$.

  Let $r$ be a positive real number given by
  Theorem~\ref{thm:pseudo-fol-top-geo-prop} and let $\tilde \F_w$ be a
  pseudo-leaf of $\tilde \F$ such that $\tilde\F_w\subset T_w(\A_r^v)$
  separates the boundary components of $T_w(\A_r^v)$.

  Since $v^\perp\neq (1,0)$, we know that
  $T_{(1,0)}(\tilde\F_w)\cap\tilde\F_w=\emptyset$. So, we can consider
  the strip
  \begin{displaymath}
    S:=\cc\Big(\R^2\setminus\tilde\F_w,
    T_{(1,0)}\big(\tilde\F_w\big)\Big) \cap \cc\Big(\R^2\setminus
    T_{(1,0)}\big(\tilde\F_w\big), \tilde\F_w\Big).
  \end{displaymath}

  Note that $\overline{S}\subset T_w(\A_{r+1}^v)$ and
  \begin{equation}
    \label{eq:S-int-trans-cover-R2}
    \bigcup_{n \in \Z} T_{(n,0)}(\overline{S})=\R^2.
  \end{equation}

  This implies that for each $n \in \Z$, there exists $m_n\in \Z$ such
  that
  \begin{displaymath}
    \tilde f^n\big(\tilde\F_w\big) \subset T_{(m_n,0)}(\overline{S}),
  \end{displaymath}
  and consequently,
  \begin{displaymath}
    \tilde f^n\Big(T_{(1,0)}\big(\tilde\F_w\big)\Big) =
    T_{(1,0)}\Big(\tilde f^n\big(\tilde\F_w\big)\Big) \subset
    T_{(m_n+1,0)}(\overline{S}). 
  \end{displaymath}
  Hence, we conclude that
  \begin{equation}
    \label{eq:orbit-of-S-v-deviations}
    \tilde f^n(\overline{S})\subset T_{(m_n,0)}\big(\overline{S}\cup
    T_{(1,0)}(\overline{S})\big)\subset T_{(m_n,0)}(\A_{2r+2}^v),
    \quad\forall n\in\Z. 
  \end{equation}

    This fact implies that $\tilde f$ satisfies condition
  \eqref{eq:rot-set-in-line}, \ie, there exists $\alpha\in\R$ such
  that
  \begin{displaymath}
    \rho(\tilde f)\subset\ell_\alpha^v.
  \end{displaymath}

  So we can define the $\tilde{\rho}$-centralized skew-product and the
  $\pm v$-stable sets at infinity as in
  \S~\ref{sec:induced-skew-prod}. By
  Theorem~\ref{thm:translates-of-Lambdas-dense}, there exists $r'>0$
  such that $\Lambda_{-r'}^{-v}(0)\cap S\neq\emptyset$. Let $z_0$ be
  any point in $\Lambda_{-r'}^{-v}(0)\cap S$ and $z$ be an arbitrary
  point of $\R^2$. By \eqref{eq:S-int-trans-cover-R2}, there exists
  $m\in\Z$ such that $T_{(m,0)}(z)\in S$.

  Since $z_0\in\Lambda_{-r'}^{-v}(0)$, it holds
  \begin{equation}
    \label{eq:z0-in-Lambda-r-v}
    \scprod{\tilde f^n(z_0)-z_0}{v} - n\alpha\leq r',\quad\forall
    n\in\Z. 
  \end{equation}
  So putting together \eqref{eq:orbit-of-S-v-deviations} and
  \eqref{eq:z0-in-Lambda-r-v}, we conclude that
  \begin{displaymath}
    \begin{split}
      \scprod{\tilde f^n(z)-z}{v} - n\alpha &= \scprod{\tilde f^n\circ
        T_{(m,0)}(z) - T_{(m,0)}(z)}{v} - n\alpha \\
      &\leq \scprod{\tilde f^n(z_0)-z_0}{v} - n\alpha + 2r + 2 \leq
      2r+ 2 +r',
    \end{split}   
  \end{displaymath}
  and hence, every point exhibits uniformly bounded $v$-deviations.
\end{proof}

The following result is a partial reciprocal of
Theorem~\ref{thm:inv-pseudo-fol-implies-bounded-dev}:

\begin{theorem}
  \label{thm:pseudo-fol-vs-bounded-rot-dev}
  Let $f\in\Homeo0(\T^2)$ be an area-preserving non-annular periodic
  point free homeomorphism, $\tilde f\colon\R^2\carr$ a lift of $f$,
  $\alpha\in\R$, $v\in\Ss^1$ and $M>0$ such that
  \begin{displaymath}
    \abs{\scprod{\tilde f^n(z)-z}{v} - n\alpha}\leq M, \quad\forall
    z\in\R^2. 
  \end{displaymath}

  Then, there exists an $f$-invariant torus pseudo-foliation with
  asymptotic direction equal to $v^\perp$.
\end{theorem}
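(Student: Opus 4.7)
The plan is to construct an $\tilde f$-invariant, $\Z^2$-invariant plane pseudo-foliation $\tilde\F$ with asymptotic direction $v^\perp$; it then descends to the desired torus pseudo-foliation. The atoms of $\tilde\F$ will be the connected components of the level sets of a height function built from the stable sets at infinity of the $\tilde\rho$-centralized skew-product.

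Fix $\tilde\rho\in\rho(\tilde f)$ with $\scprod{\tilde\rho}{v}=\alpha$. By Corollary~\ref{cor:unif-bound-dev-iff-Hm-in-Lambda}, the hypothesis yields
\[
\bb{H}_{r+M}^v\subset\Lambda_r^v\big(\tilde f,0\big)\subset\overline{\bb{H}_r^v},\qquad\forall r\in\R.
\]
Define $\phi\colon\R^2\to\R$ by $\phi(z):=\sup\bigl\{r\in\R:z\in\Lambda_r^v(\tilde f,0)\bigr\}$. The sandwich above gives $\lvert\phi(z)-\scprod{z}{v}\rvert\leq M$, so $\phi$ is finite and proper, and upper semi-continuity of $\phi$ is immediate because the family $\{\Lambda_r^v(\tilde f,0)\}_{r}$ consists of closed sets monotone in $r$ (Proposition~\ref{pro:Lambda-equivariant-properties}). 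From the same proposition one reads off the equivariance $\phi(T_{\boldsymbol p}z)=\phi(z)+\scprod{\boldsymbol p}{v}$ for every $\boldsymbol p\in\Z^2$; and since $\tilde f\bigl(\Lambda_r^v(\tilde f,0)\bigr)=\Lambda_{r+\alpha}^v(\tilde f,0)$ (exactly as in the $\tilde f$-invariance step of the proof of Theorem~\ref{thm:translates-of-Lambdas-dense}), one also gets $\phi(\tilde f z)=\phi(z)+\alpha$.

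Let $\tilde\F$ be the partition of $\R^2$ whose atoms are the connected components of the level sets $\phi^{-1}(c)$, $c\in\R$. Both $T_{\boldsymbol p}$ and $\tilde f$ permute these atoms by equivariance, so $\tilde\F$ is $\tilde f$- and $\Z^2$-invariant; moreover every atom is contained in the strip $\{z:\scprod{z}{v}\in[c,c+M]\}$. Once $\tilde\F$ is shown to be a plane pseudo-foliation, $\Z^2$-equivariance produces a well-defined partition $\F$ of $\T^2$ whose lift is $\tilde\F$, and Theorem~\ref{thm:pseudo-fol-top-geo-prop} identifies its asymptotic direction as $v^\perp$.

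The main obstacle is checking that each atom of $\tilde\F$ is closed, has empty interior, and separates $\R^2$ into exactly two connected components. Closedness follows from upper semi-continuity of $\phi$ applied to $\phi^{-1}(c)=\Lambda_c^v(\tilde f,0)\cap\bigcap_{s>c}\bigl(\R^2\setminus\Lambda_s^v(\tilde f,0)\bigr)$ and a standard topological argument. The delicate point is empty interior: if an atom contained an open set $U$, then $\phi$ would be locally constant on $U$, and combining $\tilde f$-equivariance of $\phi$ with area preservation (Poincaré recurrence) together with the density statement of Theorem~\ref{thm:translates-of-Lambdas-dense} would produce either an $f$-invariant essential open annulus, contradicting the non-annular hypothesis, or a compact invariant topological disk, excluded by Brouwer's translation theorem (Theorem~\ref{thm:Brouwer}) together with the periodic point free assumption. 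For separation, each level set $\phi^{-1}(c)$ lies in a strip of width at most $M$ whose complement contains the two open half-planes $\bb{H}_{c+M}^v$ and $\R^2\setminus\overline{\bb{H}_c^v}$; empty interior forces each connected component to span the full strip in the $v^\perp$-direction, and combined with Proposition~\ref{pro:ppf-nonwand-every-point-ess} and $\Z^2$-equivariance this yields the required two-component separation, completing the verification that $\tilde\F$ is a pseudo-foliation with asymptotic direction $v^\perp$.
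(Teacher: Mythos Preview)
Your construction of $\phi(z)=\sup\{r:z\in\Lambda_r^v(\tilde f,0)\}$ is natural and the equivariance properties are correct, but the argument has a genuine gap at the step you label ``a standard topological argument''. Upper semi-continuity of $\phi$ gives only that $\{\phi\geq c\}=\Lambda_c^v(0)$ is closed; for $\phi^{-1}(c)$ to be closed you would also need $\{\phi>c\}=\bigcup_{s>c}\Lambda_s^v(0)$ to be open, and nothing in Proposition~\ref{pro:Lambda-equivariant-properties} yields this. Your displayed identity $\phi^{-1}(c)=\Lambda_c^v(0)\cap\bigcap_{s>c}(\R^2\setminus\Lambda_s^v(0))$ exhibits the level set as an intersection of a closed set with a $G_\delta$, not as a closed set. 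This is exactly where the substantive work lies: the paper replaces $\Lambda_r^v(0)$ by the open set $U_r:=\cc\bigl(\mathrm{int}(\Lambda_r^v(0)),\bb{H}_{r+M}^v\bigr)$, sets $C_r:=\partial U_r$, and then proves the key lemma that the family $\{C_r\}_{r\in\R}$ is \emph{pairwise disjoint}. That disjointness argument is nontrivial: it proceeds by contradiction, showing that an intersection $C_{s_0}\cap C_{s_1}\neq\emptyset$ would force the connected components of $U_{r_0}\setminus\overline{U_{r_1}}$ to have uniformly bounded diameter along the full $\tilde f$-orbit, and then uses strictly toral dynamics (Proposition~\ref{pro:stricly-toral-dynamics}) to deduce bounded deviations in \emph{every} direction. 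Only after this does one get a continuous height function $H(z)=\sup\{r:z\in U_r\}$ whose level sets are closed and connected.

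This also explains why the paper begins with a case split that you omit. If $f$ is an area-preserving irrational pseudo-rotation with bounded deviations in every direction, the disjointness argument above yields no contradiction, and the pseudo-foliation is obtained instead from J\"ager's semi-conjugacy to a rigid rotation. Likewise the rational-slope case is deferred to J\"ager--Tal. Without isolating these cases your empty-interior and separation sketches cannot be completed as stated: for instance, your empty-interior argument implicitly needs $T_{\boldsymbol p}\bigl(\phi^{-1}(c)\bigr)\cap\phi^{-1}(c)=\emptyset$ for $\boldsymbol p\neq 0$, which holds only when $v$ has irrational slope. In short, the missing ingredient is precisely the $C_r$-disjointness lemma (and the accompanying case analysis that makes it applicable); once you have it, the rest of your outline aligns with the paper's proof.
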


As it was already shown in \cite[\S4]{JaegerTalIrratRotFact}, the
hypothesis is non-annularity is essential to guaranty the existence of
the invariant pseudo-foliation, but the area-preserving assumption
might be relaxed.

\begin{proof}[Proof of
  Theorem~\ref{thm:pseudo-fol-vs-bounded-rot-dev}]

  First observe that if $f$ is an area-preserving irrational
  pseudo-rotation with uniformly bounded rotational deviations in
  every direction, then J\"ager showed \cite[Theorem
  C]{JaegerLinearConsTorus} that $f$ is a topological extension of
  totally irrational torus rotation. In such a case, the pre-image by
  the semi-conjugacy on any linear torus foliation will yield an
  $f$-invariant pseudo-foliation.

  So, we can assume $f$ is not a pseudo-rotation with uniformly
  bounded rotational deviations in every direction.

  If $v$ has rational slope, taking into account $f$ is non-annular we
  can conclude $\alpha$ is an irrational number and this case is
  essentially considered in Theorem 3.1 of
  \cite{JaegerTalIrratRotFact}. In fact, under these hypotheses it can
  be easily proved that the the family of circloids constructed there,
  which are nothing but the fibers of the factor map over the
  irrational circle rotation, is an $f$-invariant pseudo-foliation.

  So, from now on let us assume $v$ has irrational slope. Let
  $\tilde f\colon\R^2\carr$ be a lift of $f$ and choose an arbitrary
  point $\tilde\rho\in\rho(\tilde f)$. Then we consider the induced
  $\tilde\rho$-centralized skew-product
  $F\colon\T^2\times\R^2\carr$. For each $r\in\R$ and $t\in\T^2$,
  consider the $(r,v)$-fibered stable set at infinity $\Lambda_r^v(t)$
  given by \eqref{eq:Lambda-hat-r-v-definition}.

  For simplicity, let us fix $t=0$. By Corollary
  \ref{cor:unif-bound-dev-iff-Hm-in-Lambda}, there is $M>0$ such that
  $\bb{H}_{r+M}^v \subset \Lambda_r^v(0)$, for all $r \in \R$. Then we
  can define
  
  \begin{displaymath}
    \begin{split}
      U_r&:=\cc\Big(\mathrm{int}\big(\Lambda_r^v(0)\big),
      \bb{H}_{r+M}^v \Big), \\
      C_r&:= \partial U_r,\quad\forall r\in\R,
    \end{split}
  \end{displaymath}
  where $\partial(\cdot)$ denotes the boundary operator in
  $\R^2$. Observe that
  $\overline{U_r}=U_r\cup C_r\subset\Lambda_r^v(0)$, for any
  $r\in\R$. Hence,
  $T_{\boldsymbol{p}}(U_r) = U_{r+\scprod{\boldsymbol{p}}{v}}$ and
  consequently,
  $T_{\boldsymbol{p}}(C_r) = C_{r+\scprod{\boldsymbol{p}}{v}}$, too,
  for every $\boldsymbol{p}\in\Z^2$ and any $r$.
  
  Then we claim the sets $C_r$ are pairwise disjoint. To prove this,
  reasoning by contradiction, let us suppose this is not the case and
  so there exist $s_0<s_1$ such that
  $C_{s_0}\cap C_{s_1}\neq\emptyset$. By monotonicity of the family
  $\{\Lambda_r^v(0) : r\in\R\}$, we get that
  \begin{equation}
    \label{eq:Cs0-s1-interm-intersec}
    \emptyset\neq C_{s_0}\cap C_{s_1}\subset \big(C_{s_0}\cap C_r\big)\cap
    \big(C_{s_1}\cap C_r\big),\quad\forall r\in (s_0,s_1).
  \end{equation}

  Let $r_0$ and $r_1$ be any pair of real numbers such that
  $s_0<r_0<r_1<s_1$ and consider the set
  \begin{displaymath}
    L:=\left\{\boldsymbol{p}\in\Z^2 : \frac{s_0-r_0}{2}<
      \scprod{\boldsymbol{p}}{v}<\frac{s_1-r_1}{2}\right\}.
  \end{displaymath}

  Notice that $L$ has \emph{bounded gaps in both coordinates}, \ie
  there exists $N\in\N$ such that for every $m\in\Z$ it holds
  \begin{equation}
    \label{eq:L-bounded-gaps}
      \{\pr{i}(\boldsymbol{p}) : \boldsymbol{p}\in L\}\cap\{n\in\Z :
      \abs{m-n}\leq N\}\neq\emptyset, \quad\text{for } i\in\{1,2\}. 
  \end{equation}
 
  On the other hand, consider the open set
  $\Omega:=U_{r_0}\setminus \overline{U_{r_1}}$, and observe that
  \begin{equation}
    \label{eq:Tp-Omega-in-s0-s1}
    T_{\boldsymbol{p}}(\Omega)\subset
    U_{s_0}\setminus\overline{U_{s_1}},\quad\forall \boldsymbol{p}\in
    L. 
  \end{equation}

  Putting together \eqref{eq:Cs0-s1-interm-intersec},
  \eqref{eq:L-bounded-gaps} and \eqref{eq:Tp-Omega-in-s0-s1} we can
  see that the diameter of the connected components of $\Omega$ must
  be uniformly bounded, \ie there exists a real number $D>0$ such that
  \begin{equation}
    \label{eq:Omega-cc-diam-unif-bound}
    \mathrm{diam}(\Omega')\leq D,\quad\forall
    \Omega'\in\pi_0(\Omega). 
  \end{equation}

  On the other hand, we know that
  \begin{equation}
    \label{eq:Cr-equivariance}
    \tilde f^n(C_r)=C_{r+n\alpha},\quad\text{and}\quad
    T_{\boldsymbol{p}}(C_r)=C_{r+\scprod{\boldsymbol{p}}{v}},
    \quad\forall n\in\Z,\ \forall r\in\R.
  \end{equation}
  So, putting together \eqref{eq:Omega-cc-diam-unif-bound} and
  \eqref{eq:Cr-equivariance} we can conclude that the set
  \begin{displaymath}
    \left\{n\in\Z : \exists \boldsymbol{p}\in\Z^2,\
      T_{\boldsymbol{p}}\big(\tilde f^n(\Omega)\big)\in
      U_{s_0}\setminus\overline{U_{s_1}}\right\} 
  \end{displaymath}
  has bounded gaps, and consequently, there exists $D'>0$ such that
  \begin{equation}
    \label{eq:Omega-cc-orbit-diam-unif-bound}
   \mathrm{diam}\left(\tilde f^n(\Omega')\right)\leq D',\quad\forall
   n\in\Z. 
  \end{equation}

  Then, putting together the fact that $f$ is strictly toral and
  property \eqref{eq:Omega-cc-orbit-diam-unif-bound}, we immediately
  conclude that $f$ exhibits uniformly bounded rotational deviations
  in every direction, contradicting our original assumption.

  So, we have shown that the open set $\Omega$ separates the closed
  sets $C_{s_0}$ and $C_{s_1}$, and then they do not intersect.

  Then we define the function $H\colon\R^2\to\R$ by
  \begin{displaymath}
    H(z):=\sup\left\{r\in\R : z\in U_r\right\}, \quad\forall
    z\in\R^2. 
  \end{displaymath}
  Since we have shown that the family $\{C_r:r\in\R\}$ is pairwise
  disjoint, it easily follows that the function $H$ is continuous. By
  \eqref{eq:Cr-equivariance}, it follows that 
  \begin{displaymath}
    H\big(\tilde f^n(z)\big) = H(z) + n\alpha, \quad\forall n\in\Z,\
    \forall z\in\R^2,
  \end{displaymath}
  so the partition given by the $H$-level sets is $f$-invariant. On
  the other hand, by the topological properties of the set $U_r$ and
  $C_r$, it clearly follows that each $H$-level set is connected and
  disconnects the plane in two connected components.
  
  So, order to show that the level sets of $H$ determines a
  pseudo-foliation, it just remains to show that, for each $r\in\R$,
  the set $H^{-1}(r)$ has empty interior. To do that, let us suppose
  this is not the case. Thus there exists $r$ such that $H^{-1}(r)$
  has non-empty interior in $\R^2$. Let $W$ be a connected component
  of the interior of $H^{-1}(r)$. Since $H^{-1}(r)$ separates the
  plane in exactly two connected components, we conclude that $W$ is
  an open topological disc. And since the covering map
  $\pi\colon\R^2\to\T^2$ is an open map, $\pi(W)$ will be an open
  itself.

  Then, taking into account $f$ is non-wandering, there exists
  $n_0\geq 1$ such that
  \begin{equation}
    \label{eq:non-empty-int-pseudoleaf-return}
    f^{n_0}(\pi(W))\cap \pi(W)\neq\emptyset.
  \end{equation}
  That means there exists $\boldsymbol{q}\in\Z^2$ such that
  \begin{equation}
    \label{eq:n0-returning-to-W}
    \tilde f^{n_0}\big(W)\big)\cap
    T_{\boldsymbol{q}}(W)\neq\emptyset.  
  \end{equation}
  Since the partition in level sets of $H$ is $\tilde f$-invariant,
  this implies
  $\tilde f^{n_0}\big(H^{-1}(r)\big)=
  T_{\boldsymbol{q}}\big(H^{-1}(r)\big)$, and taking into account $W$
  is a connected component of the interior of $H^{-1}(r)$, we
  conclude that $\tilde f^{n_0}(W)=T_{\boldsymbol{q}}(W)$. So,
  \eqref{eq:n0-returning-to-W} can be improved: in fact, it holds
  $f^{n_0}(W)=W$. On the other hand, since $v$ has irrational slope,
  we know that
  \begin{displaymath}
    T_{\boldsymbol{p}}\big(H^{-1}(r)\big)\cap
    H^{-1}(r)=\emptyset,\quad\forall
    \boldsymbol{p}\in\Z^2\setminus\{0\}. 
  \end{displaymath}
  This implies $\pi(W)$ is an open disc in $\T^2$, and since $f$ is
  non-wandering, by Theorem~\ref{thm:Brouwer} $f^{n_0}$ should have a
  fixed point on $\pi(W)$, contradicting the fact that $f$ is periodic
  point free.
\end{proof}

\bibliographystyle{amsalpha} \bibliography{base-biblio}

\end{document}